\documentclass[11pt]{amsart}

\usepackage{graphicx,amssymb,amsmath,xcolor,enumerate}
\usepackage{hyperref}
\hypersetup{
	bookmarks=true,         %show bookmarks bar?
	pdffitwindow=false,     % window fit to page when opened
	pdfstartview={FitH},    % fits the width of the page to the window
	colorlinks=true,      %false: boxed links; true: colored links
	citecolor=blue,
}
\usepackage{dsfont}
\usepackage{bbm}
\usepackage{enumitem}

\usepackage{geometry}
\DeclareSymbolFont{largesymbol}{OMX}{yhex}{m}{n}
\DeclareMathAccent{\Widehat}{\mathord}{largesymbol}{"62}

\newcommand{\R}{\mathbb{R}}

\numberwithin{equation}{section}              % numerazionedelleequazioni
\newtheorem{theorem}{Theorem}[section]

\newtheorem{lemma}{Lemma}[section]
\newtheorem{proposition}{Proposition}[section]
\newtheorem*{proposition*}{Proposition}

\newtheorem*{corollary*}{Corollary}
\newtheorem{definition}{Definition}[section]
\newtheorem*{definitions*}{Definitions}

\newtheorem*{acknowledgements*}{Acknowledgements}

\newtheorem*{conjecture*}{\bf Conjecture}

\newtheorem*{example*}{\bf Example}
\theoremstyle{remark}
\newtheorem{remark}{\bf Remark}[section]

	\date{ }

	\author{Changfeng Gui}
	\address{
		Department of Mathematics, University of Macau, Taipa, Macau and
		Zhuhai UM Science and Technology Research Institute, Hengqin, Guangdong, 519031, China}
	\email{changfenggui@um.edu.mo}
	
	\author{Hao Liu}
	\address{Department of Mathematics, Faculty of Science, University of Macau, Taipa, Macau}
	\email{haoliu@um.edu.mo}
	\author{Chunjing Xie}
	\address{School of Mathematical Sciences, 
		Ministry of Education Key Laboratory of Scientific and Engineering Computing,
		and CMA-Shanghai, Shanghai Jiao Tong University, 800 Dongchuan Road, Shanghai, China}
	\email{cjxie@sjtu.edu.cn}

	\title[Forward DSS solutions to 2D Navier-Stokes equations]{Large forward discretely self-similar solutions to the two-dimensional Navier-Stokes equations with rough data}
	
	\keywords{Navier-Stokes equations,  Self-similar solutions,  Large initial data, Two-dimension}
	\subjclass[2020]{35Q30, 35C06, 35B40, 76D05}
		
\begin{document}

	\begin{abstract}

        We construct forward discretely self-similar (DSS) solutions  of two-dimensional Navier-Stokes equations
        with arbitrarily large data in $L^2_{loc}(\mathbb{R}^2\setminus\{0\})$.
        For such rough initial data, we first establish  a sharp linear estimate under the discrete scaling, which identifies the \(L^2\)-norm of the initial datum on a fundamental annulus with the  Dirichlet energy of its caloric lift over one period in logarithmic time.
         This, together with a structural cancellation between the linear profile and the nonlinear remainder over one period in logarithmic time,  yields a uniform 
 \(\dot{H}^1\)-bound for the remainder, where  the natural $L^2_{loc}$-regularity of the initial data seems to be sharp.
         Since the two-dimensional Leray operator has no \(L^2\)-coercivity, we recover uniform energy control for the remainder by exploiting  \(L^p\)-estimates ($1<p<2)$, together with a key  bootstrap argument and periodicity.
        We then first  construct smooth DSS solutions  using further weighted estimates and a  fixed-point argument  as long as the initial data belong to $C^2_{loc}(\mathbb{R}^2\setminus\{0\})$. These stronger weighted bounds are not uniform under approximation of rough data. Instead, the passage to \(L^2_{\rm loc}\) data relies only on the unweighted energy estimates. Finally, we use strong time continuity of the limiting periodic profile to recover the uniform spatial tightness needed to verify the initial data.
	\end{abstract}
	
	\maketitle

	\section{Introduction and Main Results}

	\subsection{Introduction}

We consider the two-dimensional incompressible Navier-Stokes equations
\begin{equation}\label{eq:NS}
	\left\{
	\begin{aligned}
		&\frac{\partial u}{\partial t}
		-\Delta u
		+u\cdot\nabla u
		+\nabla \mathfrak{p}=0,\\
		&\nabla\cdot u=0,
	\end{aligned}
	\right.
	\quad
	\text{in }\mathbb R^2\times(0,\infty),
\end{equation}
with initial condition
\begin{equation}\label{eq:initial}
	u(x,0)=u_0(x).
\end{equation}
Here
\(
	u:\mathbb R^2\times[0,\infty)\to\mathbb R^2
\)
denotes the velocity field and
\(
	\mathfrak{p}:\mathbb R^2\times[0,\infty)\to\mathbb R
\)
the pressure.

For sufficiently regular solutions with finite-energy initial data $u_0\in L^2(\mathbb{R}^n)$,
the Navier-Stokes equations formally satisfy the energy identity
\begin{equation}\label{eq:enerest}
	\int_{\mathbb R^n}|u(x,t)|^2\,dx
	+
	2\int_0^t\int_{\mathbb R^n}
	|\nabla u(x,s)|^2\,dx\,ds
	=
	\int_{\mathbb R^n}|u_0(x)|^2\,dx.
\end{equation}
Starting from this basic a priori estimate, Leray \cite{Leray34} and
Hopf \cite{Hopf51} constructed global weak solutions satisfying the corresponding
energy inequality for finite-energy
initial data in dimensions two and three. In two dimensions such weak solutions are globally
regular, whereas in three dimensions only partial regularity is known \cite{CKN};
whether smooth solutions can develop singularities in finite time
remains one of the central open problems in the theory of the
Navier-Stokes equations.

An important distinction between the two- and three-dimensional
problems can already be seen from the scaling of the equations.
Indeed, \eqref{eq:NS}-\eqref{eq:initial} are invariant under the scaling
\begin{equation*}
	\begin{aligned}
		u(x,t)
		&\longmapsto
		u_\lambda(x,t)
		=
		\lambda u(\lambda x,\lambda^2t),\\
		\mathfrak{p}(x,t)
		&\longmapsto
		\mathfrak{p}_\lambda(x,t)
		=
		\lambda^2\mathfrak{p}(\lambda x,\lambda^2t),\\
		u_0(x)
		&\longmapsto
		u_{0,\lambda}(x)
		=
		\lambda u_0(\lambda x).
	\end{aligned}
\end{equation*}
If the left-hand side of \eqref{eq:enerest} is denoted by \(E(u)\),
then
\begin{equation*}
	E(u_\lambda)=\lambda^{2-n}E(u).
\end{equation*}
Thus the energy is scale invariant precisely in dimension two.

In this paper, we consider a class of solutions lying  exactly at the critical level and 
outside the classical finite-energy theory of the two-dimensional Navier-Stokes equations. In view of the scaling, a solution
\((u,\mathfrak{p})\) is called \emph{self-similar} or \emph{scale-invariant} if for all  $\lambda > 1$, 
\begin{equation}\label{cond:ss}
u_{\lambda} (x,t) = u(x,t), \quad \mathfrak{p}_{\lambda}(x,t)=\mathfrak{p}(x,t).
\end{equation}
	Correspondingly, we say that the initial condition $u_0$ is self-similar or scale-invariant
	if it satisfies $\lambda u_0(\lambda x)= u_0(x)$ for all $\lambda > 0$.  This is precisely to say $ u_0$  is $(-1)$-homogeneous. In two dimensions, a $(-1)$-homogeneous vector field is not square-integrable  at the origin and at infinity, hence, has infinite energy.  

On the other hand, in many physical problems, solutions satisfy the scale-invariance \eqref{cond:ss} for a particular $\lambda>1$ rather than for all $\lambda$.
 A solution $(u,\mathfrak{p})$ is called discretely self-similar  (DSS) 
	if  
	\begin{equation}\label{eq:defdss1}
		u_{\lambda} (x,t) = u(x,t), \quad \mathfrak{p} _{\lambda}(x,t)=\mathfrak{p}(x,t)\quad \textrm {for some }  \lambda > 1.
	\end{equation}
	Likewise, an initial datum $u_0$ is called DSS if   
	\begin{equation}\label{eq:defdss}
		u_{0,\lambda}( x) =  u_0(x)
       \quad \textrm{for some }  \lambda > 1.
	\end{equation}
In this case, we say that $u$, respectively
$u_0$, is $\lambda$-DSS. It is obvious that  \eqref{eq:defdss1} and \eqref{eq:defdss} also hold for all scaling factors $\lambda^k$, $k\in \mathbb{Z}$, whenever it is $\lambda$-DSS. 
Discrete scale invariance
appears in nonlinear systems for which invariance persists only along
a discrete sequence of scales, and it is typically reflected by
periodicity in logarithmic variables; see, for instance,
\cite{Sornette98}.  We mention two such examples of discrete self-similarity in  thin films of viscous fluid \cite{dallaston2018discrete} and viscous cavity \cite{fontelos2021discrete}.

In two dimensions, the infinite-energy character of self-similar initial data becomes more clear in the broader DSS setting. We  notice that 
	the value of a $\lambda$-DSS vector field $u_0(x)$ in $\R^2\setminus\{0\}$  is uniquely determined by its values in the annulus 
		$\mathcal{A} = \{x \in \R^2 : 1 \leq |x| <\lambda\}$.
And we have
\begin{equation*}
    \int_{\lambda^{k}\mathcal{A}} |u_0|^2 dx =  \int_{\mathcal{A}}|u_0(\lambda^k y)|^2 \lambda^{2k} dy = \int_{\mathcal{A}}|u_0(y)|^2 dy,
\end{equation*}
where we have used $  \lambda^k u_0(\lambda^k y) = u_0(y)$. Hence,  each  member of the disjoint annulus $\lambda^{k}\mathcal{A}$ ($k\in \mathbb{Z}$)
carries the same amount of kinetic energy. Thus for the discretely self-similar vector field, the natural quantity is not the total energy, which is  infinite, but the kinetic energy contained in the fundamental annulus $\mathcal{A}$. We denote
\[
A:=\|u_0\|_{L^2(\mathcal A)}.
\]

\subsection{Previous results}
For sufficiently small initial data, existence of self-similar and discretely
self-similar solutions follow from the well-posedness theory in
scale-critical spaces; see, for example,
\cite{Barraza96,Cannone96,Giga89,Koch01}. The large-data problem is
substantially different, since the nonlinear term can no longer be
treated perturbatively.

  In three dimensions,   the first   large data existence result of forward self-similar solutions was obtained in the seminal work of Jia and {\v{S}}ver{\'{a}}k \cite{Jia14}.  
Large discretely self-similar solutions
were subsequently constructed by Tsai \cite{Tsai2014} and Bradshaw and Tsai
	\cite{Tsai17, Bradshaw17}. The three-dimensional case was further
extended to rough initial data and half-space, see   \cite{Korobkov2016, Xue15, Chae18AIHP, Bradshaw19apde, Bradshaw18arma, Albritton19arma}.

    The two-dimensional problem has a different critical feature. A
nontrivial scale-invariant velocity has order
\(|x|^{-1}\), and therefore does not have locally finite-energy 
around the origin (see also the discussion above for the discretely self-similar data). 
There is also an exact loss of coercivity in similarity variables.
Indeed, if one works directly with the Leray equations in self-similar variables (see \eqref{eq:Leray} below), 
 self-similar solutions correspond to stationary solutions. In energy estimates for Leray equations,
the zeroth-order term and the  drift cancel in the $L^2$-energy identity, that is
\begin{equation}\label{eq:L2criticalityintro}
	\int_{\mathbb R^2}
	\left(
	-\frac12V-\frac12y\cdot\nabla V
	\right)\cdot V\,dy
	=0.
\end{equation}   
    Consequently, the natural energy method controls only the $\dot{H}^1$ norm, which does not even prevent growth at spatial infinity in two dimensions, in contrast to the three-dimensional case \cite{Korobkov2016,Tsai17, Bradshaw17}.
	Only very recently,
	the existence of large self-similar solutions to the two-dimensional Navier-Stokes equations with locally H\"{o}lder continuous initial data has been proved  in \cite{albritton2026forward} and \cite{gui2026forward}   by different approaches. 
 
DSS solutions are periodic solutions in
the similarity variables. Thus the DSS problem is genuinely
time-dependent  rather than stationary.
     The first objective of this paper is to show
    existence of discretely self-similar solutions to 2D Navier-Stokes equations for arbitrary scaling factor $\lambda$.
    The second main goal of this paper is to extend the existence result to rough initial data in $L^2_{\mathrm{loc}}(\mathbb{R}^2\setminus\{0\})$, or equivalently $L^{2,\infty}(\mathbb{R}^2)$, which is the natural  space for discretely self-similar initial data, and can be  viewed as a good counterpart for  $L^2(\mathbb{R}^2)$ initial data.
   Indeed, we also find that the  $L^2_{\mathrm{loc}}$ regularity is not merely a technical assumption; it is the sharp regularity level at which the linear heat profile has finite Dirichlet energy over one logarithmic period.

		\subsection{Main results}
	Before stating our result, we give the definition of the solution to \eqref{eq:NS}, which is essentially the same as \cite[Definition 8.18]{Tsai18}. 
	\begin{definition}\label{def:energy persol}
		A vector field $u$ defined on $\mathbb{R}^2\times[0,\infty)$ is called an energy perturbed solution with divergence-free initial data $u_0\in L^{2,\infty}(\mathbb{R}^2) $ 
		if it satisfies  \eqref{eq:NS} in the sense of distributions, and \begin{itemize}
			\item[(i)]  $u- e^{t\Delta}u_0 \in L^{\infty}(0,T;L^2(\mathbb{R}^2))\cap L^{2}(\epsilon,T;H^1(\mathbb{R}^2))$ for any $\epsilon, T>0$,
			\item[(ii)]  Initial condition: $\lim_{t\to 0} \|u(t)- e^{t\Delta}u_0 \|_{L^2_{\mathrm{loc}}(\mathbb{R}^2\setminus\{0\})} = 0$,
		\end{itemize}
		where \begin{equation*}
			(e^{t\Delta}u_0) (x) = \int_{\mathbb{R}^2} \frac{1}{4\pi t}e^{-\frac{|x-z|^2}{4 t}}u_0(z)dz.
		\end{equation*}
		is the solution to the heat equation with the same initial data $u_0$.
	\end{definition}
	
	The main results in this paper are as follows.
	\begin{theorem}\label{thm:main}
		Let $u_0\in L^{2}_{loc}(\R^2\setminus\{0\})$ be a divergence-free, $\lambda$-DSS vector field  in $\R^2 \setminus \{0\}$, such that
		 \begin{equation}\label{eq:zeroflux}
			  \int_{ \mathbb{S}^1} u_0\cdot n d\sigma = 0 .
		\end{equation}
		Then	there exists a discretely  self-similar solution $u (x,t)$ to the Cauchy problem \eqref{eq:NS}-\eqref{eq:initial}, 
		which is  an energy perturbed solution in the sense of Definition \ref{def:energy persol}.  Moreover,
		for any $1<p<\infty $, we have
			\begin{equation*}
			\left\| u(x,t)-  e^{t\Delta}u_0\right \|_{L^p(\mathbb{R}^2)}\leq C(A,\lambda,p)t^{-\frac{1}{2}+
			\frac{1}{p}},
		\end{equation*}
where  
		$A=\|u_0\|_{L^{2}(\mathcal{A})}<+\infty$ with  $\mathcal{A} = B_\lambda \setminus B_1=\{x \in \R^2 : 1 \leq |x| < \lambda\}$.
	\end{theorem}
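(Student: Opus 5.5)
We work in similarity variables $y=x/\sqrt{t}$, $s=\log t$, and write $u(x,t)=t^{-1/2}v(y,s)$. With this change, a $\lambda$-DSS solution corresponds to a solution $v$ of the time-dependent Leray system
\[
\partial_s v-\Delta v-\tfrac12 v-\tfrac12 y\cdot\nabla v+v\cdot\nabla v+\nabla q=0,\qquad \nabla\cdot v=0,
\]
which is $T$-periodic in $s$, with $T=2\log\lambda$. Let $V_0(y,s)=t^{1/2}(e^{t\Delta}u_0)(\sqrt t\,y)$ denote the caloric lift; it is itself $T$-periodic and solves the linear part. Writing $v=V_0+w$, the remainder $w$ solves the perturbed Leray system
\[
\partial_s w-\Delta w-\tfrac12 w-\tfrac12 y\cdot\nabla w+(V_0+w)\cdot\nabla(V_0+w)+\nabla q=0 .
\]
We look for a $T$-periodic $w$ in $L^\infty(0,T;L^2)\cap L^2(0,T;H^1)$. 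The plan is to do this first for data $u_0\in C^2_{loc}(\mathbb R^2\setminus\{0\})$ and then pass to $L^2_{loc}$ data by approximation.

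\textbf{Step 1 (linear estimates).} We prove a sharp identity or bound of the form
\[
\int_0^T\!\!\int_{\mathbb R^2}|\nabla V_0|^2\,dy\,ds\simeq \|u_0\|_{L^2(\mathcal A)}^2=A^2 .
\]
After undoing the similarity change, this reads $\int_1^{\lambda^2}\!\int t\,|\nabla e^{t\Delta}u_0|^2\,dx\,\frac{dt}{t}$, and by DSS it is a sum over dyadic-in-$\lambda$ pieces of the standard heat-energy identity. We also derive the decay and $L^p$ bounds for $V_0$ for $p>2$, in particular $\|V_0(s)\|_{L^4}\lesssim A$. The zero-flux condition \eqref{eq:zeroflux} enters here: it lets us write $V_0$, up to a harmless part, as $\nabla^\perp$ of a stream function with good decay, so that the forcing $V_0\cdot\nabla V_0$ lies in a space dual to $\dot H^1$. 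Without this, a $\log$-type flux term would obstruct that.

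\textbf{Step 2 (a priori estimates for periodic $w$).} Testing the equation with $w$, the identity \eqref{eq:L2criticalityintro} kills the zeroth-order and drift terms. The term $\int (w\cdot\nabla w)\cdot w$ vanishes, and $\int (V_0\cdot\nabla w)\cdot w=0$ as well. The terms that remain are $\int (w\cdot\nabla V_0)\cdot w$ and the forcing $\int (V_0\cdot\nabla V_0)\cdot w$. Integrating over one period, the $\frac{d}{ds}\|w\|_2^2$ term cancels by periodicity, which leaves
\[
\int_0^T\|\nabla w\|_2^2\,ds\le \Bigl|\int_0^T\!\!\int (w\cdot\nabla w)\cdot V_0\Bigr|+C(A).
\]
The first term is the dangerous one. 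We intend to handle it with the structural cancellation mentioned in the abstract: splitting $V_0=V_{\rm near}+V_{\rm far}$, the far part is small in $L^\infty$ or $L^{2,\infty}$ and hence absorbable. The near part is compactly supported and smooth, and it produces $\|w\|_{L^2(B_R)}^2$. This localized norm is controlled via a Hardy or Ladyzhenskaya inequality together with an $L^p$ bound for $w$ with $1<p<2$.

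That $L^p$ bound is the key bootstrap and the main obstacle, since $\dot H^1$ alone gives no $L^2$ control in 2D. We obtain it by writing the periodic solution through Duhamel's formula for the Leray semigroup on $L^p$, for which the operator $-\tfrac12-\tfrac12 y\cdot\nabla$ is coercive in $L^p$ when $p<2$ (it has the spectral shift $1/p-1/2>0$). This gives
\[
\|w\|_{L^\infty_sL^p}\lesssim \|(V_0+w)\otimes(V_0+w)\|_{L^1_sL^{r}}.
\]
We combine it with interpolation between $L^p$ and $\dot H^1$ to get $L^2$ control, and close the bootstrap with a continuity argument in a parameter $\sigma\in[0,1]$ multiplying the nonlinearity (Leray–Schauder). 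Periodicity is used to pass from bounds on averages to sup bounds in $s$.

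\textbf{Step 3 (existence and passage to rough data).} For $C^2_{loc}$ data, we add weighted estimates to get compactness in $L^2$. A Galerkin scheme or the Leray–Schauder fixed point then produces a periodic $w$, and hence a smooth DSS solution. For general $u_0\in L^2_{loc}$, we approximate by smooth DSS divergence-free zero-flux data $u_0^k\to u_0$ in $L^2(\mathcal A)$. Only the unweighted bounds $C(A)$ are uniform in $k$, so we use Aubin–Lions on bounded sets and pass to the limit in the weak formulation.

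\textbf{Step 4 (initial condition and the $L^p$ estimate).} To verify the initial condition (ii), we need $\|w(s)\|_{L^2(|y|>R)}$ to be small uniformly as $s\to-\infty$; in original variables this means $t\to0$ on compact subsets of $\mathbb R^2\setminus\{0\}$. This follows from strong continuity of the periodic limit profile $w\in C([0,T];L^2)$: the image of a compact interval is compact, hence tight in $L^2$, and periodicity transports this to all $s$. Finally, the estimate $\|u-e^{t\Delta}u_0\|_{L^p}=t^{-1/2+1/p}\|w(s)\|_{L^p}$ reduces to uniform bounds on $\|w(s)\|_{L^p}$ for every $1<p<\infty$. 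These come from the $L^p$ bounds with $p<2$ together with the $L^\infty_sL^2\cap L^2_s\dot H^1$ bounds, upgraded to $L^\infty_s$ for large $p$ through parabolic smoothing over one period and periodicity.
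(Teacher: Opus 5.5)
Your Steps 1, 3 and 4 are broadly in line with the paper. Step 2, however, has to produce every $A$-uniform bound, and it does not close.

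\textbf{The Dirichlet estimate is not self-contained.} In two dimensions the far-field term $\int(w\cdot\nabla w)\cdot V_{\mathrm{far}}$ cannot be absorbed. Absorbing it needs $\|w\|_{L^2}$ (or $\|w\|_{L^\infty}$), and $\dot H^1(\mathbb R^2)$ controls neither; the 2D Hardy inequality also fails. The near-field term costs $\|w\|_{L^2(B_R)}^2$. Both must be paid for with $M=\sup_s\|w(s)\|_{L^p}$. By scaling, Gagliardo--Nirenberg then only gives $\int_0^T\|\nabla w\|_2^2\,ds\lesssim 1+M^2$.

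\textbf{The $L^p$ bound cannot be closed against that.} In your Duhamel version, $w\otimes w$ must be measured in $L^r$ with $r\le p$, since by translation invariance $e^{\tau\Delta}\mathbb P\nabla\cdot$ maps $L^r$ into $L^p$ on $\mathbb R^2$ only for such $r$. So the quadratic term costs at least one full power of $M$ times a factor in $\nabla w$ that is not small. Moreover, $L^\infty_s$ cannot come from $L^1_s$, because the time kernel behaves like $\theta^{-(1/2+1/r-1/p)}$ near $\theta=0$.

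Even the paper's sharper multiplier estimate does not help here. Testing with $|V_1|^{p-2}V_1$, the transport part drops out and only the pressure survives. This still leaves $M^{p/2}\bigl(\int_0^T\|\nabla w\|_2^2\,ds\bigr)^{(2+p)/4}$ and a term linear in $\int_0^T\|\nabla w\|_2^2\,ds$ on the right. Inserting $1+M^2$ gives $M^p\lesssim 1+M^2+M^{p+1}$, which does not close since $p<2$. The nonlinearity has degree $p+1$ against degree $p$, so the argument needs a Dirichlet bound that does not depend on $M$ at all. A continuity argument in $\sigma$ or Leray--Schauder cannot substitute for this, since both presuppose exactly such a priori bounds for all fixed points.

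\textbf{The missing ingredient is the exact periodic cancellation the abstract refers to; it is not a near/far splitting.} Test the $V$-equation with $V$, the $V$-equation with $U_0$, and the linear equation for $U_0$ with $V$, then integrate over one period.
Using $\int(V\cdot\nabla V)\cdot U_0=-\int(V\cdot\nabla U_0)\cdot V$ and $\int(U_0\cdot\nabla V)\cdot U_0=-\int(U_0\cdot\nabla U_0)\cdot V$, the nonlinear terms cancel identically in the sum. The drift terms cancel after one integration by parts, and the $s$-derivatives integrate to zero by periodicity. This leaves
\[
\int_0^T\|\nabla V\|_2^2\,ds+2\int_0^T\!\!\int_{\mathbb R^2}\nabla U_0:\nabla V\,dy\,ds=0,
\qquad\text{hence}\qquad
\int_0^T\|\nabla V\|_2^2\,ds\le 4\int_0^T\|\nabla U_0\|_2^2\,ds\le C(A,\lambda),
\]
with no $L^p$ input.

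Only with this $M$-independent bound does the multiplier estimate close as $M^p\le C(1+M^{p/2})$. To get there, $U_0$ is first replaced by a cut-off $U_1$ with $\|U_1\|_{L^\infty}+\|\nabla U_1\|_{L^\infty}$ small, so that $\int(V_1\cdot\nabla U_1)\cdot|V_1|^{p-2}V_1$ can be absorbed. The $L^\infty_sL^2$ and $L^\infty_sH^1$ bounds (the latter via the vorticity equation) then follow from a good time slice, Gronwall and periodicity.

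Incidentally, zero flux is not what puts $V_0\cdot\nabla V_0$ in $\dot H^{-1}$; $V_0\in L^4$ already does.
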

	
	There are a few remarks in order. 
	\begin{remark}
		Theorem \ref{thm:main} indeed implies that $u(\cdot,t)\to u_0$ in $L^p(\mathbb{R}^2)$ for
 $1<p<2$, 	and one has for $t\to 0$ that
			\begin{equation*}
			\left\| u(x,t)-  e^{t\Delta}u_0\right \|_{L^p(\mathbb{R}^2)}\leq C t^{-\frac{1}{2}+\frac{1}{p}} \to 0.
		\end{equation*}
			On the other hand, for $p>2$, 	one has the decay estimates as $t\to \infty$, 
				\begin{equation*}
				\left\| u(x,t)-  e^{t\Delta}u_0\right \|_{L^p(\mathbb{R}^2)}\leq C t^{-\frac{1}{2}+\frac{1}{p}} \to 0.
			\end{equation*}
	\end{remark}
	\begin{remark}
		For a  discretely  self-similar  vector field, $u_0\in L^{2}_{loc}(\mathbb{R}^2\setminus\{0\}) $ is equivalent to $u_0\in L^{2,\infty}(\mathbb{R}^2) $. One can refer to \cite[Lemma 3.1]{Tsai17} for a proof, although the proof there is for $\mathbb{R}^3$, it applies to  any dimensions.
	\end{remark}
	\begin{remark}\label{rem:deltaatorigin}
		Any divergence-free vector field  in $\R^2 \setminus \{0\}$ satisfies that
		\begin{equation*}
			\int_{\partial B_r} u_0\cdot n \, d\sigma = \textrm{constant, ~ for any } r>0.
		\end{equation*}
			The condition \eqref{eq:zeroflux} is equivalent to say that $\nabla \cdot u_0 =0$  on $\mathbb{R}^2$ across the origin, which is necessary for the solution $u$ to be divergence-free on $\mathbb{R}^2$ for all time $t>0$. On the other hand,
the vector field
		\begin{equation*}
			u_0(x) = \frac{1}{2\pi}\frac{x}{|x|^2}
		\end{equation*}
	 satisfies $\nabla \cdot u_0=\delta$ in the sense of distributions on $\mathbb{R}^2$.
	In view of this example, if one do not require the divergence-free condition across the origin for initial data, it is natural to seek self-similar solutions that maintain the  singularity $\frac{x}{|x|^2}$ for all time. However, analyzing such solutions is a  quiet different  problem due to the persistence of the strong singularity.
	\end{remark}

	\subsection{Main difficulties and the key ideas of the proof}\label{sec:Main difficulties}
In this subsection, we describe the key ideas for proving the main result, where the discretely self-similarity and the low-regularity of the initial data brings new difficulties. The proofs
rely on three distinct ingredients: a linear Dirichlet energy estimate 
 for the heat flow under the discrete scaling, a periodic nonlinear energy bootstrap argument, and a
rough-data 
approximation.

\textbf{The  \(L^2\) threshold and the  heat flow for DSS initial data.}
Let \(f\) satisfy
\[
f(\lambda x)=\lambda^{-1}f(x).
\]
A first key point is the two-sided estimate
\begin{equation}\label{eq:linearKeyIntro}
	C(\lambda)^{-1}
	\|f\|_{L^2(\mathcal A)}
	\le
	\|\nabla e^\Delta f\|_{L^2(\mathbb R^2)}
	\le
	C(\lambda)
	\|f\|_{L^2(\mathcal A)}.
\end{equation}
Thus the \(L^2\)-energy of the datum on the fundamental annulus $\mathcal A$ is
equivalent to the  Dirichlet energy of its caloric lift at any positive time.
Writing
\[
f(r,\theta)=r^{-1}g(\log r,\theta),
\]
the function \(g\) is periodic in the logarithmic radial variable and in $\theta$.
A Fourier expansion in \((\log r,\theta)\), combined with the
Mellin transform of Bessel functions, helps us to express the  Dirichlet
energy into a weighted norm on one logarithmic  period.
Comparing this norm  with the \(L^2\)-norm of the initial datum on the fundamental annulus gives
\eqref{eq:linearKeyIntro}. In particular, the  \(L^2\)-regularity
 is precisely the
regularity level at which  the Dirichlet energy of the  heat flow can  be finite.

\textbf{Periodic cancellation, subquadratic coercivity, and
	 bootstrap.}
We now introduce the self-similar variables and the Leray equations. 
Let
\begin{equation}\label{eq:changeofvaribales}
	y=\frac{x}{\sqrt{t}} ~\textrm{and}~ s= \log  t.
\end{equation}
We  define   $U(y,s)$  by
\begin{equation}\label{eq:changeofvareq}
	u(x,t)=\frac{1}{\sqrt{t}}U(y,s) = \frac{1}{\sqrt{t}}U\left(\frac{x}{\sqrt{t}}, \log  t\right).
\end{equation}
It follows that if $u$ solves \eqref{eq:NS}, then $U$ satisfies the time-dependent \emph{Leray equations}:
\begin{equation}\label{eq:Leray}
	\left\{
	\begin{aligned}
		&\frac{\partial U}{\partial s}-\Delta U -\frac{1}{2}U -\frac{1}{2}y\cdot \nabla U+ U\cdot \nabla U +\nabla \Pi=0,\\
		&\nabla\cdot U=0,
	\end{aligned}
	\right. \textrm{ in } \mathbb{R}^2\times (-\infty,\infty).
\end{equation}
The discretely self-similarity of $u$ implies that $U$ is periodic in $s$, i.e.,
$U(y, s+T)=U(y,s)$ for 
$T = 2  \log \lambda$.  
The initial condition \eqref{eq:initial} is transformed to the spatial condition at infinity, see Section \ref{sec:intialdata}. 

We define the profile $ U_0(y,s)$ by
\begin{equation}\label{eq:defofv0}
	U_0(y,s)=  \sqrt{t}(e^{t\Delta}u_0)(x), ~	y=\frac{x}{\sqrt{t}} ~\textrm{and}~ s= \log  t.
\end{equation}
It is easy to see that $U_0(y,s)$ is also periodic in $s$ with period $T = 2 \log \lambda$. 
Furthermore, \eqref{eq:linearKeyIntro} and and the estimates for the heat equation imply
\begin{equation}\label{eq:U0DirIntro}
	\sup_{0\le s\le T}
	\|\nabla U_0(s)\|_{L^2(\mathbb R^2)}
	\le C(A,\lambda).
\end{equation}
Direct calculations show that $U_0$ satisfies
\begin{equation}\label{eq:equforheateq}
	\frac{\partial U_0}{\partial s}	-\Delta  U_0 -\frac{1}{2} U_0 -\frac{1}{2}y\cdot \nabla  U_0 = 0, ~\nabla \cdot U_0=0.
\end{equation}
Let $	V$ be the remainder part, that is
\begin{equation}\label{eq:deforre}
	V(y,s)=U(y,s)- U_0(y,s).
\end{equation}
Using \eqref{eq:Leray} and \eqref{eq:equforheateq}, one sees that $	V$ satisfies
\begin{equation}\label{eq:difference}
	\left\{
	\begin{aligned}
		&\frac{\partial V}{\partial s}-\Delta V -\frac{1}{2}V -\frac{1}{2}y\cdot \nabla V+  U_0\cdot \nabla V + V\cdot \nabla  U_0 + V\cdot \nabla V +\nabla \Pi=- U_0\cdot\nabla  U_0, \\
		&  \nabla \cdot V=0,
	\end{aligned} 
	\right. \textrm{ in } \mathbb{R}^2.
\end{equation}

	The first  structural observation
	is that  using the \emph{periodicity}  of the solutions in $s$, there is a \emph{cancellation relation} between $ U_0$ and $V$,
which leads to a
	crucial identity over one period (see also \eqref{eq:crucialidentity} below)
	\begin{equation*}
		\int_{0}^{T} \int_{\mathbb{R}^2} |\nabla V|^2 \,dy \, ds + 		2\int_{0}^{T} \int_{\mathbb{R}^2} \nabla  U_0\cdot \nabla V \,dy \, ds =0.
	\end{equation*}
    This identity has been used in  \cite{gui2026forward}, where the solutions to the Leray equations are independent of $s$. 
	Applying H\"older's inequality to the above identity together with \eqref{eq:linearKeyIntro} yields
	\begin{equation}\label{eq:periodid}
		\|\nabla V\|_{L^2(0,T;L^{2}(\mathbb{R}^2))}\leq 4\|\nabla U_0\|_{L^2(0,T;L^{2}(\mathbb{R}^2))}\leq C(A,\lambda).
	\end{equation}
    The point here is that, unlike the  self-similar problem, \eqref{eq:periodid} is intrinsically a
period-averaged identity. It provides no uniform control of \(V(s)\) at a fixed
 time. Moreover, this quadratic energy identity cannot
recover the  \(L^2\)-norm, and is insufficient to control the behaviour at spatial infinity of the solutions.

To overcome this difficulty, we
use the multiplier $|V|^{p-2}V$ ($1<p<2$) to test \eqref{eq:difference} so that
	the convection term $-\frac{1}{2}y\cdot \nabla V$ can be utilized to suppress
	the drift  term $-\frac{1}{2}V$: 
	\begin{equation}\label{eq:corce}
		\int_{0}^{T} \int_{\mathbb{R}^2} \left(-\frac{1}{2}V -\frac{1}{2}y\cdot \nabla V\right)\cdot |V|^{p-2}V \,dy \, ds = 		\int_{0}^{T} 
		\int_{\mathbb{R}^2}\left(\frac{1}{p}-\frac{1}{2}\right) |V|^{p} \,dy \, ds. 
	\end{equation}
    However, due to the vector field $|V|^{p-2}V$ ($1<p<2$) is not divergence free, the pressure term will not disappear, and one has to estiamte
    \begin{equation*}
        \int_{0}^{T} \int_{\mathbb{R}^2} (\nabla \Pi) \cdot |V|^{p-2} V \, dy
    \end{equation*}
The difficulty is that a direct estimate does not close with the
space-time bounds available at this stage. Indeed, by expressing the
pressure through the Riesz transforms and using the
Calder\'on-Zygmund estimates, the most challenging term to control is $V\cdot\nabla V$. By the
Gagliardo-Nirenberg inequality, one has
\[
    \|V\nabla V\|_{L^p}
    \leq
    \|\nabla V\|_{L^2}
    \|V\|_{L^{\frac{2p}{2-p}}}
    \leq
    C
    \|\nabla V\|_{L^2}^{1+\frac p2}
    \|V\|_{L^p}^{1-\frac p2}.
\]
Consequently, after pairing with $|V|^{p-2}V$, one has
\begin{equation}\label{eq:pressure-critical-term}
    \int_0^T
    \|V(s)\|_{L^p}^{\frac p2}
    \|\nabla V(s)\|_{L^2}^{1+\frac p2}\,ds .
\end{equation}
At this point we only know
\[
    \int_0^T\|\nabla V(s)\|_{L^2}^2\,ds\leq C,
\]
and hence \eqref{eq:pressure-critical-term} cannot be controlled
directly by the coercive space-time term
$\int_0^T\|V(s)\|_{L^p}^p\,ds$. Indeed, an application of Young's
inequality in $s$ that can absorb the $L^p$-factor of $V$ would require a power of
$\|\nabla V\|_{L^2}$ strictly larger than two, which is not available.

This is precisely why we introduce a bootstrap quantity
\[
    M:=\sup_{0\leq s\leq T}\|V(s)\|_{L^p}.
\]
Using this quantity $M$, we have
\[
   \int_0^T
    \|V(s)\|_{L^p}^{\frac p2}
    \|\nabla V(s)\|_{L^2}^{1+\frac p2}\,ds
    \leq
    C M^{p/2}
    \left(
        \int_0^T\|\nabla V(s)\|_{L^2}^2\,ds
    \right)^{\frac{2+p}{4}}
    \leq C M^{p/2}.
\]
The bootstrap argument  is then closed  as follows: the above first enables us to obtain  a
space-time bound, which provides a bound at a good time slice $s_0$. Using the periodicity and the differential
inequality in $s$,    similar  control can be propagated over one full period. This gives
\begin{equation}\label{eq:bootstrapIntro}
	M^p
	\le
	C(A,\lambda,p)\bigl(1+M^{p/2}\bigr),
\end{equation}
which yields a bound for $M$ since $p/2<p$.
   This  bootstrap  is one of
the key argument needed for the time-dependent  problem.  
   
Combining the resulting
\(L^\infty_sL^p_y\)-bound with
\eqref{eq:periodid} and interpolation gives the full energy
estimate 
	\begin{equation}\label{eq:energyest}
		\sup_{0\le s\le T}\|V(s)\|_{L^2(\mathbb R^2)}
		+
		\|V\|_{L^2(0,T;H^1(\mathbb R^2))}
		\le C(A,\lambda).
	\end{equation}
 We then further establish $L^\infty_{s}H^1_y$-bound of $V$ by studying the 
vorticity equation.
	 One can refer to Section \ref{sec:energy estimates} for  the detailed proof.

\textbf{Construction of the solution.}
	To construct the solution, we try to use the Leray-Schauder degree argument. The key   is to establish enough a priori estimates for the possbile fixed points and compactness of Leray-Schauder map. Since the domain is unbounded, the compactness needs extra control at spatial infinity. One way to do this is to use weighted energy estimates with a proper growing weight so that one can control the tail. However, when using a growing weight, the linear part $U_0$ and its derivative lack a sufficient decay due to the low regularity of the initial data, and  the estimates cannot be closed. Our idea is to consider
	$C^2_{loc}$ initial data first,  by then  $U_0$ and its derivative has stronger pointwise decay. 
    We use this additional information to derive
weighted energy estimates for the remainder, which prevent loss of mass at spatial infinity.
These estimates, together with  bounds for the periodic Stokes operator in proper weighted spaces enables us to
construct a solution by using the Leray–Schauder fixed point theorem.

	Once we construct solutions with  $C^2_{loc}$ initial data, we  approximate the $L^2_{loc}$ initial data by $C^2_{loc}$ ones. 
	The  point here is that the estimate \eqref{eq:energyest} only depends on $L^2_{loc}$ regularity of the initial data. These bounds together with the Aubin–Lions lemma then yield
strong  convergence, which is sufficient to pass to the nonlinear
term to obtain a solution. 

There remains a subtle issue concerning the initial data. In physical
variables, for a compact set
\(K\Subset\mathbb R^2\setminus\{0\}\),
\[
\|u(t)-e^{t\Delta}u_0\|_{L^2(K)}^2
=
\int_{K/\sqrt t}|V(y,\log t)|^2\,dy.
\]
As \(t\downarrow0\), the set \(K/\sqrt t\) escapes to spatial infinity.
Thus recovering the initial condition is equivalent to obtaining
uniform spatial tightness of the periodic profile. Such tightness
cannot be inherited from the  weighted estimates for smooth data, since those
estimates are not uniform along the approximation.

Instead, after passing to the limit, using Lions-Magenes lemma (see \cite[Chapter III, Section 1, Lemma 1.2]{Temam79}), we prove
$
V\in C(\mathbb T_T;L^2(\mathbb R^2))$.
Hence
$\{V(s):0\le s\le T\}$
	is a compact subset of \(L^2(\mathbb R^2)\), and therefore
\begin{equation*}
	\lim_{R\to\infty}
	\sup_{0\le s\le T}
	\int_{|y|>R}|V(y,s)|^2\,dy
	=0.
\end{equation*}
This recovers the initial condition in Definition \ref{def:energy persol}. In this way the 
limit for the rough-data is obtained entirely from estimates stable at the annular
\(L^2\)-level, without needing a uniform weighted bound for the
approximating sequence.

\subsection{Organization of the paper}	
	The rest of the paper is organized as follows. We prove the energy estimates for the remainder term $V(y,s)=U(y,s)- U_0(y,s)$ in Section \ref{sec:energy estimates}.  The weighted energy estimates  and  the existence of the solution by using the Leary-Schauder fixed point argument were established in Section \ref{sec:passage-smooth-to-L2} when the initial data belong to $C^2_{loc}$.  Finally, in Section \ref{sec:roughdata}, we pass to general
\(L^2_{\mathrm{loc}}\) DSS initial data by a structure-preserving smooth approximation and compactness argument, thereby completing the proof of the main theorem, Theorem \ref{thm:main}.

	\section{The  Energy estimates for the  Leray equations}\label{sec:energy estimates}

	Throughout the rest of this paper, we assume that $u_0\in L^{2}_{loc}(\R^2\setminus\{0\})$ and  denote 
	\begin{equation*}
		\mathcal{A}=B_\lambda\setminus B_1=\{1\leq |x|< \lambda\}, \quad	A=\|u_0\|_{L^{2}(\mathcal{A})}.
	\end{equation*}
	The main objective of this section is to prove the following a priori estimates for $V=U-U_0$.
	\begin{theorem}\label{thm:energyestkey}
		Let $u_0\in L^{2}_{loc}(\R^2\setminus\{0\})$ be a divergence-free, $\lambda$-DSS initial data  such that $A=\|u_0\|_{L^{2}(\mathcal{A})}<+\infty$ and
		\begin{equation*}
			\int_{ \mathbb{S}^1} u_0\cdot n d\sigma = 0 .
		\end{equation*}
		Assume $u$  is a discretely  self-similar solution to \eqref{eq:NS}-\eqref{eq:initial} in the sense of Definition \ref{def:energy persol},
		and that 
		$V$  is a corresponding solution to
		\eqref{eq:difference}. Then one has  
		\begin{equation*}
			\sup_{0\leq s\leq T} \|V(\cdot,s)\|_{H^{1}(\mathbb{R}^2)}  \leq C(A,\lambda),
		\end{equation*}
		where $T=2 \log  \lambda$.
		Moreover,   for any positive constant $p \in(1,\infty)$   there exists a constant  $C=C(A,\lambda, p)$ 
		such that 
		\begin{equation*}
					\sup_{0\leq s\leq T}\|V(\cdot,s)\|_{L^{p}(\mathbb{R}^2)} \leq C(A,\lambda,p).	
		\end{equation*}
	\end{theorem}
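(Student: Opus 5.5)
We follow the strategy outlined in Section \ref{sec:Main difficulties}. \emph{Step 1 (linear bounds).} Using the two-sided estimate \eqref{eq:linearKeyIntro} for the caloric lift of a $(-1)$-homogeneous-in-scale field, which we prove via Fourier expansion in $(\log r,\theta)$ and Mellin transforms of Bessel functions, together with the smoothing of the heat semigroup on $s\in[0,T]$, we get \eqref{eq:U0DirIntro}. We also need the auxiliary bounds $\sup_s\|U_0(s)\|_{L^q}\le C(A,\lambda,q)$ for $2<q\le\infty$ and $\sup_s\|\nabla U_0(s)\|_{L^q}\le C$ for $2\le q\le \infty$. These follow since $U_0(\cdot,s)=e^{\Delta}(e^{s/2}u_0(e^{s/2}\cdot))$ up to rescaling, and $u_0\in L^{2,\infty}$, so Young/O'Neil convolution estimates with Gaussian derivatives apply.

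\emph{Step 2 (periodic cancellation).} We test \eqref{eq:difference} against $V$ and test \eqref{eq:equforheateq} against $V$ (or equivalently the Leray equation for $U$ against $U$, minus the $U_0$ equation against $U_0$). We integrate over one period and use periodicity to kill $\partial_s$ terms, and use \eqref{eq:L2criticalityintro} to kill the zeroth-order/drift terms. The remaining trilinear terms collapse, by antisymmetry $\int (U\cdot\nabla U)\cdot U=0$, and similarly for the cross terms, to the identity $\int_0^T\!\!\int|\nabla V|^2+2\int_0^T\!\!\int\nabla U_0:\nabla V=0$. This gives \eqref{eq:periodid}. Justifying these integrations requires $V\in L^\infty L^2\cap L^2H^1$ per period, which follows from Definition \ref{def:energy persol} after the change of variables.

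\emph{Step 3 ($L^p$ bootstrap, main obstacle).} Fix $p\in(1,2)$ and test \eqref{eq:difference} with $|V|^{p-2}V$. The drift gives the coercive term \eqref{eq:corce}, and the Laplacian gives a nonnegative term. The terms $U_0\cdot\nabla V$ and $V\cdot\nabla V$ vanish against $|V|^{p-2}V$ by divergence-freeness. The term $V\cdot\nabla U_0$ is bounded by $\|\nabla U_0\|_{L^\infty}\|V\|_p^p$, which is troublesome as it is not small. We therefore handle it by interpolation, $\|V\|_p^{p}$ against $\|V\|_{L^{p}}^{p-1}\|V\|_{L^{q}}\|\nabla U_0\|_{L^r}$ with $\|V\|_{L^q}\lesssim\|\nabla V\|_2$ exponents, turning it into $M^{p-1}$ times space–time Dirichlet norms. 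The forcing $U_0\cdot\nabla U_0$ is fine in $L^p$ by Step 1. The pressure is $\Pi=R_iR_j(V_iV_j+2U_{0,i}V_j+U_{0,i}U_{0,j})$. The worst part is handled as in \eqref{eq:pressure-critical-term} with $M=\sup_s\|V(s)\|_p$, giving $CM^{p/2}$. Integrating over a period yields $\int_0^T\|V\|_p^p\le C(1+M^{p/2}+M^{p-1})$, hence a good slice $s_0$ with $\|V(s_0)\|_p^p$ bounded by the same quantity. Integrating the differential inequality $\frac{d}{ds}\|V\|_p^p\le C(\ldots)$ from $s_0$ across one period, using periodicity to reach every $s$, gives \eqref{eq:bootstrapIntro}. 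This closes since all powers of $M$ on the right are $<p$. The a priori finiteness of $M$ needed to run the bootstrap must come from the solution class; we expect to verify it via interpolation of $L^2$ with the space-time decay, or by a truncated argument. We expect this step to be the hardest.

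\emph{Step 4 (upgrades).} Interpolating $L^\infty_sL^p$ with $L^2_sL^{q}$ (from Sobolev) and returning to the $L^2$ energy inequality, now with the $V\cdot\nabla U_0$ term controlled, gives $\sup_s\|V\|_2+\|V\|_{L^2H^1}\le C$. For $H^1$, we take the vorticity $\omega=\operatorname{curl}V$. It satisfies a scalar drift–diffusion equation with $-\omega-\frac12 y\cdot\nabla\omega$ structure and no pressure. We test with $\omega$, choose a good slice from $\int_0^T\|\omega\|_2^2\le C$, and propagate by Gronwall over one period using Ladyzhenskaya's inequality on the nonlinear terms, which gives $\sup_s\|\nabla V\|_2\le C$. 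Finally, $L^p$ for $p\ge2$ follows from Gagliardo–Nirenberg between $L^2$ and $\dot H^1$.
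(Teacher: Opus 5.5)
Your proposal is correct up to some fixable slips, and it departs from the paper in Step 3. Steps 1, 2 and 4 match the paper: the Fourier--Mellin two-sided estimate, the period-averaged identity \eqref{eq:crucialidentity}, an averaged $L^2$ bound followed by a good time slice and Gronwall, and the vorticity equation for the $H^1$ bound.

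In Step 3 the paper does not test the equation for $V$ itself. It replaces $U_0$ by the cut-off profile $U_1=\eta_{R_0}U_0+w$, whose divergence corrector $w$ relies on the zero-flux condition (Lemma \ref{lem:Estimates for modified linear part}, proved in the appendix). This makes $\|U_1\|_{W^{1,\infty}}\le\epsilon$, so the $\nabla U_1$ terms, in the equation and in the pressure, are absorbed pointwise in $s$ into $(\frac1p-\frac12)\|V_1\|_{L^p}^p$. The cost is constructing $w$, proving its decay, and an extra forcing $F(U_0,w)$. You keep $U_0$ and use only $\nabla U_0\in L^r$ for some $2\le r<\infty$. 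With the Gagliardo--Nirenberg correction below, this gives
\[
\int_{\mathbb R^2}|\nabla U_0|\,|V|^p\,dy\le C\|V\|_{L^p}^{p(1-\frac1r)}\|\nabla V\|_{L^2}^{\frac pr}\le \epsilon\|V\|_{L^p}^p+C_\epsilon\|\nabla V\|_{L^2}^{p}.
\]
The last term is integrable over a period because $p<2$ and Step 2 already bounds $\int_0^T\|\nabla V\|_{L^2}^2\,ds$. The pressure contributions of $V\otimes U_0$ and $U_0\otimes U_0$ are handled the same way. So only the $V\otimes V$ part of the pressure genuinely needs the bootstrap in $M$, and the cut-off construction can be dropped. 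This works because the cancellation identity controls the Dirichlet energy independently, so large constants in front of $\|\nabla V\|_{L^2}^{\alpha}$ with $\alpha\le 2$ are harmless. Your route is shorter and more elementary than the paper's.

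Three points need fixing.
\begin{enumerate}
\item In two dimensions there is no inequality $\|V\|_{L^q}\lesssim\|\nabla V\|_{L^2}$ for any $q$; it fails by scaling. In Step 3 and in Step 4 (``from Sobolev'') you must instead use $\|V\|_{L^q}\le C\|V\|_{L^p}^{p/q}\|\nabla V\|_{L^2}^{1-p/q}$.
\begin{itemize}
\item In Step 3 this yields the display above. The power of $M$ becomes $p(1-\frac1r)$ rather than $p-1$, which is still below $p$, so the bootstrap closes.
\item In Step 4 it is exactly the paper's $\|V\|_{L^2}\le C\|V\|_{L^p}^{p/2}\|\nabla V\|_{L^2}^{1-p/2}$.
\end{itemize}
\item Testing \eqref{eq:difference} and \eqref{eq:equforheateq} against $V$ alone does not cancel $\int (V\cdot\nabla U_0)\cdot V+\int (U_0\cdot\nabla U_0)\cdot V$. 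You also need \eqref{eq:difference} tested against $U_0$. Your ``equivalent'' formulation ($U$-equation against $U$ minus $U_0$-equation against $U_0$) expands into exactly these three pairings, which is what the paper uses. That version is the correct one; the first is not equivalent to it.
\item The a priori finiteness of $M$ that you flag is also taken for granted in the paper's bootstrap. You are not missing an argument the paper supplies there.
\end{enumerate}
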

	
	\subsection{Dirichlet energy estimates}\label{sec:outline}
	In this section, we first prove the Dirichlet energy estimates of $V$. We formulate it as a separate Proposition.
	\begin{proposition}\label{lem:Dirichlet energy estimates}
		Under the same assumptions in Theorem \ref{thm:energyestkey}, one has
		\begin{equation*}
			 \|\nabla V\|_{L^2(0,T;L^{2}(\mathbb{R}^2))}  \leq C(A,\lambda).
		\end{equation*}
	\end{proposition}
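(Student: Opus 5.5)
The plan is to derive the period-averaged cancellation identity announced in the introduction and combine it with the linear estimate \eqref{eq:U0DirIntro}.

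\textbf{Energy identity for $U$.} Test the Leray system \eqref{eq:Leray} for $U$ against the profile. Rather than testing \eqref{eq:difference} with $V$ directly, which leaves the bad term $\int (U_0\cdot\nabla U_0)\cdot V$, I would use the structure $U=U_0+V$ as follows.
\begin{itemize}
\item Test \eqref{eq:difference} with $V$. The convection terms $U_0\cdot\nabla V\cdot V$ and $V\cdot\nabla V\cdot V$ vanish, the pressure drops since $\nabla\cdot V=0$, and the drift terms cancel by \eqref{eq:L2criticalityintro}. This gives
\[
\tfrac12\tfrac{d}{ds}\|V\|_{L^2}^2+\|\nabla V\|_{L^2}^2=-\int (V\cdot\nabla U_0)\cdot V-\int (U_0\cdot\nabla U_0)\cdot V .
\]
\item Combine the right-hand side: $-\int (U\cdot\nabla U_0)\cdot V=\int (U\cdot\nabla V)\cdot U_0$.
\item Test \eqref{eq:equforheateq}, or equivalently the full equation for $U$, against $U_0$ and $V$ cross-wise. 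The key observation I want is that testing the equation of $U$ with $U_0$ and the equation of $U_0$ with $U$ produces $\int(U\cdot\nabla U)\cdot U_0=\int (U\cdot\nabla V)\cdot U_0$, because $\int (U\cdot\nabla U_0)\cdot U_0=0$.
\item Subtracting appropriately, the nonlinear terms cancel, the drift terms are antisymmetric up to the $\tfrac12$-terms, which cancel by the same computation as \eqref{eq:L2criticalityintro}, and the time derivatives combine into $\tfrac{d}{ds}\int U_0\cdot V$.
\end{itemize}
The net result should be
\[
\tfrac{d}{ds}\Big(\tfrac12\|V\|_{L^2}^2+\int U_0\cdot V\Big)+\|\nabla V\|_{L^2}^2+2\int\nabla U_0:\nabla V=0 .
\]

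\textbf{Integration over one period.} Integrating over $[0,T]$ and using $T$-periodicity of $U_0$ and $V$, the boundary terms drop, giving \eqref{eq:crucialidentity}. Cauchy--Schwarz then yields $\|\nabla V\|_{L^2_{s,y}}\le 2\|\nabla U_0\|_{L^2_{s,y}}$. Finally, \eqref{eq:U0DirIntro}, which comes from the two-sided estimate \eqref{eq:linearKeyIntro} applied at each time $t\in[1,\lambda^2]$ via scaling, bounds $\|\nabla U_0\|_{L^2(0,T;L^2)}\le \sqrt{T}\,C(A,\lambda)$.

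\textbf{Main obstacle: rigor.} We have $U_0\notin L^2$; it is only $\dot H^1$ and decays like $|y|^{-1}$ in the critical $L^{2,\infty}$ sense. So $\int U_0\cdot V$ and the other pairings must be justified.
\begin{itemize}
\item I would insert a cutoff $\phi_R(y)$ and work with the truncated identity. The error terms carry $\nabla\phi_R\sim R^{-1}$ on $|y|\sim R$, and there is also the drift commutator $\int (y\cdot\nabla\phi_R)\,\cdot$ supported on the annulus.
\item I would show these vanish as $R\to\infty$ using $V\in L^\infty L^2\cap L^2 H^1$ from Definition \ref{def:energy persol}(i) (after rescaling to the similarity variables over one period), together with $U_0\in L^\infty_s L^{2,\infty}\cap L^\infty_s L^q$, $q>2$, uniformly in $s$.
\item The drift commutator is the delicate term, since $y\cdot\nabla\phi_R=O(1)$. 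It pairs $U_0$ with $V$ on an annulus $|y|\sim R$, bounded by $\|U_0\|_{L^{2}(|y|\sim R)}\|V\|_{L^2(|y|>R)}$. Here $\|U_0\|_{L^2(|y|\sim R)}$ is $O(1)$ by the DSS structure, while the $V$-tail tends to $0$. Thus the tail vanishes after time integration by dominated convergence.
\end{itemize}
If needed, I would take the time derivatives in the weak sense, using the regularity of $U_0$ for $t>0$.
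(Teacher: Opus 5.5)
Your proposal is correct and follows essentially the same route as the paper: you test the $V$-equation with $V$ and with $U_0$ and the $U_0$-equation with $V$, sum so that the nonlinear and drift terms cancel, integrate over one period to obtain \eqref{eq:crucialidentity}, and conclude by Cauchy--Schwarz and the linear Dirichlet bound on $U_0$. Your cutoff argument supplies a justification that the paper leaves implicit. In particular, you control the drift commutator $\tfrac12\int (y\cdot\nabla\phi_R)\,U_0\cdot V$ by the uniform bound on $\|U_0\|_{L^2(R<|y|<2R)}$ together with the vanishing $L^2$-tail of $V$.
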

	We begin with two key lemmas considering the Dirichlet energy of the solution to the heat equation with discretely self-similar initial data that belongs to $L^2_{loc}(\mathbb{R}^2)$. 
	
		\begin{lemma}\label{lem:L^2bound}
		Let $f: \mathbb{R}^2 \setminus \{0\} \to \mathbb{R}$ satisfy $f(\lambda x) = \lambda^{-1} f(x)$ for a fixed $\lambda > 1$. Let the annulus $\mathcal{A} = B_\lambda \setminus B_1$. If $f \in L^2(\mathcal{A} )$, then  there exists some  constant $C(\lambda)$, such that
		\begin{equation*}
			\frac{1}{C(\lambda)}	\|f\|_{L^2(\mathcal{A})} \leq \|\nabla(e^{\Delta}f)\|_{L^2(\mathbb{R}^2)} \leq C(\lambda) \|f\|_{L^2(\mathcal{A})}.
		\end{equation*}
	\end{lemma}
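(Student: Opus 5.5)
The plan is to diagonalize the problem in logarithmic polar coordinates and to use the fact that the Fourier transform acts on each log-periodic mode as multiplication by a unimodular constant. That turns both inequalities into a comparison of a weighted $L^2$ norm over one logarithmic period with a flat one.

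\textbf{Step 1: log-periodic Fourier expansion of $f$.} Put $L=\log\lambda$ and write $f(r,\theta)=r^{-1}g(\log r,\theta)$. The hypothesis $f(\lambda x)=\lambda^{-1}f(x)$ means exactly that $g$ is $L$-periodic in $\rho=\log r$ (and $2\pi$-periodic in $\theta$). Since $dx=r\,dr\,d\theta$ and $d\rho=dr/r$,
\begin{equation*}
\|f\|_{L^2(\mathcal A)}^2=\int_0^{L}\!\!\int_0^{2\pi}|g(\rho,\theta)|^2\,d\theta\,d\rho .
\end{equation*}
Expanding $g=\sum_{k,m}c_{km}e^{i\beta_k\rho}e^{im\theta}$ with $\beta_k=2\pi k/L$, we get $f=\sum c_{km}\,r^{-1+i\beta_k}e^{im\theta}$ and $\|f\|_{L^2(\mathcal A)}^2=2\pi L\sum|c_{km}|^2$.

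\textbf{Step 2: Fourier transform of one mode (the core computation).} Each mode is a homogeneous tempered distribution of degree $-1+i\beta$, with $-1+i\beta$ strictly inside the admissible strip since the real part $-1$ lies in $(-2,0)$. By the classical Bochner/Hecke formula, equivalently the Mellin transform of $J_m$,
\begin{equation*}
\widehat{r^{-1+i\beta}e^{im\theta}}=\gamma_m(\beta)\,|\xi|^{-1-i\beta}e^{im\varphi},
\end{equation*}
where
\begin{equation*}
\gamma_m(\beta)=c\,i^{-|m|}\,2^{1+i\beta}\,\frac{\Gamma\bigl(\tfrac{|m|+1+i\beta}{2}\bigr)}{\Gamma\bigl(\tfrac{|m|+1-i\beta}{2}\bigr)} .
\end{equation*}
The two Gamma arguments are complex conjugates, so $|\gamma_m(\beta)|=|c|$ is independent of $k$ and $m$. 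Consequently $\hat f(\xi)=|\xi|^{-1}G(\log|\xi|,\varphi)$, where $G$ has Fourier coefficients $\gamma_m(-\beta_k)$-multiples of $c_{km}$ on the same lattice (indices possibly reflected). Hence $G$ is again $L$-periodic and $\|G\|_{L^2([0,L]\times\mathbb S^1)}=|c|\,\|g\|_{L^2([0,L]\times\mathbb S^1)}$.

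As a consistency check independent of the series, directly from the scaling, $\hat f(\xi/\lambda)=\lambda\hat f(\xi)$, so $\hat f$ is itself $\lambda$-DSS of degree $-1$.

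\textbf{Step 3: Plancherel and folding over periods.} Substituting $\tau=\log|\xi|$,
\begin{equation*}
\|\nabla e^{\Delta}f\|_{L^2}^2=c'\!\int|\xi|^2e^{-2|\xi|^2}|\hat f|^2d\xi
=c'\!\int_{\mathbb R}\!\int_0^{2\pi}e^{2\tau}e^{-2e^{2\tau}}|G(\tau,\varphi)|^2\,d\varphi\,d\tau .
\end{equation*}
Folding the $\tau$-line onto one period via periodicity of $G$ gives the weight
\begin{equation*}
w(\tau)=\sum_{j\in\mathbb Z}e^{2(\tau+jL)}e^{-2e^{2(\tau+jL)}}
\end{equation*}
on $[0,L]$. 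This series converges: it is geometric as $j\to-\infty$ and superexponentially decaying as $j\to+\infty$. The weight is continuous, positive and $L$-periodic, so $0<c_1(\lambda)\le w\le c_2(\lambda)$. Therefore
\begin{equation*}
\|\nabla e^{\Delta}f\|_{L^2}^2\simeq_\lambda\|G\|_{L^2}^2\simeq\|g\|_{L^2}^2=\|f\|_{L^2(\mathcal A)}^2,
\end{equation*}
which gives both inequalities at once.

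\textbf{Main obstacle: rigor of Step 2 and passage to the limit.} The mode-by-mode formula is classical, but $f$ is only in $L^{2,\infty}$, so the series identity for $\hat f$ must be justified. I would first prove the estimate for trigonometric polynomials $g$ (finite sums), where everything is explicit. For general $g\in L^2$, approximate by partial sums $g_N$ and set $f_N=r^{-1}g_N(\log r,\theta)$. Then $f_N\to f$ in $L^{2,\infty}$, hence as tempered distributions, so $\nabla e^{\Delta}f_N\to\nabla e^{\Delta}f$ pointwise, since the heat kernel and its gradient lie in $L^{2,1}$. Meanwhile the two-sided estimate applied to $f_N-f_M$ shows that $\nabla e^{\Delta}f_N$ is Cauchy in $L^2$. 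Identifying the limits yields the estimate for $f$.
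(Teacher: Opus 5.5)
Your proposal is correct and follows the paper's proof essentially step for step. Both arguments expand $g$ in Fourier series in $(\log r,\theta)$, use the Bessel--Mellin formula to get a multiplier of constant modulus on each mode, apply Plancherel, and fold $\tau=\log|\xi|$ onto one period with the weight $\sum_k e^{2(\tau+k\log\lambda)}e^{-2e^{2(\tau+k\log\lambda)}}$, which is bounded above and below. Your last paragraph adds something the paper lacks: you prove the estimate for trigonometric polynomials and pass to the limit via $L^{2,\infty}$ convergence and the Cauchy property in $L^2$, which justifies the termwise Fourier transform that the paper only carries out formally.
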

	
	\begin{remark}
		We note that the regularity assumption in Lemma \ref{lem:L^2bound} is sharp in the sense that 	$\|\nabla(e^{\Delta}f)\|_{L^2(\mathbb{R}^2)}$ may not be finite if regularity of $f$ is lower than $L^2$.
			\end{remark}
	\begin{proof}
		We write $f(x) = \frac{1}{|x|} g(x)$, where $g(\lambda x) = g(x)$. 
		Because $f \in L^2(\mathcal{A} )$, $g$ is square-integrable with respect to the scale-invariant measure $\frac{dr}{r} d\theta$. Moreover,  the function $\tilde{g}(s,\theta): = g(r ,\theta)$ is periodic in $s:= \log  r$ and $\theta$, with periods  $ \log  \lambda$ and $2\pi$, respectively. 
		We can 	then	decompose $	\tilde{g}$ into Fourier series as
		\begin{equation*}
			\tilde{g}(s,\theta) = \sum_{m=-\infty}^\infty \sum_{n=-\infty}^\infty c_{m,n} e^{im\theta} e^{i \frac{2n\pi}{ \log  \lambda} s}.
		\end{equation*}
Hence, one has
		\begin{equation*}
			g(r, \theta) = \sum_{m=-\infty}^\infty \sum_{n=-\infty}^\infty c_{m,n} e^{im\theta} r^{i \frac{2n\pi}{ \log  \lambda}}.
		\end{equation*}
		It follows that
		\begin{equation*}
		f =   \sum_{m=-\infty}^\infty \sum_{n=-\infty}^\infty c_{m,n} r^{-1+i \frac{2n\pi}{ \log  \lambda}} e^{im\theta}.
		\end{equation*}
		Direct computations show that
		\begin{equation}\label{eq:Parseval}
			\begin{aligned}
			\|f\|_{L^2(\mathcal{A})}&=	\int_{1}^{\lambda}\int_{0}^{2\pi}|f(r,\theta)|^2 r drd\theta
				=\int_{1}^{\lambda}\int_{0}^{2\pi}\frac{1}{r}|g(r,\theta)|^2 drd\theta\\
				&=\int_{0}^{ \log \lambda}\int_{0}^{2\pi}|\tilde{g}(s,\theta)|^2 dsd\theta=(2\pi  \log  \lambda) \sum_{m,n=-\infty}^\infty |c_{m,n}|^2.
			\end{aligned}
		\end{equation}
		Denote $f_{m,n } = r^{-1+i \frac{2n\pi}{ \log  \lambda}} e^{im\theta}$.
		We compute the Fourier transform of $f_{m,n}$. Let $\xi$ denote the frequency variable and  $(\rho, \phi)$ be the polar coordinates in frequency space. 
		Direct computations yield that the Fourier transform of $f_{m,n }$ (viewed as tempered distributions) is given by
				\begin{equation*}
						\begin{aligned}
								\hat{f}_{m,n}(\xi) = 	\hat{f}_{m,n}(\rho, \phi) &= \int_0^\infty \int_0^{2\pi}r^{i \frac{2n\pi}{ \log  \lambda}}\frac{1}{r}e^{im\theta}e^{-i\rho r \cos(\theta-\phi)} r \, d\theta \, dr\\
					&=  \int_0^\infty \int_0^{2\pi}  r^{i \frac{2n\pi}{ \log  \lambda}}e^{im\phi}e^{im\alpha} e^{-i\rho r \cos\alpha}  \, d\alpha \, dr\\
								&= 2\pi (-i)^{|m|} e^{im\phi} \int_0^\infty  r^{i \frac{2n\pi}{ \log \lambda}}J_{|m|}(\rho r) \, dr,\\
		&=\rho^{-1-i \frac{2n\pi}{ \log  \lambda}} \left( 2\pi (-i)^{|m|} e^{im\phi} \int_0^\infty s^{i \frac{2n\pi}{ \log  \lambda}} J_{|m|}(s) \, ds \right),
							\end{aligned}
					\end{equation*}
				where $J_{|m|}$ is the Bessel function of the first kind of order $|m|$, and we have used the  identity
				\begin{equation*}
						\int_0^{2\pi} e^{im\alpha} e^{-i\rho r \cos \alpha}  \, d\alpha  = 2\pi (-i)^mJ_m(\rho r) = 2\pi (-i)^{|m|}J_{|m|}(\rho r)  .
					\end{equation*}
	Using the Mellin transform of the Bessel function, one has
		\begin{equation*}
			\int_0^\infty s^{i \frac{2n\pi}{ \log  \lambda}} J_{|m|}(s) \, ds = 2^{i \frac{2n\pi}{ \log  \lambda}} \frac{\Gamma\left(\frac{|m|+1+i \frac{2n\pi}{ \log  \lambda}}{2}\right)}{\Gamma\left(\frac{|m|+1-i \frac{2n\pi}{ \log  \lambda}}{2}\right)}.
		\end{equation*}
		Then the  Fourier transform of $f$ is $\hat{f}(\rho, \phi) = \frac{1}{\rho} h(\rho, \phi)$, where
		\begin{equation}\label{eq:Fouriertran}
			h(\rho, \phi) = \sum_{m=-\infty}^\infty \sum_{n=-\infty}^\infty c_{m,n} M_{m,n} \rho^{-i \frac{2n\pi}{ \log  \lambda}} e^{im\phi},
		\end{equation}
		and 
		\begin{equation*}
			M_{m,n} = 2\pi (-i)^{|m|}2^{i \frac{2n\pi}{ \log  \lambda}} \frac{\Gamma\left(\frac{|m|+1+i \frac{2n\pi}{ \log  \lambda}}{2}\right)}{\Gamma\left(\frac{|m|+1-i \frac{2n\pi}{ \log  \lambda}}{2}\right)}.
		\end{equation*}
		We note that $|M_{m,n}| = 2\pi$.
		We now compute the $L^2(\mathbb{R}^2)$ norm of $\nabla(e^{\Delta}f)$. One has
		\begin{equation*}
			\begin{aligned}
				\|\nabla(e^{\Delta}f)\|_{L^2(\mathbb{R}^2)}^2 
				 & =\frac{1}{(2\pi)^2} \int_{\mathbb{R}^2} |\xi|^2 e^{-2|\xi|^2} |\hat{f}(\xi)|^2 \, d\xi\\
				 &= \frac{1}{(2\pi)^2} \int_0^\infty \int_0^{2\pi} \rho^2 e^{-2\rho^2} \frac{|h(\rho, \phi)|^2}{\rho^2} \rho \, d\phi \, d\rho\\
				 &= \frac{1}{(2\pi)^2} \int_0^\infty \int_0^{2\pi} \rho e^{-2\rho^2} |h(\rho, \phi)|^2  \, d\phi \, d\rho.
			\end{aligned}
		\end{equation*}
	It follows from \eqref{eq:Fouriertran} that for each $\rho$,
		\begin{equation*}
			\int_0^{2\pi} |h(\rho, \phi)|^2 \, d\phi = 2\pi \sum_{m=-\infty}^\infty \left| \sum_{n=-\infty}^\infty c_{m,n} M_{m,n} \rho^{-i \frac{2n\pi}{ \log  \lambda}} \right|^2.
		\end{equation*}
		Let $S_m(\rho) = \sum_{n=-\infty}^\infty c_{m,n} M_{m,n} \rho^{-i \frac{2n\pi}{ \log  \lambda}} $.  It then follows that
		\begin{equation}\label{eq:fourierid}
			\begin{aligned}
			 	\|\nabla(e^{\Delta}f)\|_{L^2(\mathbb{R}^2)}^2 & = \frac{1}{(2\pi)^2}\int_0^\infty \int_0^{2\pi} \rho e^{-2\rho^2} |h(\rho, \phi)|^2  \, d\phi \, d\rho \\& = \frac{1}{2\pi}	\sum_{m=-\infty}^\infty \int_0^\infty \rho e^{-2\rho^2} |S_m(\rho)|^2 \, d\rho \\
			& =\frac{1}{2\pi} \sum_{m=-\infty}^\infty \int_{-\infty}^\infty e^{2\tau} e^{-2e^{2\tau}} |S_m(e^\tau)|^2 \, d\tau.
			\end{aligned}
		\end{equation}
		One  notes that   $S_m(e^\tau)$ is a  Fourier series in $\tau$ with coefficients $c_{m,n} M_{m,n}$, since
		\begin{equation*}
			S_m(e^\tau) =\sum_{n=-\infty}^\infty c_{m,n} M_{m,n} e^{-i \tau\frac{2n\pi}{ \log  \lambda}}.
		\end{equation*}
		Moreover, we have that
		\begin{equation*}
			S_m(e^{\tau+ \log \lambda})  = 			S_m(e^\tau),
		\end{equation*}
		and that
		\begin{equation}\label{eq:pasav}
			\int_{0}^{ \log  \lambda}|S_m(e^{\tau})|^2d\tau =  \log \lambda\sum_{n=-\infty}^\infty |c_{m,n} M_{m,n}|^2 = (2\pi)^2 \log \lambda\sum_{n=-\infty}^\infty |c_{m,n}|^2.
		\end{equation}
Using the periodicity of $S_m$, we then have
 \begin{equation*}
 	\begin{aligned}
 			\int_{-\infty}^\infty  |S_m(e^\tau)|^2 e^{2\tau} e^{-2e^{2\tau}} \, d\tau 
 		&=\sum_{k=-\infty}^{\infty}\int_{k \log  \lambda}^{(k+1) \log  \lambda}\left|	S_m(e^\tau) \right|^2e^{2\tau} e^{-2e^{2\tau}} \, d\tau \\
 		&=\sum_{k=-\infty}^{\infty}\int_{0}^{ \log  \lambda}\left|	S_m(e^\tau) \right|^2e^{2\tau+2k \log  \lambda} e^{-2e^{2\tau+2k \log \lambda}} \, d\tau \\
 		&= \int_{0}^{ \log  \lambda}\left|	S_m(e^\tau) \right|^2\left(\sum_{k=-\infty}^{\infty}e^{2\tau+2k \log  \lambda} e^{-2e^{2\tau+2k \log \lambda}}\right) \, d\tau.
 	\end{aligned}
		\end{equation*}
		If we let $W(\tau) = \sum_{k=-\infty}^{\infty}e^{2\tau+2k \log  \lambda} e^{-2e^{2\tau+2k \log \lambda}}$, then $W$ is periodic with period $ \log  \lambda$ and it is not hard to see that there exists a constant $C(\lambda)>1$ such that
		\begin{equation*}
			\frac{1}{C(\lambda)}\leq W(\tau)\leq C(\lambda).
		\end{equation*}
		Combining the above, one has
		\begin{equation*}
				\frac{1}{C(\lambda)}	\int_{0}^{ \log \lambda}\left|	S_m(e^\tau) \right|^2 d\tau \leq \int_{-\infty}^\infty  |S_m(e^\tau)|^2 e^{2\tau} e^{-2e^{2\tau}} \, d\tau \leq C(\lambda) \int_{0}^{ \log \lambda}\left|	S_m(e^\tau) \right|^2 d\tau
		\end{equation*}
		Summing over  $m$, using \eqref{eq:Parseval}, \eqref{eq:fourierid} and \eqref{eq:pasav} yields
		\begin{equation*}
		\frac{	1}{C(\lambda)} \|f\|_{L^2(\mathcal{A})}^2\leq\|\nabla(e^{\Delta}f)\|_{L^2(\mathbb{R}^2)}^2 \leq  C(\lambda) \|f\|_{L^2(\mathcal{A})}^2.
		\end{equation*}
        This finishes the proof of the lemma.
	\end{proof}

Lemma \ref{lem:L^2bound} directly implies the following estimate.
	\begin{proposition}\label{lem:L2bound on a period}
		Let $f: \mathbb{R}^2 \setminus \{0\} \to \mathbb{R}$ be a discretely self-similar function satisfying $f(\lambda x) = \lambda^{-1} f(x)$ for some $\lambda > 1$. Assume that $f \in L^2(\mathcal{A})$, where $\mathcal{A}=B_\lambda \setminus B_1$. Let $u(x,t) = (e^{t\Delta}f)(x)$ be the corresponding solution to the heat equation. Define the self-similar profile
		\begin{equation*}
			U_0(y,s) = \sqrt{t} u(x,t), \quad \text{where} \quad y = \frac{x}{\sqrt{t}} \quad \text{and} \quad s =  \log  t.
		\end{equation*}
		Then   one has
		\begin{equation*}
	 \frac{1}{C(\lambda)}	\|f\|_{L^2(\mathcal{A})} \leq \sup_{0\leq s\leq T}	\|\nabla_y U_0(\cdot,s)\|_{ L^2( \mathbb{R}^2)} \leq  C(\lambda) \|f\|_{L^2(\mathcal{A})},
		\end{equation*}
		for some constant $C(\lambda)$, where $T=2 \log \lambda$.
	\end{proposition}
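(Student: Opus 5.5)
The plan is to reduce the statement at an arbitrary time $s\in[0,T]$ to Lemma \ref{lem:L^2bound} by a rescaling that keeps the datum discretely self-similar. Fix $s$ and set $t=e^s\in[1,\lambda^2]$. By definition, $U_0(y,s)=\sqrt t\,(e^{t\Delta}f)(\sqrt t\,y)$. I will introduce the rescaled datum $f_t(z):=\sqrt t\, f(\sqrt t\, z)$. The scaling of the heat semigroup gives $e^{\Delta}f_t(y)=\sqrt t\,(e^{t\Delta}f)(\sqrt t\,y)$, so $U_0(\cdot,s)=e^{\Delta}f_t$, and therefore
\begin{equation*}
\|\nabla_y U_0(\cdot,s)\|_{L^2(\mathbb R^2)}=\|\nabla(e^{\Delta}f_t)\|_{L^2(\mathbb R^2)}.
\end{equation*}
Since the relation $f(\lambda x)=\lambda^{-1}f(x)$ commutes with dilations, $f_t$ still satisfies $f_t(\lambda z)=\lambda^{-1}f_t(z)$. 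Lemma \ref{lem:L^2bound} then applies to $f_t$ and yields
\begin{equation*}
C(\lambda)^{-1}\|f_t\|_{L^2(\mathcal A)}\le\|\nabla_y U_0(\cdot,s)\|_{L^2}\le C(\lambda)\|f_t\|_{L^2(\mathcal A)}.
\end{equation*}

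The remaining step is to compare $\|f_t\|_{L^2(\mathcal A)}$ with $\|f\|_{L^2(\mathcal A)}$. I will change variables $x=\sqrt t\,z$. This gives $\|f_t\|_{L^2(\mathcal A)}=\|f\|_{L^2(\sqrt t\,\mathcal A)}$, because the two-dimensional Jacobian $t$ cancels the prefactor $(\sqrt t)^2$. The key point is that $\int_{\sqrt t\,\mathcal A}|f|^2$ does not depend on $t$. Indeed, for any $a>0$ the annulus $a\mathcal A=\{a\le |x|<a\lambda\}$ is a fundamental domain for the dilation $x\mapsto\lambda x$. Moreover, the measure $|f|^2dx$ is invariant under this dilation, as computed in the introduction: $\int_{\lambda E}|f|^2=\int_E|f|^2$. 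Cutting $a\mathcal A$ into the pieces lying inside the annuli $\lambda^k\mathcal A$ and translating each piece back by the appropriate power of $\lambda$ reassembles $\mathcal A$, so $\|f\|_{L^2(a\mathcal A)}=\|f\|_{L^2(\mathcal A)}$. In polar--logarithmic coordinates this is just the statement that the integral of the periodic function $|\tilde g|^2$ over any interval of length $\log\lambda$ in $\log r$ is the same.

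Combining the two steps, $\|\nabla_yU_0(\cdot,s)\|_{L^2}$ is comparable to $\|f\|_{L^2(\mathcal A)}$ uniformly in $s\in[0,T]$, with the same constant $C(\lambda)$ as in Lemma \ref{lem:L^2bound}. Taking the supremum over $s$ gives the upper bound. For the lower bound it suffices to use any single $s$, for instance $s=0$, where $f_1=f$ and the claim is exactly Lemma \ref{lem:L^2bound}.

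There is no real obstacle here. The only point needing care is justifying the scaling identity $e^{\Delta}f_t(y)=\sqrt t\,(e^{t\Delta}f)(\sqrt t\,y)$ for this locally $L^2$, $|x|^{-1}$-type datum. This follows by a direct substitution in the Gaussian kernel integral, which converges absolutely because $f\in L^{2,\infty}\subset L^1_{\rm loc}$ and $f$ has only polynomial growth. The fundamental-domain invariance is the other ingredient and is elementary.
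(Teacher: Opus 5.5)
Your argument is correct, and it takes a different route from the paper's. The paper does not rescale the datum. It writes $\|\nabla_y U_0(\cdot,s)\|_{L^2}^2=t\|\nabla_x u(\cdot,t)\|_{L^2}^2$ and applies Lemma~\ref{lem:L^2bound} only at $t=1$. For $t\in[1,\lambda^2]$ it then bounds $\|\nabla_x u(\cdot,t)\|_{L^2}\le\|\nabla_x u(\cdot,1)\|_{L^2}$ using $L^2$-contractivity of the heat semigroup, which gives the upper bound with constant $\lambda C(\lambda)$; the lower bound is taken at $s=0$ only.

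You instead apply the lemma at every $s$ to the rescaled datum $f_t$, which is still $\lambda$-DSS. You then use the dilation invariance $\|f\|_{L^2(a\mathcal A)}=\|f\|_{L^2(\mathcal A)}$ of the annular norm. The price is the elementary fundamental-domain computation, but you get more in return: $C(\lambda)^{-1}\|f\|_{L^2(\mathcal A)}\le\|\nabla_yU_0(\cdot,s)\|_{L^2}\le C(\lambda)\|f\|_{L^2(\mathcal A)}$ holds for every $s$. The constant is the lemma's own, without the extra factor $\lambda$. Consequently $\inf_s\|\nabla_yU_0(\cdot,s)\|_{L^2}$ and the period average $\int_0^T\|\nabla U_0\|_{L^2}^2\,ds$ are also comparable to $\|f\|_{L^2(\mathcal A)}$, respectively $\|f\|_{L^2(\mathcal A)}^2$.

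The paper's route is shorter and needs only the monotonicity of the Dirichlet energy along the heat flow. As written, however, it gives the lower bound at a single time only, and its upper bound is cruder.
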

	
	\begin{proof}
	Since the initial datum satisfies $f(\lambda x) = \lambda^{-1} f(x)$, the uniqueness of the heat equation implies that $u(x,t)$ satisfies 
		$	u(\lambda x, \lambda^2 t) = \lambda^{-1} u(x,t)$.
		Consequently, the profile $U_0(y,s)$ is periodic in  $s$ with period $T = 2 \log \lambda$. 
		
	Using the chain rule, we compute that
		\begin{equation*}
			\nabla_y U_0(y,s) = \sqrt{t} \nabla_y \big[ u(y\sqrt{t}, t) \big]  = t \nabla_x u(x,t).
		\end{equation*}
		It follows that
		\begin{align*}
		\int_{\mathbb{R}^2} |\nabla_y U_0(y,s)|^2 \, dy  =  \int_{\mathbb{R}^2} \big( t^2 |\nabla_x u(x,t)|^2 \big) (t^{-1} dx) 
			= t\|\nabla_x u(\cdot, t)\|_{L^2(\mathbb{R}^2)}^2.
		\end{align*}
Lemma \ref{lem:L^2bound}  implies that there exists a constant $C =C(\lambda)$ such that
		\begin{equation*}
			\frac{1}{C(\lambda)}	\|f\|_{L^2(\mathcal{A})}\leq \|\nabla_x u(\cdot, 1)\|_{L^2(\mathbb{R}^2)} \leq  C(\lambda)\|f\|_{L^2(\mathcal{A})}.
		\end{equation*}
	Then due to the heat semigroup is a contraction map on $L^2$, for all $t \geq 1$, we have $\|\nabla_x u(\cdot, t)\|_{L^2(\mathbb{R}^2)} \leq \|\nabla_x u(\cdot, 1)\|_{L^2(\mathbb{R}^2)}$.
		Applying this fact gives
		\begin{equation*}
	 \|\nabla_x u(\cdot, t)\|_{L^2(\mathbb{R}^2)}\leq \|\nabla_x u(\cdot, 1)\|_{L^2(\mathbb{R}^2)}\leq  C(\lambda)\|f\|_{L^2(\mathcal{A})}.
		\end{equation*}
		Hence,
		\begin{align*}
		\sup_{0\leq s\leq T}	 \|\nabla_y U_0(\cdot,s)\|_{L^2(\mathbb{R}^2)}  = \sup_{1\leq t\leq \lambda^2} \sqrt{t}\|\nabla_x u(\cdot, t)\|_{L^2(\mathbb{R}^2)}\leq \lambda C(\lambda)\|f\|_{L^2(\mathcal{A})}.
		\end{align*}
        On the other hand,
        \begin{align*}
	 \|\nabla_y U_0(\cdot,0)\|_{L^2(\mathbb{R}^2)} 
        =  \|\nabla_x u(\cdot, 1)\|_{L^2(\mathbb{R}^2)}\geq \frac{1}{C(\lambda)}	\|f\|_{L^2(\mathcal{A})}.
		\end{align*}
		This completes the proof.
	\end{proof}

	We are now ready to prove Proposition \ref{lem:Dirichlet energy estimates}, and one can see from the proof that the cancellation relation between the linear part $U_0$ and the perturbation part $V$ under the discretely self-similar condition plays a crucial role.
	\begin{proof}[Proof of Proposition \ref{lem:Dirichlet energy estimates}]
 Testing \eqref{eq:difference} with $V$,   using the periodicity of  $V$, after using integrating by parts, one has the energy identity
	\begin{equation}\label{eq:energyesti}
0=	\int_{0}^{T} \int_{\mathbb{R}^2} |\nabla V|^2 + ( V\cdot \nabla  U_0)\cdot V 	+	\int_{0}^{T} \int_{\mathbb{R}^2} ( U_0\cdot\nabla  U_0)\cdot  	V.
	\end{equation}

Next, 	multiplying \eqref{eq:difference} by $U_0$ and integrating on both sides yield
	\begin{equation}
		\begin{aligned}
			0=& \int_{0}^{T}  \int_{\mathbb{R}^2} \frac{\partial V}{\partial s}\cdot U_0 \,dy\,ds +	\int_{0}^{T}  \int_{\mathbb{R}^2} \nabla V: \nabla  U_0   - \frac{1}{2}  U_0\cdot V-\frac{1}{2}(y\cdot \nabla V)\cdot   U_0 \,dy\,ds  \\
			&~ +
			\int_{0}^{T} \int_{\mathbb{R}^2}  ( U_0\cdot \nabla V)\cdot  U_0 +  (V\cdot\nabla V)\cdot   U_0 dy\\
			=& 	\int_{0}^{T}  \int_{\mathbb{R}^2} \frac{\partial V}{\partial s}\cdot U_0 \,dy\,ds+ \int_{0}^{T}  \int_{\mathbb{R}^2} \nabla V: \nabla  U_0 \,dy\,ds    - \frac{1}{2}  U_0\cdot V-\frac{1}{2}(y\cdot \nabla V)\cdot   U_0 \,dy\,ds  \\
			&~ -
			\int_{0}^{T} \int_{\mathbb{R}^2}  ( U_0\cdot \nabla U_0)\cdot  V +  (V\cdot\nabla U_0)\cdot   V d y\\
		\end{aligned}
	\end{equation}	
	Finally, we test \eqref{eq:equforheateq} with $V$ to have
	\begin{equation}\label{eq:cancel}
		\begin{aligned}
			0=&	 \int_{0}^{T}  \int_{\mathbb{R}^2} \frac{\partial U_0}{\partial s}\cdot V \,dy\,ds+ \int_{0}^{T}  \int_{\mathbb{R}^2}  \left(\nabla V:\nabla U_0  -  \frac{1}{2} U_0\cdot V - \frac{1}{2}(y\cdot \nabla U_0)\cdot V \right)\,dy\,ds  \\
			=&\int_{0}^{T}  \int_{\mathbb{R}^2} \frac{\partial U_0}{\partial s}\cdot V \,dy\,ds + \int_{0}^{T}  \int_{\mathbb{R}^2} \left(\nabla V:\nabla U_0  +  \frac{1}{2} U_0\cdot V +
			\frac{1}{2}(y\cdot \nabla V)\cdot  U_0 \right) \,dy\,ds .
		\end{aligned}
	\end{equation}
 Summing \eqref{eq:energyesti}-\eqref{eq:cancel}, thanks to the periodicity of $U_0$ and $V$, one obtains
	\begin{equation}\label{eq:crucialidentity}
		\int_{0}^{T} \int_{\mathbb{R}^2} |\nabla V|^2 \,dy\,ds  + 2 	\int_{0}^{T}\int_{\mathbb{R}^2} \nabla  U_0\cdot \nabla V \,dy\,ds  =0.
	\end{equation}
	This, together with the H\"older inequality, leads to 
	\begin{equation*}\label{eq:dirichletener}
		\int_{0}^{T} \int_{\mathbb{R}^2} |\nabla V|^2 \,dy\,ds  \leq  4 	\int_{0}^{T}\int_{\mathbb{R}^2}|\nabla  U_0|^2 \,dy\,ds \leq C(A,\lambda),
	\end{equation*}
	where we have used Lemma \ref{lem:L2bound on a period}.
	This finishes the  proof.
\end{proof}

	\subsection{Proof of  Theorem \ref{thm:energyestkey}}

	In this section,
	 we return to  \eqref{eq:difference} and  try to utilize the good term $-\frac{1}{2}y\cdot \nabla V$ as much as possible. The main idea is to use the multiplier $|V|^{p-2}V$ for some $1<p<2$.
	If we formally test \eqref{eq:difference} with $|V|^{p-2}V$, we have 
	\begin{equation}\label{eq:formalest}
		\begin{aligned}
			& 	\int_{0}^{T} \int_{\mathbb{R}^2} (p-1)|\nabla V|^2 |V|^{p-2}  + \left(\frac{1}{p}-\frac{1}{2}\right) |V|^{p} \\
			\leq& -  	\int_{0}^{T} \int_{\mathbb{R}^2}  (V\cdot \nabla  U_0)\cdot |V|^{p-2}V + \nabla q\cdot |V|^{p-2}V + ( U_0\cdot \nabla  U_0)\cdot |V|^{p-2} V.
		\end{aligned}
	\end{equation}
	The left hand side contains an additional term $	\int_{0}^{T} \int_{\mathbb{R}^2} \left(\frac{1}{p}-\frac{1}{2}\right) |V|^{p} $, which 
	is the key for us to close the estimates of
	$\|V\|_{L^{p}}$ ($1<p<2$). 
	However, one still needs to control $\int_{0}^{T} \int_{\mathbb{R}^2}  (V\cdot \nabla  U_0)\cdot |V|^{p-2}V$  and $	\int_{0}^{T} \int_{\mathbb{R}^2}  \nabla q\cdot |V|^{p-2}V$. The control of $\int_{0}^{T} \int_{\mathbb{R}^2}  \nabla q\cdot |V|^{p-2}V$ needs a bootstrap argument as explained in Section \ref{sec:Main difficulties}.
 On the other hand, we have
    \begin{equation*}
        \int_{0}^{T} \int_{\mathbb{R}^2}  (V\cdot \nabla  U_0)\cdot |V|^{p-2}V\sim \|\nabla U_0\|_{L^\infty(\mathbb{R}^2)} \int_{0}^{T} \int_{\mathbb{R}^2} |V|^{p}.
    \end{equation*}
    Hence, we need extra smallness to handle this term.
	The idea here is to truncate  
  $U_0$  inside a large ball to ensure it is small enough in $W^{1,\infty}(\mathbb{R}^2)$ globally but still has the same boundary value at infinity.
 Then the above term can be absorbed by the left-hand side of \eqref{eq:formalest}. Finally, once the estimate for $\|V\|_{L^{p}}$  is established, the Gagliardo–Nirenberg interpolation inequality combined with \eqref{eq:dirichletener} yields the desired $L^2$-boundedness of $V$.

We now  construct a modified profile of $U_0$ that is globally small.
	To do so,  let $\eta(x)=\eta(|x|)$ be a smooth non-negative cut-off function
	satisfying 
	\begin{equation*}
		0\leq \eta (|x|) \leq 1, ~	\eta (|x|) =1 \textrm{ for } |x|\geq 2 \textrm{ and } \eta (|x|) =0  \textrm{ for }  |x|\leq 1.
	\end{equation*}
	We also denote for $R_0>1$ that
	\begin{equation}\label{eq:resacledversion}
		\eta_{R_0}(x)=\eta\left(\frac{x}{{R_0}}\right),
	\end{equation}
	then there exists some universal constant $C$ that
	\begin{equation*}
		|\nabla^k \eta_{R_0}|\leq \frac{C_k}{R_0^k}, ~ \forall k\in \mathbb{N}.
	\end{equation*}
	We define a modified version of $U_0$ by 
	\begin{equation}\label{eq:defoftildev}
		U_1= \eta_{R_0}	 U_0 +w, 
	\end{equation}
	where  $w$ solves 
	\begin{equation*}
		\nabla \cdot w = -U_0\cdot \nabla \eta_{R_0}, 
	\end{equation*}
	and hence  $\nabla \cdot U_1=0$.
    Such $w$ can be chosen as
	\begin{equation}\label{eq:corr}
		\begin{aligned}
			w(y,s) &=  - \frac{1}{2\pi}\nabla_y \int_{\mathbb{R}^2} (\ln |y-z|)U_0(z,s)\cdot \nabla  \eta_{R_0}(z)dz
			= -\frac{1}{2\pi} \int_{\mathbb{R}^2} \frac{y-z}{|y-z|^2}U_0 \cdot \nabla  \eta_{R_0}(z)dz.
		\end{aligned}
	\end{equation}

	We then have the following desired estimates for $U_1$,  which is similar to \cite[Lemma 2.5]{Tsai17} and \cite[Lemma 2.2]{gui2026forward}. The proof relies on Lemma \ref{lem:Linftyestforheat} below and 
    follows from similar arguments of \cite[Lemma 2.2]{gui2026forward} with slight modifications to the DSS and low regularity setting, we give the proof in Appendix \ref{sec:App} for reader's convenience.
	\begin{lemma}[Estimates for $  U_1$]\label{lem:Estimates for modified linear part}
		Let $U_0$ be defined in \eqref{eq:defofv0}.
		The vector field $U_1$ defined in \eqref{eq:defoftildev} is smooth, divergence-free,
	and we have
	\begin{equation}\label{eq:estnalbav1}
			\sup_{0\leq s\leq T} \|\nabla U_1\|_{L^2(\mathbb{R}^2)} \leq C(A,\lambda ,R_0).
	\end{equation}
		\begin{equation}\label{eq:estv1}
		\sum_{k=0}^{2}\sup_{0\leq s\leq T}\left\|\nabla^k U_1\right\|_{L^\theta(\mathbb{R}^2)}  \leq C(A,\lambda ,\theta,R_0), ~ \textrm{ for any  } \theta\in(2,\infty].
		\end{equation}
		We have the following decay estimates for $w$ and its time derivative:
		\begin{equation}\label{eq:estw}
			\sup_{0\leq s\leq T} \left| \nabla^k w(y,s)\right|
			\leq
			\frac{C(A,\lambda,k,R_0)}{1+|y|^{k+2}},
			\quad k\in\mathbb N,
		\end{equation}
		and
		\begin{equation}\label{eq:estsw}
			\sup_{0\leq s\leq T} \left| \nabla^k \partial_s w(y,s)\right|
			\leq
			\frac{C(A,\lambda,k,R_0)}{1+|y|^{k+2}},
			\quad k\in\mathbb N.
		\end{equation}
		Finally, for any $\epsilon>0$, we can choose $R_0=R_0(A,\lambda, \epsilon)$ in \eqref{eq:resacledversion} large enough so that 
		\begin{equation}\label{eq:smallness}
			\sup_{0\leq s\leq T} \|  U_1\|_{L^{\infty}(\mathbb{R}^2)}\leq \epsilon, ~ \sup_{0\leq s\leq T} \|\nabla   U_1\|_{L^{\infty}(\mathbb{R}^2)} \leq \epsilon.
		\end{equation}
	\end{lemma}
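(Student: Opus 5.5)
The plan is to split $U_1=\eta_{R_0}U_0+w$ and treat the two pieces separately. The key input is pointwise control of $U_0$ and its derivatives, uniform in $s\in[0,T]$. This comes from a heat-kernel estimate for DSS data in $L^2_{loc}$ (the forthcoming Lemma \ref{lem:Linftyestforheat}).

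\textbf{Step 1: pointwise bounds for $U_0$.} For $s\in[0,T]$, i.e. $t\in[1,\lambda^2]$, write $U_0(y,s)=\sqrt t\,(e^{t\Delta}u_0)(\sqrt t y)$. Split $u_0=u_0\mathbf 1_{B_{|x|/2}(x)}+u_0\mathbf 1_{B_{|x|/2}(x)^c}$.
\begin{itemize}
\item The far part is controlled by the Gaussian tail together with $\|u_0\|_{L^2(B_\rho)}\le C A(1+\log_\lambda\rho)^{1/2}$-type bounds. These hold because each dyadic annulus $\lambda^k\mathcal A$ carries $L^2$ mass $A^2$, and the DSS scaling gives $\|u_0\|_{L^2(B_{|x|/2}(x))}\le C(\lambda)A$.
\item The near part, by Cauchy--Schwarz against the heat kernel derivatives, gives the bound $C(A,\lambda)|x|^{-1}$ in the region where $|x|\gtrsim 1$.
\end{itemize}
Together with smoothing for $|y|$ bounded, this yields
\[
\sup_{0\le s\le T}|\nabla^k U_0(y,s)|+|\nabla^k\partial_sU_0(y,s)|\le \frac{C(A,\lambda,k)}{(1+|y|)^{1+k'}},
\]
with $k'\ge0$. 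At minimum we need $k'=0$ for $k=0$ and decay $o(1)$ for derivatives. The $\partial_s$ bounds follow from \eqref{eq:equforheateq}.

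To obtain \eqref{eq:smallness} I need $|U_0|+|\nabla U_0|\to0$ as $|y|\to\infty$ uniformly in $s$. This is where the rough data causes trouble. The estimate $|e^{t\Delta}u_0(x)|\lesssim \|u_0\|_{L^2(B_{|x|/2}(x))}|x|^{-1}$ gives only $O(|y|^{-1})$ via scaling, which is enough: multiplying by $\eta_{R_0}$ supported in $|y|\ge R_0$ gives $\|\eta_{R_0}U_0\|_\infty\le C/R_0$. Derivative bounds then carry the extra factors $R_0^{-1}$ from $\nabla\eta_{R_0}$.

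\textbf{Step 2: the corrector $w$.} The source $g=U_0\cdot\nabla\eta_{R_0}$ is supported in the annulus $R_0\le|z|\le2R_0$ and is bounded by $C/R_0^2$. The crucial point is that $\int g\,dz=0$. This follows from the divergence theorem and the zero-flux condition \eqref{eq:zeroflux}, since $U_0$ is divergence-free across the origin. Hence the monopole term in \eqref{eq:corr} cancels, and a Taylor expansion of the kernel $\frac{y-z}{|y-z|^2}$ for $|y|\ge 4R_0$ gives the decay $|y|^{-2-k}$ of \eqref{eq:estw}; the same argument with $\partial_s g$ gives \eqref{eq:estsw}. Near the annulus, $w$ is a Newtonian-type potential of a bounded compactly supported function. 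Its $L^\infty$ and $C^{1,\alpha}$ norms are therefore bounded by $C\,R_0\cdot R_0^{-2}\log$-free factors, e.g. $\|w\|_\infty\le C\|g\|_\infty R_0$ and $\|\nabla w\|_\infty\lesssim \|g\|_{C^\alpha}$ scaled. Both tend to $0$ as $R_0\to\infty$, which gives the $w$-part of \eqref{eq:smallness}.

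\textbf{Step 3: assembling the norms.}
\begin{itemize}
\item For \eqref{eq:estnalbav1}: $\nabla(\eta_{R_0}U_0)=\eta_{R_0}\nabla U_0+U_0\nabla\eta_{R_0}$. Proposition \ref{lem:L2bound on a period} controls the first term, and the second has $L^2$ norm at most $C\cdot R_0^{-1}\cdot R_0^{-1}\cdot R_0=CR_0^{-1}$. The function $\nabla w$ lies in $L^2$ by its $|y|^{-3}$ decay.
\item For \eqref{eq:estv1} with $\theta>2$: $U_0=O(|y|^{-1})$ lies in $L^\theta$ outside $B_{R_0}$, and the higher derivatives are handled likewise using Step 1 and the decay of $w$.
\end{itemize}

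\textbf{Main obstacle.} Step 1 is the hardest part: obtaining decay of $\nabla U_0$ and $\nabla^2U_0$ at infinity, uniformly in $s$, from merely $L^2_{loc}$ data. The near-field contribution has to be handled using only the local $L^2$ bound on balls $B_{|x|/2}(x)$, which scale like the fundamental annulus. I would first try the crude bound $|\nabla^k e^{t\Delta}u_0(x)|\le C t^{-(k+1)/2}\|u_0\|_{L^2(B_{|x|/2}(x))}+\text{tail}$. Since $t\in[1,\lambda^2]$ is fixed, this gives boundedness. For smallness of $\nabla U_0$ at large $|y|$ I would use the local $L^2$ norm at scale $|x|/2$ combined with heat smoothing at the fixed scale $\sqrt t\sim1$. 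In that regime the local $L^2$ mass on a unit ball near $x$ is $o(1)$ as $|x|\to\infty$, by the $|x|^{-1}$ scaling of DSS fields and absolute continuity of the integral on $\mathcal A$.
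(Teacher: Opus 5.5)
\textbf{There is a genuine error in Step 1, and Steps 2 and 3 lean on it.}

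The estimate $|e^{t\Delta}u_0(x)|\lesssim \|u_0\|_{L^2(B_{|x|/2}(x))}|x|^{-1}$ is false for $t\in[1,\lambda^2]$. Cauchy--Schwarz against a heat kernel of width $\sqrt t\sim 1$ only gives $C t^{-1/2}\|u_0\|_{L^2(B_{|x|/2}(x))}\le C(\lambda)A$, that is, $|U_0|\le C$. A factor $|x|^{-1}$ would need $\sqrt t\sim|x|$, or $u_0\in L^\infty(\mathcal A)$. The latter is exactly the bound \eqref{eq:U0-decay-for-LS}, which the paper only invokes for $C^2_{\rm loc}$ data.

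For $L^2_{\rm loc}$ data there is no algebraic rate at all. Take $u_0=\nabla^\perp\psi$, with $\psi$ a $0$-homogeneous DSS stream function behaving like $|z-z_0|^{1-\alpha}$ near some $z_0\in\mathcal A$, where $0<\alpha<1$. This $u_0$ lies in $L^2(\mathcal A)$, is divergence-free and has zero flux. Near $\lambda^k z_0$ one has $\psi\approx\lambda^{-k(1-\alpha)}|z-\lambda^k z_0|^{1-\alpha}$. Hence $U_0$ and $\nabla U_0$ have size $\lambda^{-k(1-\alpha)}\approx|y|^{-(1-\alpha)}$ at unit distance from these points. In this example even $|y|\,|\nabla U_0|$ is unbounded, so the decay you assert for $\partial_sU_0$ via \eqref{eq:equforheateq} is not available either.

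Consequently the following steps are unsupported:
\begin{itemize}
\item the bound $\|\eta_{R_0}U_0\|_\infty\le C/R_0$;
\item the bound $|g|\le C/R_0^2$;
\item the Step 3 argument ``$U_0=O(|y|^{-1})$ lies in $L^\theta$ outside $B_{R_0}$''.
\end{itemize}
The last cannot be fixed by any pointwise argument, since $o(1)$ decay gives no $L^\theta$-integrability. Separately, $u_0\notin L^2(B_\rho)$: the infinitely many annuli $\lambda^k\mathcal A$, $k\to-\infty$, each carry mass $A^2$. For the Gaussian tail you should use $\int_{B_\rho}|u_0|\le C(\lambda)A\rho$ instead.

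\textbf{How to repair it.} Use only what Lemma \ref{lem:Linftyestforheat} actually provides:
\begin{itemize}
\item uniform $L^\theta$ bounds on $\nabla^kU_0$ for $\theta\in(2,\infty]$, which come from $e^{\Delta}\colon L^{2,\infty}\to L^\theta$, not from pointwise decay;
\item vanishing tails, in particular $\sup_s\sup_{|y|>R}(|U_0|+|\nabla U_0|)\to0$. This is precisely the unit-scale absolute-continuity argument in your last paragraph.
\end{itemize}
For \eqref{eq:estsw} you only need $\partial_sU_0$ on the fixed annulus $R_0\le|z|\le2R_0$, where it is $O(R_0)$.

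With the $o(1)$ tail your Step 2 still closes. Set $N=\frac1{2\pi}\log|\cdot|$ and $\tilde g(\zeta)=U_0(R_0\zeta,s)\cdot\nabla\eta(\zeta)$, which is supported in $1\le|\zeta|\le2$. Then $w(y)=-(\nabla N*\tilde g)(y/R_0)$. Hence $\|w\|_\infty\lesssim\sup_{R_0\le|z|\le 2R_0}|U_0|\to0$, and $\|\nabla w\|_\infty\lesssim R_0^{-1}\|\tilde g\|_{C^\alpha}\lesssim R_0^{\alpha-1}\sup_s\|U_0\|_{C^1}\to0$. Similarly $\|\eta_{R_0}U_0\|_{C^1}\to0$.

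\textbf{Comparison with the paper.} Once repaired, your pointwise/potential route to \eqref{eq:smallness} differs from the paper's, which uses no pointwise information at all. Since $U_0\cdot\nabla\eta_{R_0}=\nabla\cdot(\eta_{R_0}U_0)$, the paper writes $w=-\nabla\Delta^{-1}\nabla\cdot(\eta_{R_0}U_0)$, a Riesz-transform image of $\eta_{R_0}U_0$. Then $\|w\|_{W^{2,r}}\le c(r)\|\eta_{R_0}U_0\|_{W^{2,r}}\to0$ by the $L^r$ tails, and Morrey's embedding gives $C^1$-smallness of both pieces.

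Your proof of the decay \eqref{eq:estw}--\eqref{eq:estsw} matches the paper's: vanishing mean of $g$ and $\partial_sg$ from zero flux, then expansion of the kernel. Citing Proposition \ref{lem:L2bound on a period} for $\|\nabla U_0\|_{L^2}$ in \eqref{eq:estnalbav1} is correct, and is actually needed, since Lemma \ref{lem:Linftyestforheat} only covers $\theta>2$.
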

    The following elementary estimates for the solution to the heat equation with $L^{2}_{loc}$ initial data are needed. 
	\begin{lemma}\label{lem:Linftyestforheat}
		Suppose $u_0\in L^{2}_{loc}(\mathbb{R}^2\setminus\{0\})$ and is $\lambda$-DSS for some $\lambda > 1$ and let $U_0$ be defined 
		 in \eqref{eq:defofv0}. Then, for any $\theta\in(2,\infty]$, we have for any $k\in \mathbb{N}$ that
		\begin{equation*}
		\sup_{0\leq s\leq T}\|\nabla^k U_0\|_{L^\theta(\mathbb{R}^2)} \leq C(A,\lambda ,\theta,k).
		\end{equation*}
		Moreover,
			\begin{equation*}
			\sup_{0\leq s\leq T}\|U_0\|_{L^\theta(|y|>R)} + \sup_{0\leq s\leq T}\|\nabla U_0\|_{L^\theta(|y|>R)}  + \sup_{0\leq s\leq T}\|\nabla^2 U_0\|_{L^\theta(|y|>R)} \leq \Theta(R),
		\end{equation*}
		where $\Theta>0$ depends on $\theta$ and $A$ but satisfies
		$ \Theta(R)\to 0$ as $R\to\infty$.
	\end{lemma}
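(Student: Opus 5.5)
By periodicity in $s$ it suffices to work with $t=e^s\in[1,\lambda^2]$. In that range $U_0(y,s)=\sqrt t\,(e^{t\Delta}u_0)(\sqrt t\,y)$ and $\nabla^k_y U_0(y,s)=t^{(k+1)/2}(\nabla^k e^{t\Delta}u_0)(\sqrt t\,y)$. Hence all norms in the statement are comparable, with constants depending only on $\lambda,\theta,k$, to the corresponding norms of $\nabla^k e^{t\Delta}u_0$ for $t\in[1,\lambda^2]$. The plan is to split $u_0=u_0\mathbf 1_{B_1}+u_0\mathbf 1_{B_1^c}$ and bound the two heat flows separately.

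\textbf{Local piece.} Since $u_0\mathbf 1_{\lambda^{-j}\mathcal A}$ has $L^1$ norm $\lambda^{-j}\|u_0\mathbf 1_{\mathcal A}\|_{L^1}\le C\lambda^{-j}A$ (by DSS scaling and Cauchy--Schwarz on $\mathcal A$), summing over $j\ge1$ gives $\|u_0\mathbf 1_{B_1}\|_{L^1}\le C(\lambda)A$. The heat kernel estimate $\|\nabla^k e^{t\Delta}g\|_{L^\theta}\le Ct^{-k/2-(1-1/\theta)}\|g\|_{L^1}$ then handles every $\theta\in[1,\infty]$ for $t\in[1,\lambda^2]$.

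\textbf{Far piece.} Write $g=u_0\mathbf 1_{B_1^c}$. For $L^\infty$ and pointwise bounds, use the Gaussian kernel and a dyadic decomposition into the annuli $\lambda^j\mathcal A$, $j\ge0$. On each annulus $\|u_0\|_{L^2(\lambda^j\mathcal A)}=A$, so
\[
|\nabla^k e^{t\Delta}g(x)|\le C\sum_{j\ge0}\|\nabla^k G_t(x-\cdot)\|_{L^2(\lambda^j\mathcal A)}\,A .
\]
For $|x|$ comparable to $\lambda^{j_0}$, only $O(1)$ annuli near $\lambda^{j_0}$ contribute more than Gaussian-small terms, each at most $CA\,t^{-(k+1)/2}$. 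This gives a uniform $L^\infty$ bound.

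Decay in $L^\infty$ is not enough, because $L^\theta$ integrability on $\mathbb R^2$ needs a decay rate. I would obtain it by exploiting $u_0\in L^{2,\infty}$ (the remark after Theorem~\ref{thm:main}), i.e.\ $u_0$ has the size of $|x|^{-1}$ on average. For $\theta>2$, $L^{2,\infty}\subset L^{\theta}+L^{q}$ is false in general, so I would instead interpolate. The heat semigroup maps $L^{2,\infty}\to L^{\theta,\infty}$ with norm $Ct^{-(1/2-1/\theta)}$, and $L^{2,\infty}\to L^\infty$, both with derivatives. Real interpolation between these (Marcinkiewicz on the linear operator $e^{t\Delta}\nabla^k$, which is bounded $L^{2,\infty}\to L^{p_i,\infty}$ for two values $2<p_0<\theta<p_1\le\infty$) yields strong $L^\theta$ bounds for every $\theta\in(2,\infty]$. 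Concretely, the operator is bounded $L^{r}\to L^{p}$ for all $r\le p$ by Young, so applying Marcinkiewicz with the fixed source exponent $r=2$ upgrades weak-to-strong whenever $\theta>2$. Since $\|u_0\|_{L^{2,\infty}}\le C(\lambda)A$, this proves the first estimate. It is cleaner than the annulus decomposition and handles the local piece too.

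\textbf{Tail estimate.} Since $\Theta$ may depend on $u_0$ through $A$ (and implicitly on $u_0$ itself), I would argue by density. Approximate $u_0$ in the $\mathcal A$-norm by a smooth DSS field $u_0^\varepsilon$ (smooth on the annulus, extended by scaling), with $\|u_0-u_0^\varepsilon\|_{L^2(\mathcal A)}<\varepsilon$. For $u_0^\varepsilon$, which is bounded by $C_\varepsilon|x|^{-1}$ with derivatives of matching homogeneity, standard kernel estimates give
\[
|\nabla^k U_0^\varepsilon(y,s)|\le C_\varepsilon(1+|y|)^{-1-k}
\]
(gradients decay faster since smooth homogeneous-type data have $\nabla e^{t\Delta}u_0^\varepsilon\sim|x|^{-2}$). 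For $\theta>2$ this gives $\|\nabla^k U_0^\varepsilon\|_{L^\theta(|y|>R)}\to0$ as $R\to\infty$. The difference is controlled by the first part: $C(\lambda,\theta)\,\varepsilon$ uniformly in $s$. Choosing $\varepsilon$ and then $R$ gives $\Theta(R)\to0$.

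The main obstacle is the weak-to-strong step in the first part, i.e.\ making the Marcinkiewicz argument rigorous for $\theta$ near $2$. If it fails, the fallback is the dyadic annulus computation, tracking the decay $\lambda^{-j(1-2/\theta)}$ of $L^\theta$ norms on far annuli.
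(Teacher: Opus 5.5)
Your proof is correct. The paper gives no argument of its own for this lemma; it only refers to Lemma 3.2 of \cite{Tsai17}, the three-dimensional analogue for $L^3_{\rm loc}$ data. Your proposal is therefore a self-contained two-dimensional proof of the standard type: global bounds from $\|u_0\|_{L^{2,\infty}}\le C(\lambda)A$, and tail decay by splitting off a small rough DSS piece. It also treats all $\nabla^k$ explicitly.

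\textbf{The weak-to-strong step.} The step you flag as the main obstacle is harmless. The cleanest route is to fix the target exponent. By Young's inequality, $\nabla^k e^{t\Delta}$ is bounded $L^{r_0}\to L^\theta$ and $L^{r_1}\to L^\theta$ whenever $r_0<2<r_1\le\theta$. Real interpolation then gives $L^{2,\infty}=(L^{r_0},L^{r_1})_{\vartheta,\infty}\to(L^\theta,L^\theta)_{\vartheta,\infty}=L^\theta$ for every $\theta\in(2,\infty]$, with no restriction near $\theta=2$. Equivalently, split $u_0$ at height one to get $L^{2,\infty}\subset L^{r_0}+L^{\theta}$ for any $r_0<2<\theta$, and $\nabla^k e^{t\Delta}$ maps both summands into $L^\theta$.

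This makes your $L^1$ local piece and the annulus computation superfluous. Your fallback rate $\lambda^{-j(1-2/\theta)}$ on far annuli is only valid for data bounded by $C|x|^{-1}$. For general $L^2(\mathcal A)$ data the far-annulus contributions have no uniform rate, so that route would not have been a genuine alternative.

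\textbf{The tail step.} Smoothness of the approximant is not needed. The truncation $u_0^M=u_0\mathbf 1_{\{|x||u_0(x)|\le M\}}$ is again $\lambda$-DSS and satisfies $\|u_0-u_0^M\|_{L^2(\mathcal A)}\to0$. The bound $|u_0^M|\le M|x|^{-1}$ already gives $|\nabla^kU_0^M(y,s)|\le C(\lambda,k)M(1+|y|)^{-1}$, which suffices since $\theta>2$. If you do want smooth approximants, take them compactly supported in the interior of $\mathcal A$ before extending by scaling, or mollify in log-polar variables as in Section~\ref{sec:roughdata}. A field that is merely smooth on $\overline{\mathcal A}$ can jump across $|x|=\lambda^j$ after extension.

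\textbf{Dependence of $\Theta$.} Your caveat that $\Theta$ depends on $u_0$ itself, not only on $A$, is necessary rather than a convenience. Place a nonnegative bump of $L^2$-norm one and width $\delta$ inside $\mathcal A$. At the scale $\lambda^j\approx\delta^{-1}$, its DSS extension is a bump of width and mass of order one. Hence $U_0(\cdot,0)=e^{\Delta}u_0$ carries an $O(1)$ amount of $L^\theta$-norm near $|y|\approx\delta^{-1}$, while $A=1$. So no $\Theta$ depending only on $(A,\theta)$ exists, and your density argument, with $\Theta$ depending on $u_0$, is the correct reading of the statement.
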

	\begin{proof}
		 One can see \cite[Lemma 3.2]{Tsai17} for a proof.
	\end{proof}

	With this new profile $U_1= \eta_{R_0}	 U_0 +w$, we define the modified version of $V$ as
	\begin{equation}\label{eq:modre}
		V_1=U- U_1,
	\end{equation}
	where $U$ is the solution to \eqref{eq:Leray}.
	It is straightforward to check that $V_1$ satisfies
	\begin{equation}\label{eq:eqfordiff}
		\left\{
		\begin{aligned}
			&\frac{\partial V_1}{\partial s}-\Delta V_1 -\frac{1}{2}V_1 -\frac{1}{2}y\cdot \nabla V_1+  (U_1+V_1)\cdot \nabla V_1 + V_1\cdot \nabla  U_1 +\nabla \Pi=- U_1\cdot\nabla  U_1 - F(U_0,w),\\
			&  \nabla \cdot V_1=0,		
		\end{aligned}		
		\right.
	\end{equation} 
	where using \eqref{eq:equforheateq}, we have that
	\begin{equation}\label{eq:eqformodifv}
		\begin{aligned}
			F(U_0,w) & =   \frac{\partial U_1}{\partial s}-\Delta  U_1 -\frac{1}{2} U_1 -\frac{1}{2}y\cdot \nabla  U_1 \\
			&	=  		\frac{	\partial (\eta_{R_0}U_0+w)}{\partial s}  -\Delta (\eta_{R_0}	U_0 +w )- \frac{1}{2}(\eta_{R_0}	U_0 +w ) -\frac{1}{2}y\cdot \nabla (\eta_{R_0}	U_0 +w )\\
			&= 	- 2 \nabla \eta_{R_0}\nabla 	U_0- \Delta \eta_{R_0}	U_0   -\frac{1}{2}y\cdot \nabla \eta_{R_0}	 U_0 + \frac{\partial w}{\partial s} -\Delta w  -\frac{1}{2} w -\frac{1}{2}y\cdot \nabla  w.
		\end{aligned}
	\end{equation}
To show Theorem \ref{thm:energyestkey}, it is indeed sufficient to show the corresponding estimates for $V_1$. 
	\begin{proposition}\label{thm:energyest}
		Under the same assumptions of	Theorem \ref{thm:energyestkey}, for any $p\in(1,2)$, we can choose
		$R_0=R_0(A,\lambda, p)$ in   \eqref{eq:defoftildev} large enough, such that 
		the following a priori estimates   for $V_1$ defined in \eqref{eq:modre} hold,
		\begin{equation*}
			\sup_{0\leq s\leq T}\|V_1(\cdot, s)\|_{L^p(\mathbb{R}^2)} \leq C(A,\lambda, p),
		\end{equation*}
		\begin{equation*}
			\sup_{0\leq s\leq T}\|V_1(\cdot, s)\|_{L^2(\mathbb{R}^2)}^2 +	\int_{0}^{T} \int_{\mathbb{R}^2} |\nabla V_1|^2 \,dy \,ds\leq C(A,\lambda, p).
		\end{equation*}
	\end{proposition}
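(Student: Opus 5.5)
The plan is to run the $L^p$ energy method on \eqref{eq:eqfordiff}, testing with $|V_1|^{p-2}V_1$ for a fixed $p\in(1,2)$, and to close it through the bootstrap quantity $M:=\sup_{0\le s\le T}\|V_1(s)\|_{L^p}$. First I record the inputs available for $V_1$. Since $V_1=V+U_0-U_1=V+(1-\eta_{R_0})U_0-w$, Proposition \ref{lem:Dirichlet energy estimates}, Lemma \ref{lem:Linftyestforheat} and \eqref{eq:estw} give $\int_0^T\|\nabla V_1\|_{L^2}^2\,ds\le C(A,\lambda,R_0)$. Here $(1-\eta_{R_0})U_0$ is compactly supported and bounded in $H^1$, while $\nabla w$ decays like $|y|^{-3}$. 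The forcing $F(U_0,w)$ in \eqref{eq:eqformodifv} consists of terms supported in $R_0\le|y|\le 2R_0$ together with $w$-terms. By \eqref{eq:estw}–\eqref{eq:estsw}, $|F|\lesssim(1+|y|)^{-2}$ away from the annulus. Combined with the annulus terms, this puts $F\in L^\infty_sL^q$ for every $q>1$, which suffices to pair $F$ with $|V_1|^{p-1}\in L^{p/(p-1)}$. The term $U_1\cdot\nabla U_1$ is handled the same way using \eqref{eq:estv1} and the decay of $w$.

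Next I carry out the test, justifying it formally for the solution in Definition \ref{def:energy persol} and by periodicity in $s$. The time derivative integrates to zero over a period. Diffusion gives $(p-1)\int|\nabla V_1|^2|V_1|^{p-2}\ge0$, and the drift gives the coercive term $(\frac1p-\frac12)\int_0^T\|V_1\|_{L^p}^p$ as in \eqref{eq:corce}. The transport term $(U_1+V_1)\cdot\nabla V_1$ integrates to zero by the divergence-free condition. The stretching term is bounded by $\|\nabla U_1\|_{L^\infty}\int|V_1|^p\le\epsilon\int|V_1|^p$ via \eqref{eq:smallness}. I choose $R_0=R_0(A,\lambda,p)$ so that $\epsilon\le\frac14(\frac1p-\frac12)$, which lets this term be absorbed. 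The forcing terms are bounded by $C\int_0^T\|V_1\|_{L^p}^{p-1}$ and absorbed by Young's inequality.

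The pressure is the main obstacle. I write $\Pi=\sum R_iR_j\big((U_1+V_1)_i(U_1+V_1)_j\big)+\Pi_F$, where $\Pi_F$ accounts for the gradient part of the forcing. Calderón–Zygmund bounds $\|\nabla\Pi\|_{L^p}$ by $\|V_1\nabla V_1\|_{L^p}$ plus linear terms $\|U_1\nabla V_1\|_{L^p}+\|V_1\nabla U_1\|_{L^p}$ plus forcing terms. The linear term $\|U_1\nabla V_1\|_{L^p}$ is the delicate one, because $\nabla V_1$ is only in $L^2$. I use $\|U_1\|_{L^{2p/(2-p)}}\|\nabla V_1\|_{L^2}$ with \eqref{eq:estv1} and pair it with $\|V_1\|_{L^p}^{p-1}$. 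This produces $\int_0^T\|V_1\|_{L^p}^{p-1}\|\nabla V_1\|_{L^2}\le CM^{p-1}$. The quadratic term, via Gagliardo–Nirenberg as in \eqref{eq:pressure-critical-term}, gives $CM^{p/2}$ after Hölder in $s$ with the space-time Dirichlet bound. The outcome is $\int_0^T\|V_1\|_{L^p}^p\,ds\le C(1+M^{p/2}+M^{p-1})$.

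To turn this into a pointwise-in-$s$ bound, I pick $s_0$ with $\|V_1(s_0)\|_{L^p}^p\le T^{-1}C(1+M^{p/2}+M^{p-1})$. I then integrate the differential inequality $\frac{d}{ds}\|V_1\|_{L^p}^p\le(\text{right-hand side integrand})$ from $s_0$ over one period, using periodicity to cover $[0,T]$, which yields $M^p\le C(1+M^{p/2}+M^{p-1})$. Since $p/2<p$ and $p-1<p$, this bounds $M$. The $L^\infty_sL^2$ bound comes next. Testing with $V_1$, the drift cancels and the pressure drops out, so $\frac{d}{ds}\|V_1\|_2^2\lesssim\|\nabla V_1\|_2^2+\|V_1\|_2^2+\dots$. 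Instead of Grönwall, I interpolate $\|V_1\|_{L^2}\lesssim\|V_1\|_{L^p}^{p/2}\|\nabla V_1\|_{L^2}^{1-p/2}$ to get $\int_0^T\|V_1\|_2^2<C$. Picking a good slice and integrating over one period then gives the $\sup_s$ bound on $\|V_1\|_{L^2}^2$.
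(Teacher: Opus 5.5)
Your proposal is correct and follows the paper's proof essentially step for step. The shared steps are: the $|V_1|^{p-2}V_1$ test with the coercive drift term; Calder\'on--Zygmund for the pressure, combined with Gagliardo--Nirenberg and the bootstrap quantity $M$; a good time slice plus periodicity to get $M^p\le C(1+M^{p/2}+M^{p-1})$; and finally interpolation and a good slice for the $L^\infty_sL^2$ bound.

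The differences are cosmetic. You keep the $U_1\cdot\nabla V_1$ pressure piece as $CM^{p-1}$ instead of absorbing it by Young's inequality, and you integrate over one period instead of using Gr\"onwall. One bookkeeping correction: the pressure piece $\|V_1\nabla U_1\|_{L^p}$ contributes $c(p)\epsilon\int_0^T\|V_1\|_{L^p}^p$. So $\epsilon$ must also be small relative to the Calder\'on--Zygmund constant $c(p)$, as the paper arranges, not merely $\epsilon\le\frac14(\frac1p-\frac12)$.
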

	\begin{remark}
We should mention that  the profile $V_1$ in Theorem \ref{thm:energyest} depend on the cut-off parameter $R_0$, and hence  depend on $p$. But we usually omit this dependence on $p$ since it shall not cause any ambiguity.
	\end{remark}

	\begin{proof}[Proof of Proposition \ref{thm:energyest}]
		Throughout this proof, we let  $C(A,\lambda,p, R_0)$ to denote a generic constant depending on $A,\lambda$, $p$ and $R_0$, which may vary from line to line; the same convention applies to $C(A,\lambda,R_0)$ and $C(A,\lambda,p)$. Moreover, we  use $C_p$ to denote a generic constant depending only on $p$.

	Recall \eqref{eq:defoftildev} and \eqref{eq:modre}, one then has
	\begin{equation*}
	V - V_1 =U_1 -U_0 = (\eta_{R_0}-1)U_0 +w.
\end{equation*}
It follows from Proposition \ref{lem:Dirichlet energy estimates} and Lemmas \ref{lem:Linftyestforheat} and \ref{lem:Estimates for modified linear part} that
	\begin{equation}\label{eq:Dirichletestformodifiedprofile}
		\begin{aligned}
\|\nabla V_1\|_{L^2(0,T;L^{2}(\mathbb{R}^2))}  & \leq \|\nabla V\|_{L^2(0,T;L^{2}(\mathbb{R}^2))}     + \|\nabla ((\eta_{R_0}-1)U_0) + \nabla w \|_{L^2(0,T;L^{2}(\mathbb{R}^2))}   \\
				&\leq C(A,\lambda,R_0).
		\end{aligned}
	\end{equation}
	
		 Multiplying the equation of $V_1$ \eqref{eq:eqfordiff} by $|V_1|^{p-2}V_1$ ($p\in(1,2)$),  integrating  both sides on $\mathbb{R}^2$, it yields that
		\begin{equation}\label{eq:muptiplier1}
			\begin{aligned}
				&  \int_{\mathbb{R}^2} \left(\frac{\partial V_1}{\partial s}	-\Delta V_1 -\frac{1}{2}V_1 -\frac{1}{2}y\cdot \nabla V_1+  (U_1+V_1)\cdot \nabla V_1   \right) \cdot |V_1|^{p-2}V_1\\
				= &-\int_{\mathbb{R}^2}  \left( U_1\cdot\nabla  U_1 + F(U_0,w) + V_1\cdot \nabla  U_1 +\nabla \Pi\right)\cdot  |V_1|^{p-2}V_1 
			\end{aligned}
		\end{equation}

		\textbf{Estimates for the left-hand side of \eqref{eq:muptiplier1}.}
		We estimate the  left-hand side of \eqref{eq:muptiplier1} term by term.
		For the first term,  one has
		\begin{equation*}
 \int_{\mathbb{R}^2} \frac{\partial V_1}{\partial s}\cdot |V_1|^{p-2}V_1 \, dy=\frac{1}{p} \frac{d }{d s} \int_{\mathbb{R}^2} |V_1|^p \, dy.
		\end{equation*}
		For the Laplacian term, we have 
		\begin{equation*}
			\begin{aligned}
				& \int_{\mathbb{R}^2}	-\Delta V_1  \cdot |V_1|^{p-2}V_1 = \int_{0}^{T} \int_{\mathbb{R}^2}	-\Delta V_{1,i}   |V_1|^{p-2}V_{1,i}\\
				=& \int_{\mathbb{R}^2} |\nabla V_1|^2 |V_1|^{p-2} +   \nabla V_{1,i}\cdot \nabla |V_1|^{p-2}V_{1,i}\\
				=& \int_{\mathbb{R}^2} |\nabla V_1|^2 |V_1|^{p-2} +  \frac{1}{2}\nabla |V_1|^{2} \cdot \nabla |V_1|^{p-2}\\
				\geq & \int_{\mathbb{R}^2} |\nabla V_1|^2 |V_1|^{p-2} + (p-2) |V_1|^{p-2} |\nabla V_1|^2= \int_{\mathbb{R}^2} (p-1) |\nabla V_1|^2 |V_1|^{p-2},
			\end{aligned}
		\end{equation*}
		where  the condition $p<2$ guarantees  the second-to-last inequality holds.
		In above, $V_{1,i}$ is the $i$-th component of $V_1$. Next, we have 
		\begin{equation*}\label{eq:est11}
			\begin{aligned}
				\int_{\mathbb{R}^2}	 -\frac{1}{2}y\cdot \nabla V_1\cdot |V_1|^{p-2}V_1\, dy
				&= \int_{\mathbb{R}^2}-\frac{1}{4}y\cdot \nabla |V_1|^2 (|V_1|^{2})^{\frac{p}{2}-1}\, dy\\
				&= \int_{\mathbb{R}^2}-\frac{1}{4}y\cdot \nabla  (|V_1|^{2})^{\frac{p}{2}}\frac{2}{p}\, dy\\
				&=\frac{1}{p}  \int_{\mathbb{R}^2}  |V_1|^{p}\, dy\, ds.\\
			\end{aligned}
		\end{equation*}
	 Integration by parts yields that
		\begin{equation*}\
			\begin{aligned}
			\int_{\mathbb{R}^2}  (U_1+V_1)\cdot \nabla V_1\cdot |V_1|^{p-2}V_1\, dy
				&= \frac{1}{2} \int_{\mathbb{R}^2}	 (U_1+V_1)\cdot \nabla(|V_1|^{2})^{\frac{p}{2}}\frac{2}{p}\, dy\, ds=0.
			\end{aligned}
		\end{equation*}
		Combining all the above estimates gives
		\begin{equation}\label{eq:est1}
			\begin{aligned}
				& \int_{\mathbb{R}^2}	\left(\frac{\partial V_1}{\partial s}-\Delta V_1 -\frac{1}{2}V_1 -\frac{1}{2}y\cdot \nabla V_1+  U_1\cdot \nabla V_1+V_1\cdot \nabla V_1 \right) \cdot |V_1|^{p-2}V_1 \, dy\\
				\geq& \frac{1}{p} \frac{d }{d s} \int_{\mathbb{R}^2} |V_1|^p \, dy +  \int_{\mathbb{R}^2} \left[(p-1)|\nabla V_1|^2 |V_1|^{p-2} + \left(\frac{1}{p}-\frac{1}{2}\right) |V_1|^{p} \right]\, dy.
			\end{aligned}
		\end{equation}

		\textbf{Estimates for the right-hand side of \eqref{eq:muptiplier1}.}
		We now turn to estimate  the right-hand side of \eqref{eq:muptiplier1}.
		We  first note that for any $1<p\leq 2$,
		\begin{equation}\label{eq:forcetermest}
			\begin{aligned}
						\sup_{0\leq s\leq T}\|F(U_0,w)	\|_{L^{p}}\leq C(A,\lambda, p,R_0).
			\end{aligned}
		\end{equation}
		Indeed,  using \eqref{eq:eqformodifv} and  Lemmas \ref{lem:Linftyestforheat} and \ref{lem:Estimates for modified linear part} , for any $1<p\leq 2$,
%		\begin{equation*}
%			\int_{0}^{T} \int_{0}^{T} \int_{\mathbb{R}^2} |\Delta w|^{p} \\
%			\leq C(A,\lambda, p),
%		\end{equation*}
%		\begin{equation*}
%			\left|2 \nabla \eta_{R_0}\nabla 	U_0+\Delta \eta_{R_0}	U_0   +\frac{1}{2}y\cdot \nabla \eta_{R_0}	 U_0  + \frac{1}{2} w + \frac{1}{2}y\cdot \nabla  w\right|\leq \frac{C(A,\lambda, p)}{1+|y|^2}.
%		\end{equation*}
 we have  that
		\begin{equation*}\label{eq:forceterms}
			\begin{aligned}
 \int_{\mathbb{R}^2}	|F(U_0,w)|^{p}
				\leq & C \int_{\mathbb{R}^2} \left|2 \nabla \eta_{R_0}\nabla 	U_0+\Delta \eta_{R_0}	U_0   +\frac{1}{2}y\cdot \nabla \eta_{R_0}	 U_0  +\frac{\partial w}{\partial s}-\frac{1}{2} w + \frac{1}{2}y\cdot \nabla  w +\Delta w \right|^{p} 
				\\
				\leq &
				C(A,\lambda, p,R_0).
			\end{aligned}
		\end{equation*}	
        Furthermore, one has from \eqref{eq:estnalbav1}, \eqref{eq:estv1} and $\frac{2p}{2-p}>2$ (since $1<p<2$) that
		\begin{equation}\label{eq:forceest}
			\sup_{0\leq s\leq T}	\| U_1\|_{L^{\frac{2p}{2-p}}} \leq C(A,\lambda, p, R_0),  ~ \sup_{0\leq s\leq T}\|U_1 \nabla U_1\|_{L^p} \leq  \sup_{0\leq s\leq T} \|U_1\|_{L^{\frac{2p}{2-p}}} \|\nabla U_1\|_{L^2}\leq C(A,\lambda, p,R_0).
		\end{equation}
		Now we can conclude by using the above and \eqref{eq:forcetermest} that
		\begin{equation}\label{eq:force}
			\begin{aligned}
				&\quad	\left|\int_{\mathbb{R}^2}	( U_1\cdot\nabla  U_1 - F(U_0,w))\cdot |V_1|^{p-2}V_1 \right|\\
				&\leq  \int_{\mathbb{R}^2}	 C| U_1\cdot\nabla  U_1|^{p} + C | F(U_0,w)|^{p} + \frac{1}{10}\left(\frac{1}{p}-\frac{1}{2}\right)|V_1|^{p}  \\
				& \leq C(A,\lambda, p,R_0)+  \frac{1}{10}\left(\frac{1}{p}-\frac{1}{2}\right)\int_{\mathbb{R}^2} |V_1|^{p}.
			\end{aligned}
		\end{equation}
			For the second-to-last term on the right-hand side of \eqref{eq:muptiplier1}, using Lemma \ref{lem:Estimates for modified linear part}, one has for $R_0=R_0(A,\lambda,\epsilon)$ that
		\begin{equation}\label{eq:potentialterm}
			\begin{aligned}
				\left| \int_{\mathbb{R}^2}	(V_1\cdot\nabla  U_1) \cdot |V_1|^{p-2}V_1\right| &\leq \sup_{0\leq s\leq T} \|\nabla  U_1\|_{L^{\infty}}  \int_{\mathbb{R}^2} |V_1|^{p}\,dy\\
				&\leq \epsilon \int_{\mathbb{R}^2} |V_1|^{p} \,dy.
			\end{aligned}
		\end{equation}

		We next proceed to complete the proof in three steps.

\noindent\textbf{Step 1: Bootstrap arguments and the $L^p(0,T; L^p)$ Bound.}
	Combining  \eqref{eq:est1}, \eqref{eq:force} and \eqref{eq:potentialterm}, we have 
		\begin{equation}\label{eq:diffineq}
			\begin{aligned}
&\frac{1}{p}\frac{d}{ds}  \|V_1\|_{L^p}^p+ \int_{\mathbb{R}^2} \left[(p-1)|\nabla V_1|^2 |V_1|^{p-2} + \left(\frac{9}{10}\left(\frac{1}{p}-\frac{1}{2}\right) -\epsilon\right)|V_1|^{p} \right]\, dy \\
 \leq & C(A,\lambda,p,R_0) + \left|\int_{\mathbb{R}^2} (\nabla \Pi) \cdot |V_1|^{p-2} V_1 \, dy\right|.
	\end{aligned}
	\end{equation}
	Integrating above from $0$ to $T=2 \log  \lambda$, 		thanks to the periodicity of $V_1$, we have 
	\begin{equation}\label{eq:Lpest1}
		\begin{aligned}
		&	\int_{0}^{T}  \int_{\mathbb{R}^2} \left[(p-1)|\nabla V_1|^2 |V_1|^{p-2} + \left(\frac{9}{10}\left(\frac{1}{p}-\frac{1}{2}\right) -\epsilon\right)|V_1|^{p} \right]\, dy \, ds \\
		\leq &
C(A,\lambda,p,R_0)+ \left| \int_{0}^{T}  \int_{\mathbb{R}^2} (\nabla \Pi) \cdot |V_1|^{p-2} V_1 \, dy \, ds\right|.
		\end{aligned}
	\end{equation}
	
	To estimate the last term on the right-hand side of above,	we first note that taking divergence on both sides of ${\eqref{eq:eqfordiff}}_1$, one has
	\begin{equation}
		\begin{aligned}
			-\Delta \Pi &= \textrm{ div } ( U_1\cdot \nabla V_1 + V_1\cdot \nabla  U_1 + V_1\cdot \nabla V_1 +  U_1\cdot\nabla U_1) -\nabla \cdot  F(U_0,w)\\
			&= \textrm{ div } \nabla \cdot (( U_1+V_1)   \otimes  ( U_1+V_1) ),
		\end{aligned}
	\end{equation}
	where we have used that due to \eqref{eq:eqformodifv} and $\nabla \cdot U_1=0$
	\begin{equation*}
		\nabla \cdot  F(U_0,w) = \nabla \cdot\left(\frac{\partial U_1}{\partial s}-\Delta  U_1 -\frac{1}{2} U_1 -\frac{1}{2}y\cdot \nabla  U_1\right)=0.
	\end{equation*}
	It follows from the  Calder\'{o}n–Zygmund estimates (\cite[Chapter V]{Stein93}) that there exists a constant $c(p)$ such that
	\begin{equation}\label{eq:potenest}
		\begin{aligned}
			\|\nabla \Pi\|_{L^{p}}&\leq c(p)	  \left(\| U_1\nabla  U_1\|_{L^{p}}+\|V_1\nabla  U_1\|_{L^{p}} +\| U_1 \nabla V_1\|_{L^{p}}+\|V_1\nabla V_1\|_{L^{p}} \right).
		\end{aligned}
	\end{equation}
	We  can now estimate the last term on the right-hand side of \eqref{eq:Lpest1} using \eqref{eq:Dirichletestformodifiedprofile} and \eqref{eq:potenest} together with a bootstrap argument as follows.
	Suppose that
	\begin{equation*}
		M:=\sup_{0\leq s\leq T} \|V_1(s)\|_{L^p(\mathbb{R}^2)}.
	\end{equation*}
	Then  using Lemma \ref{lem:Estimates for modified linear part}, \eqref{eq:Dirichletestformodifiedprofile} and \eqref{eq:forceest}, one has for $R_0=R_0(A,\lambda,\epsilon)$ that
	\begin{equation}\label{eq:pressterm}
		\begin{aligned}
			& \quad \left|\int_{0}^{T}\int_{\mathbb{R}^2} (\nabla \Pi) \cdot |V_1|^{p-2} V_1 \, dy \,ds\right| \leq \int_{0}^{T} \|V_1\|_{L^{p}}^{p-1} \|\nabla \Pi\|_{L^{p}}\,ds  \\
			& \leq c(p) \int_{0}^{T} \|V_1\|_{L^{p}}^{p-1} \left( \| U_1 \nabla  U_1\|_{L^{p}} + \|V_1 \nabla  U_1\|_{L^{p}} + \| U_1 \nabla V_1\|_{L^{p}} + \|V_1 \nabla V_1\|_{L^{p}}  \right)\, ds \\
			& \leq c(p)\int_{0}^{T} \|V_1\|_{L^{p}}^{p-1} \left( C(A,\lambda, p,R_0) + \epsilon \|V_1\|_{L^{p}} + \| U_1\|_{L^{\frac{2p}{2-p}}} \|\nabla V_1\|_{L^2} + \|\nabla V_1\|_{L^2} \|V_1\|_{L^{\frac{2p}{2-p}}} \right) \, ds\\
			& \leq  c(p)\int_{0}^{T} \|V_1\|_{L^{p}}^{p-1} \left( C(A,\lambda, p, R_0) + \epsilon \|V_1\|_{L^{p}} + C(A,\lambda, p, R_0) \|\nabla V_1\|_{L^2}+ C_p \|\nabla V_1\|_{L^2}^{1+\frac{p}{2}} \|V_1\|_{L^{p}}^{1-\frac{p}{2}} \right)\, ds  \\
			& \leq  c(p)\int_{0}^{T}\|V_1\|_{L^{p}}^{p-1} \left( C(A,\lambda, p, R_0) + \epsilon \|V_1\|_{L^{p}}   \right) \, ds \\
			&\quad  +c(p) \int_{0}^{T}    \|V_1\|_{L^{p}}^{2p-2} + C(A,\lambda,p, R_0)\|\nabla V_1\|_{L^2}^2 +  C_p \| V_1\|_{L^{p}}^{\frac{p}{2}}  \|\nabla V_1\|_{L^{2}}^{1+\frac{p}{2}} \, ds \\
			& \leq C(A,\lambda, p, R_0) +\left( \frac{1}{10} \left(\frac{1}{p}-\frac{1}{2}\right) + c(p) \epsilon\right) \int_{0}^{T}\|V_1\|_{L^{p}}^{p}ds + C_p\int_{0}^{T}  M^{\frac{p}{2}}  \|\nabla V_1\|_{L^{2}}^{1+\frac{p}{2}}\, ds\\
			&\leq  C(A,\lambda, p, R_0) + \left( \frac{1}{10} \left(\frac{1}{p}-\frac{1}{2}\right) +  c(p) \epsilon\right)  \int_{0}^{T}\|V_1\|_{L^{p}}^{p}ds + T^{\frac{2-p}{4}} C_p M^{\frac{p}{2}}\left(\int_{0}^{T}\|\nabla V_1\|_{L^{2}}^2 \, ds\right)^{\frac{2+p}{4}}\\
			& 		 \leq  C(A,\lambda, p, R_0) + \left( \frac{1}{10} \left(\frac{1}{p}-\frac{1}{2}\right) +  c(p) \epsilon\right) 
			\int_{0}^{T}\|V_1\|_{L^{p}}^{p}ds +  C(A,\lambda,p, R_0)M^{\frac{p}{2}},
		\end{aligned}
	\end{equation}
	where we have used Gagliardo–Nirenberg interpolation inequality 
	\begin{equation*}
		\|V_1\|_{L^{\frac{2p}{2-p}}(\mathbb{R}^2)} \leq C_p	\|\nabla V_1\|_{L^{2}(\mathbb{R}^2)}^{\frac{p}{2}} 	\|V_1\|_{L^{p}(\mathbb{R}^2)}^{1-\frac{p}{2}}, ~ 1<p<2.
	\end{equation*}
		Now we fix some $p\in(1,2)$, and let 
		\begin{equation*}
			\epsilon = \min\left\{\frac{1}{10 c(p)},\frac{1}{10}\right\}
			\cdot\left(\frac{1}{p}-\frac{1}{2}\right).
		\end{equation*}
		We then choose
	\begin{equation*}
		R_0=R_0(A,\lambda, \epsilon)  =R_0(A, \lambda, p) 
	\end{equation*}
	in \eqref{eq:defoftildev} large enough
	such that \eqref{eq:smallness} is satisfied for this $\epsilon$.
	%in Lemma \ref{lem:Estimates for modified linear part} be small enough and
	It follows that 
	\begin{equation*}
		\| U_1\|_{L^\infty}, ~	\|\nabla U_1\|_{L^\infty}\leq \epsilon \leq \min\left\{\frac{1}{10 c(p)},\frac{1}{10}\right\}
		\cdot\left(\frac{1}{p}-\frac{1}{2}\right).
	\end{equation*}
	One can now combine \eqref{eq:Lpest1} and \eqref{eq:pressterm} to have that
	\begin{equation}\label{eq:bootstrap1}
	\int_{0}^{T}  \int_{\mathbb{R}^2} |\nabla V_1|^2 |V_1|^{p-2} + |V_1|^{p} \, dy \, ds 	\leq  C(A,\lambda,p)(1+M^{\frac{p}{2}}).
	\end{equation}

		\textbf{Step 2: $L^\infty(0,T; L^p)$ Bound. }
The mean value theorem for integrals and \eqref{eq:bootstrap1} guarantees that there exists at least one specific time $s_0 \in [0, T]$ such that
\begin{equation}\label{eq:meanvalue}
\|V_1(s_0)\|_{L^p}^p \leq \frac{C(A,\lambda, p)}{T}(1+M^{\frac{p}{2}}).
\end{equation}
Integrating \eqref{eq:diffineq} from $s_0$ to $s\in[s_0, s_0+T]$ yields that
\begin{equation*}
	\begin{aligned}
		&\frac{1}{p} \left(\|V_1(s)\|_{L^p}^p - \|V_1(s_0)\|_{L^p}^p\right) + \int_{s_0}^{s} \int_{\mathbb{R}^2} (p-1)|\nabla V_1|^2 |V_1|^{p-2} + \left(\frac{1}{p} - \frac{1}{2}\right) |V_1|^p \, dy \, ds  \\
		\leq & C(A,\lambda, p)(1+M^{\frac{p}{2}}).
	\end{aligned}
\end{equation*}
Using  \eqref{eq:bootstrap1}, \eqref{eq:meanvalue} and the periodicity of $V$, one has for any $s\in[s_0,s_0+T]$ that
\begin{equation*}
	\begin{aligned}
		\|V_1(s)\|_{L^p}^p &\leq \|V_1(s_0)\|_{L^p}^p +  C(A,\lambda, p)(1+M^{\frac{p}{2}})\\
		&=  C(A,\lambda, p)(1+M^{\frac{p}{2}}).
	\end{aligned}
\end{equation*}
Since $s\in[s_0,s_0+T]$ is arbitrary, the above together with the the periodicity of $V$  shows that
\begin{equation*}
M^p=	\sup_{0\leq s\leq T}\|V_1(s)\|_{L^p(\mathbb{R}^2)}^p \leq  C(A,\lambda, p)(1+M^{\frac{p}{2}}).
\end{equation*}
Without loss of generality, we assume $M\geq 1$, otherwise, we already have a bound on $M$.
But then
\begin{equation*}
M^p\leq  2C(A,\lambda, p)M^{\frac{p}{2}},
\end{equation*}
which  implies that 
\begin{equation*}
	M\leq  C(A,\lambda, p).
\end{equation*}

\textbf{Step 3: Finish of the proof.}	
	In this step, 
we prove the $L^\infty(0,T;L^2(\mathbb{R}^2))$ estimate of $V_1$. Note that we only have 
		\begin{equation*}
			\int_0^T \|\nabla V_1(s)\|_{L^2(\mathbb R^2)}^2\,ds \leq C(A,\lambda,p),
		\end{equation*}
		 therefore one cannot  use the Gagliardo-Nirenberg inequality for each time $s\in[0,T]$ to obtain a
	$L^\infty_sL^2_y$ bound. We first derive an averaged $L^2$ bound.
		From the previous step, we know
			\begin{equation*}
		M:=\sup_{0\leq s\leq T}\|V_1(s)\|_{L^p(\mathbb R^2)}
		\leq C(A,\lambda,p),
		\quad 1<p<2.
	\end{equation*}
		By the two-dimensional Gagliardo-Nirenberg inequality, for each $s\in[0,T]$, one has
			\begin{equation*}
		\|V_1(s)\|_{L^2}
		\leq
		C_p
		\|\nabla V_1(s)\|_{L^2}^{1-\frac p2}
		\|V_1(s)\|_{L^p}^{\frac p2}.
\end{equation*}
It follows that
	\begin{equation*}
		\begin{aligned}
			\int_0^T \|V_1(s)\|_{L^2}^2\,ds
			&\leq
			C_p M^p
			\int_0^T
			\|\nabla V_1(s)\|_{L^2}^{2-p}\,ds                                      \\
			&\leq
			C_p M^p
			T^{\frac p2}
			\left(
			\int_0^T \|\nabla V_1(s)\|_{L^2}^2\,ds
			\right)^{\frac{2-p}{2}}                                                  \\
			&\leq C(A,\lambda,p).
		\end{aligned}
\end{equation*}
		Hence, there exists $s_0\in[0,T]$ such that
\begin{equation*}
		\|V_1(s_0)\|_{L^2}^2\leq C(A,\lambda,p).
\end{equation*}
		
		We next test equation \eqref{eq:eqfordiff} by $V_1$.
		We obtain
		\[
		\frac12\frac{d}{ds}\|V_1(s)\|_{L^2}^2
		+
		\|\nabla V_1(s)\|_{L^2}^2
		=
		-\int_{\mathbb R^2}(V_1\cdot\nabla U_1)\cdot V_1\,dy
		-\int_{\mathbb R^2}G\cdot V_1\,dy,
		\]
		where
		\[
		G:=U_1\cdot\nabla U_1+F(U_0,w).
		\]
		Using Lemma \ref{lem:Estimates for modified linear part} and the definition of $F(U_0,w)$, we have
		\[
		\sup_{0\leq s\leq T}\|\nabla U_1(s)\|_{L^\infty(\mathbb R^2)}
		+
		\sup_{0\leq s\leq T}\|G(s)\|_{L^2(\mathbb R^2)}
		\leq C(A,\lambda,p).
		\]
		Therefore,
		\[
		\begin{aligned}
			\frac12\frac{d}{ds}\|V_1(s)\|_{L^2}^2
			+
			\|\nabla V_1(s)\|_{L^2}^2
			&\leq
			\|\nabla U_1(s)\|_{L^\infty}
			\|V_1(s)\|_{L^2}^2
			+
			\|G(s)\|_{L^2}\|V_1(s)\|_{L^2}                                      \\
			&\leq
			C(A,\lambda,p)\|V_1(s)\|_{L^2}^2
			+
			C(A,\lambda,p).
		\end{aligned}
		\]
		By Gronwall's inequality, for any $s\in[s_0,s_0+T]$, on has
	\begin{equation*}
		\|V_1(s)\|_{L^2}^2
		\leq
		C(A,\lambda,p).
\end{equation*}
		Since $V_1$ is $T$-periodic in $s$, this implies
	\begin{equation*}
		\sup_{0\leq s\leq T}\|V_1(s)\|_{L^2}^2
		\leq C(A,\lambda,p).
	\end{equation*}
		Hence,
	\begin{equation*}
		\sup_{0\leq s\leq T}\|V_1(s)\|_{L^2(\mathbb R^2)}^2
		+
		\int_0^T\|\nabla V_1(s)\|_{L^2(\mathbb R^2)}^2\,ds
		\leq C(A,\lambda,p).
	\end{equation*}
		This completes the proof of the theorem.
	\end{proof}

	\begin{proof}[Proof of Theorem \ref{thm:energyestkey}]
   It follow from the definitions of $V$ and $V_1$ in \eqref{eq:deforre} and \eqref{eq:modre} that  
		the difference between $V$ and  $V_1$ is 
		\begin{equation*}
			V - V_1 =U_1 -U_0 = (\eta_{R_0}-1)U_0 +w.
		\end{equation*}
		It then follows from Lemmas \ref{lem:Linftyestforheat} and  \ref{lem:Estimates for modified linear part} and 
		Proposition \ref{thm:energyest} that for $p\in (1,2]$, 
		\begin{equation*}
			\sup_{0\leq s\leq T}\|V(\cdot,s) \|_{L^{p}(\mathbb{R}^2)}\leq C(A,\lambda, p), ~ \textrm{and  }\|\nabla V\|_{L^2(0,T;L^{2}(\mathbb{R}^2))}  \leq C(A,\lambda).
		\end{equation*}		
To prove the rest of Theorem \ref{thm:energyestkey}, it is sufficient to prove
\begin{equation*}
			\sup_{0\leq s\leq T} \|V(\cdot,s)\|_{H^{1}(\mathbb{R}^2)}  \leq C(A,\lambda),
		\end{equation*}
        which we formulate it as Proposition \ref{pro:uniform-H1} below.
		This finishes the proof of Theorem \ref{thm:energyestkey}.   
	\end{proof}

     \begin{proposition}[Uniform  $H^1$ estimate]
	\label{pro:uniform-H1}
	Under the same assumption of Theorem \ref{thm:energyestkey},
	we have
	\begin{equation}
		\label{eq:uniform-H1-fixed-points}
		\sup_{0\leq s\leq T}
		\|V(s)\|_{H^1(\mathbb R^2)}
		\leq C(A,\lambda).
	\end{equation}
  \end{proposition}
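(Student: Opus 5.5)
The plan is to upgrade the period-averaged Dirichlet bound to a pointwise-in-$s$ bound by an energy estimate for the vorticity. As in Proposition \ref{thm:energyest}, I will work with the modified remainder $V_1=U-U_1$ for a fixed $p\in(1,2)$, say $p=3/2$, with $R_0=R_0(A,\lambda)$ now fixed. Afterwards I transfer to $V$ using $V-V_1=(\eta_{R_0}-1)U_0+w$.

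\textbf{Transfer to $V$.} The $L^2$ part is already done: $\sup_s\|V_1(s)\|_{L^2}\le C(A,\lambda)$ by Proposition \ref{thm:energyest}. Moreover $(\eta_{R_0}-1)U_0$ is supported in $B_{2R_0}$ and bounded by Lemma \ref{lem:Linftyestforheat}, and $w=O(|y|^{-2})$ by \eqref{eq:estw}. Together with \eqref{eq:estnalbav1} and Proposition \ref{lem:L2bound on a period}, this gives $\sup_s\|V-V_1\|_{H^1}\le C(A,\lambda)$. So it suffices to prove $\sup_s\|\nabla V_1(s)\|_{L^2}\le C$. Since $\nabla\cdot V_1=0$, this is the same as bounding $\|\omega(s)\|_{L^2}$, where $\omega=\operatorname{curl}V_1$.

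\textbf{Vorticity estimate.} Taking the curl of \eqref{eq:eqfordiff}, and using $\operatorname{curl}(y\cdot\nabla V_1)=y\cdot\nabla\omega+\omega$, I get
\[
\partial_s\omega-\Delta\omega-\omega-\tfrac12 y\cdot\nabla\omega+(U_1+V_1)\cdot\nabla\omega+V_1\cdot\nabla\Omega_1=-\operatorname{curl}\bigl(U_1\cdot\nabla U_1+F(U_0,w)\bigr),
\]
with $\Omega_1=\operatorname{curl}U_1$. There is no stretching term, since we are in two dimensions. I then test with $\omega$ and handle the terms as follows.
\begin{itemize}
\item The drift gives $\int(-\tfrac12 y\cdot\nabla\omega)\,\omega=\tfrac12\|\omega\|_{L^2}^2$. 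Combined with $-\|\omega\|^2$ this leaves only $-\tfrac12\|\omega\|_{L^2}^2$ on the left, a lower-order term moved to the right.
\item The transport term vanishes, because $U_1+V_1$ is divergence free.
\item For the term $V_1\cdot\nabla\Omega_1$, I write $V_1\cdot\nabla U_1$ in the nonlinearity and move the curl onto $\omega$. This bounds it by $\|\nabla U_1\|_{L^\infty}\|V_1\|_{L^2}\|\nabla\omega\|_{L^2}$.
\item For the forcing, I also integrate by parts: $\bigl|\int\operatorname{curl}G\,\omega\bigr|\le\|G\|_{L^2}\|\nabla\omega\|_{L^2}$, where $G=U_1\cdot\nabla U_1+F(U_0,w)$. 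Here $\sup_s\|G\|_{L^2}\le C$ by \eqref{eq:estv1}, \eqref{eq:estw}, \eqref{eq:estsw} and Lemma \ref{lem:Linftyestforheat}, exactly as in Step 3 of Proposition \ref{thm:energyest}.
\end{itemize}
After Young's inequality this yields
\[
\frac{d}{ds}\|\omega\|_{L^2}^2+\|\nabla\omega\|_{L^2}^2\le \|\omega\|_{L^2}^2+C(A,\lambda).
\]

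\textbf{Closing by periodicity.} From \eqref{eq:Dirichletestformodifiedprofile}, $\int_0^T\|\omega\|_{L^2}^2\,ds\le C$. Hence there is a good time $s_0\in[0,T]$ with $\|\omega(s_0)\|_{L^2}^2\le C/T$. Gronwall's inequality on $[s_0,s_0+T]$ then gives $\|\omega(s)\|_{L^2}\le C(A,\lambda)$ there, and periodicity in $s$ yields the bound for all $s$. This is the same good-slice-plus-periodicity argument used in Steps 2 and 3 above.

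\textbf{Main obstacle.} The step I expect to be the real difficulty is justifying these manipulations: differentiating $\|\omega\|^2$ in $s$ and integrating by parts on $\mathbb R^2$, including the drift term $y\cdot\nabla\omega$ at infinity. I would handle this by using interior smoothness of two-dimensional Navier--Stokes solutions for $t>0$. If a priori decay of $\omega$ is not available, I would multiply by a radial cutoff $\phi_R$ and let $R\to\infty$. The commutator from the drift, $\tfrac14\int(y\cdot\nabla\phi_R)\,\omega^2$, has a favorable or controllable sign and is supported where $|y|\sim R$. The remaining cutoff errors go to zero using $\omega\in L^2_sL^2_y$ and $V_1\in L^\infty_sL^2_y$.
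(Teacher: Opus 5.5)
Your proposal is correct and follows the paper's proof: take the curl of the remainder equation, test with $\omega$ (the drift leaves only $-\tfrac12\|\omega\|_{L^2}^2$ and the transport term vanishes), then close with a good time slice from the period-averaged Dirichlet bound, Gronwall, and periodicity. The only difference is that you work with $V_1=U-U_1$ and transfer back, whereas the paper works directly with $V$ and $\omega_0=\operatorname{curl}U_0$, using $\|\nabla\omega_0\|_{L^\infty}+\|U_0\cdot\nabla\omega_0\|_{L^2}\le C(A,\lambda)$ from Lemma \ref{lem:Linftyestforheat}; this is simpler because no smallness of the linear part is needed once Gronwall is used, and your bound for $\int V_1\cdot\nabla\Omega_1\,\omega$ is cleanest via $V_1\cdot\nabla\Omega_1=\nabla\cdot(\Omega_1V_1)$.
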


\begin{proof}
	Let
	\[
	\omega=\partial_1V_2-\partial_2V_1,
	\quad
	\omega_0=\partial_1U_{0,2}-\partial_2U_{0,1}.
	\]
	Taking the curl of \eqref{eq:homotopy-problem-LS}, we obtain
	\begin{equation}
		\label{eq:vorticity}
		\begin{aligned}
			\partial_s\omega-\Delta\omega-\omega
			-\frac12y\cdot\nabla\omega
			+
			(U_0+V)\cdot\nabla\omega
			=
			-( U_0+V)\cdot\nabla\omega_0.
		\end{aligned}
	\end{equation}
	Multiplying \eqref{eq:vorticity} by $\omega$ and integrating
	over $\mathbb R^2$, we have
	\begin{equation*}
		\begin{aligned}
			\frac12\frac{d}{ds}\|\omega(s)\|_{L^2}^2
			+
			\|\nabla\omega(s)\|_{L^2}^2
			-
			\frac12\|\omega(s)\|_{L^2}^2
			=
			-
			\int_{\mathbb R^2}
			( U_0+V)\cdot\nabla\omega_0\,\omega\,dy.
		\end{aligned}
	\end{equation*}
	It follows from Lemma \ref{lem:Linftyestforheat} that
	\begin{equation*}
		\sup_{0\leq s\leq T}
		\left(
		\|\nabla\omega_0(s)\|_{L^\infty}
		+
		\|U_0(s)\cdot\nabla\omega_0(s)\|_{L^2}
		\right)
		\leq C(A,\lambda).
	\end{equation*}
	Hence, using Theorem \ref{thm:energyestkey}, one obtains
	\begin{align*}
		&
		\left|
		\int_{\mathbb R^2}
		( U_0+V)\cdot\nabla\omega_0\,\omega\,dy
		\right|\\
		\leq&
		\|U_0\cdot\nabla\omega_0\|_{L^2}\|\omega\|_{L^2}
		+
		\|\nabla\omega_0\|_{L^\infty}
		\|V\|_{L^2}\|\omega\|_{L^2}\\
		\leq&
		C(A,\lambda)
		\left(
		1+\|\omega\|_{L^2}
		\right).
	\end{align*}
	Consequently, we have
	\begin{equation}
		\label{eq:vorticity-Gronwall}
		\frac{d}{ds}\|\omega(s)\|_{L^2}^2
		\leq
		C(A,\lambda)
		\left(
		1+\|\omega(s)\|_{L^2}^2
		\right).
	\end{equation}
	On the other hand,  we have
	\begin{equation*}
		\int_0^T\|\omega(s)\|_{L^2}^2\,ds
		\leq
		C\int_0^T\|\nabla V(s)\|_{L^2}^2\,ds
		\leq C(A,\lambda).
	\end{equation*}
	Therefore there exists $s_0\in[0,T]$ such that
	\[
	\|\omega(s_0)\|_{L^2}^2\leq C(A,\lambda).
	\]
	Applying Gronwall's inequality to
	\eqref{eq:vorticity-Gronwall} on $[s_0,s_0+T]$,
	and using the periodicity of $V$, one has
	\[
	\sup_{0\leq s\leq T}
	\|\omega(s)\|_{L^2}
	\leq C(A,\lambda).
	\]
	Since $V$ is divergence-free, we have
	\begin{equation*}
		\|\nabla V(s)\|_{L^2}
		=
		\|\omega(s)\|_{L^2}.
	\end{equation*}
Combining this with the uniform $L^2$ estimate of $V$ proves
	\eqref{eq:uniform-H1-fixed-points}.
\end{proof}

	\section{Construction of  DSS solutions with locally $C^2$ DSS data}
	\label{sec:passage-smooth-to-L2}
	
	In this section, we construct  DSS solutions when   the initial data is discretely
	self-similar  and locally $C^2$ by using the Leray-Schauder degree argument. The advantage here is that when the initial data is 
	locally $C^2$,  one can have faster decay for the linear part, i.e., the solution to the heat equation. This helps to establish the weighted estimates for the remainder, in which the faster decay of the linear part is needed.  This weighted estimates is a key ingredient in the proof  of the necessary compactness of the Leray-Schauder map. 
	
	Let \(u_0\in C^2_{\rm loc}(\mathbb R^2\setminus\{0\})\) be divergence-free,
	\(\lambda\)-DSS, with zero-flux, denote that
	\begin{equation*}
		B:=\|u_0\|_{C^2(\mathcal{A})},
		\textrm{ where }
		\mathcal{A}=\{1\leq |x|<\lambda\}.
	\end{equation*}
	Recall \(T=2 \log \lambda\).
	For locally $C^2$ DSS data, using the estimates for heat kernel, one can show that
	\begin{equation}
		\label{eq:U0-decay-for-LS}
		\sup_{0\leq s\leq T}
		\left|\nabla^kU_0(y,s)\right|
		\leq
		C(B,k)(1+|y|)^{-(1+k)},
		\quad k=0,1,2.
	\end{equation}
	This estimate is standard, one may refer to \cite[Lemma 2.1]{gui2026forward} for a proof.
	Recall that
	\begin{equation}\label{eq:differencenew}
		\left\{
		\begin{aligned}
			&\frac{\partial V}{\partial s}-\Delta V -\frac{1}{2}V -\frac{1}{2}y\cdot \nabla V+  (U_0+V)\cdot \nabla V + V\cdot \nabla  U_0 +\nabla \Pi=- U_0\cdot\nabla  U_0, \\
			&  \nabla \cdot V=0,
		\end{aligned} 
		\right. \textrm{ in } \mathbb{R}^2.
	\end{equation}

	\subsection{Weighted energy estimates for DSS solutions with locally $C^2$ initial data}
	\label{sec:weightedest}
\begin{proposition}[Weighted $H^1$ Estimate] \label{prop:weighted_H1est}
	Under the same assumption of Theorem \ref{thm:energyestkey} and that $B=\|u_0\|_{C^2(\mathcal{A})}<\infty$, 
	we have
	\begin{equation*} 
	\sup_{0\le s\le T}\||y|V(s)\|_{L^2}^2 + 	\| \sqrt{1+|y|^2} V \|_{L^2(0,T;H^1(\mathbb{R}^2))} \le C(B,\lambda).
	\end{equation*}
\end{proposition}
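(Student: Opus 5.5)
The plan is a direct weighted energy estimate: test \eqref{eq:differencenew} with $|y|^2V$. The unweighted bounds of Theorem \ref{thm:energyestkey} (uniform $H^1$ and $L^p$ control of $V$) handle all the lower-order terms. The decay \eqref{eq:U0-decay-for-LS}, valid for $C^2_{\rm loc}$ data, makes every term involving $U_0$ harmless after multiplication by $|y|^2$.

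The key observation is that the drift is coercive at this weight. In two dimensions $\nabla\cdot(y|y|^2)=4|y|^2$, so
\[
\int_{\mathbb R^2}\Bigl(-\tfrac12V-\tfrac12y\cdot\nabla V\Bigr)\cdot|y|^2V\,dy=-\tfrac12\||y|V\|_{L^2}^2+\tfrac14\int_{\mathbb R^2} 4|y|^2|V|^2\,dy=\tfrac12\||y|V\|_{L^2}^2 .
\]
This is in contrast with the $L^2$ cancellation \eqref{eq:L2criticalityintro}. The remaining terms are handled as follows.

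\textbf{Laplacian.} It gives $\||y|\nabla V\|_{L^2}^2-2\|V\|_{L^2}^2$ after one integration by parts. The error is bounded by $C(A,\lambda)$ via Theorem \ref{thm:energyestkey}.

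\textbf{Transport.} The term $(U_0+V)\cdot\nabla V$ yields $-\int y\cdot(U_0+V)|V|^2$.
\begin{itemize}
\item Since $|y||U_0|\le C(B)$, the $U_0$-part is bounded by $C\|V\|_{L^2}^2$.
\item For the $V$-part, use Ladyzhenskaya's inequality:
\[
\int|y||V|^3\le \||y|V\|_{L^2}\|V\|_{L^4}^2\le C\||y|V\|_{L^2}\|V\|_{L^2}\|\nabla V\|_{L^2}\le \delta\||y|V\|_{L^2}^2+C(A,\lambda).
\]
\end{itemize}

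\textbf{Stretching.} Since $|y|^2|\nabla U_0|\le C(B)$, the term $V\cdot\nabla U_0$ contributes at most $C\|V\|_{L^2}^2$.

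\textbf{Forcing.} Here $|y||U_0\cdot\nabla U_0|\le C(1+|y|)^{-2}\in L^2$. Hence the forcing is bounded by $\delta\||y|V\|^2_{L^2}+C(B)$.

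\textbf{Pressure.} Integrating by parts gives $-2\int\Pi\, y\cdot V$. Write $\Pi=R_iR_j\bigl((U_0+V)_i(U_0+V)_j\bigr)$. By Calder\'on--Zygmund,
\[
\|\Pi\|_{L^2}\le C\|U_0+V\|_{L^4}^2\le C(A,B,\lambda),
\]
using $U_0\in L^4$ (decay $|y|^{-1}$) and $V\in H^1$. So this term is at most $\delta\||y|V\|_{L^2}^2+C$.

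Choosing $\delta$ small, we get the differential inequality
\[
\frac{d}{ds}\||y|V\|_{L^2}^2+\||y|\nabla V\|_{L^2}^2+\tfrac12\||y|V\|_{L^2}^2\le C(B,\lambda).
\]
The bounds then follow from periodicity.
\begin{itemize}
\item Integrating over one period kills the time derivative. This gives $\int_0^T(\||y|\nabla V\|^2+\||y|V\|^2)\,ds\le C$.
\item For the sup bound, set $f(s)=\||y|V(s)\|_{L^2}^2$, so $f'+\tfrac12 f\le C$. Hence $f(s)\le e^{-(s-s_1)/2}f(s_1)+2C$. Taking $s_1=s-kT$, using periodicity and letting $k\to\infty$ gives $f\le 2C$.
\end{itemize}
Alternatively, pick a good time slice $s_0$ as in Step 2 of Proposition \ref{thm:energyest}. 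Combined with Theorem \ref{thm:energyestkey}, this yields the full $\sqrt{1+|y|^2}$-weighted $L^2_sH^1_y$ bound.

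The main obstacle is rigor rather than algebra. A priori we do not know that $\||y|V\|_{L^2}$ is finite, so the test is only formal. I would first use the truncated weight $\phi_R(y)=|y|^2\chi(y/R)$, with $\chi$ a cutoff. The drift then produces $\tfrac12\int\phi_R|V|^2$ plus an error $\tfrac14\int(y\cdot\nabla\phi_R-2\phi_R)|V|^2$ with $|y\cdot\nabla\phi_R-2\phi_R|\lesssim|y|^2\mathbf 1_{R<|y|<2R}$. This error is of the same size as the good term, so it must be absorbed carefully. I expect to handle it by choosing $\chi$ radially decreasing: then $y\cdot\nabla\chi\le0$, the error has a favorable sign, and the coercivity survives uniformly in $R$. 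The other cutoff errors, those involving $\nabla\phi_R$ in the Laplacian, transport and pressure terms, are of size $|y|\mathbf 1_{|y|\sim R}$. They are controlled by the same bounds as above, uniformly in $R$.
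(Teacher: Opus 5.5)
Your formal computation is essentially the paper's proof. You use the same multiplier $|y|^2V$, the same drift identity giving $\tfrac12\||y|V\|_{L^2}^2$, and the same term-by-term bounds. The differences are minor: you use the uniform $H^1$ bound of Theorem~\ref{thm:energyestkey} to make the right-hand side constant and close with $f'+\tfrac12 f\le C$. The paper instead keeps $C\|\nabla V\|_{L^2}^2$ on the right and closes with a period average plus a good time slice.

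\textbf{Gap in the truncation step.} The problem is in the step you single out as the main obstacle: your sign for the truncated drift term is backwards. For any weight $\phi$, the two linear drift terms contribute exactly
\[
\int_{\mathbb R^2}\Bigl(-\tfrac12V-\tfrac12y\cdot\nabla V\Bigr)\cdot\phi V\,dy=\tfrac14\int_{\mathbb R^2}(y\cdot\nabla\phi)\,|V|^2\,dy
\]
to the left-hand side. With $\phi_R=|y|^2\chi_R$ and $\chi_R=\chi(\cdot/R)$ radially decreasing, the extra piece $\tfrac14\int|y|^2(y\cdot\nabla\chi_R)|V|^2$ is \emph{nonpositive}. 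So it destroys coercivity rather than helping: the drift carries mass inward from $\{\phi_R=0\}$ into the support of $\phi_R$. The ratio $|y\cdot\nabla\chi_R|/\chi_R$ is necessarily unbounded near the outer edge of any cutoff, so this term cannot be absorbed by $\tfrac12\int\phi_R|V|^2$. Its size is about $\int_{R<|y|<2R}|y|^2|V|^2$, which is the tail of the very quantity being estimated and is only known to be $O(R^2)$. Hence nothing closes uniformly in $R$. Your sup-closure ($f\le e^{-kT/2}f+2C$, then $k\to\infty$) has the same flaw: it presupposes $f(s)<\infty$, which is exactly what is not known a priori.

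\textbf{How to repair it.} Truncate with a bounded, radially \emph{nondecreasing} weight, such as the $|y|^2/(1+\epsilon|y|^2)$ the paper indicates (or $\min\{|y|^2,R^2\}$). Then $\int\phi_\epsilon|V|^2\le\epsilon^{-1}\|V\|_{L^2}^2<\infty$. Also $y\cdot\nabla\phi_\epsilon=2w_\epsilon$ with $w_\epsilon=|y|^2(1+\epsilon|y|^2)^{-2}\ge0$, so the drift yields $\tfrac12\int w_\epsilon|V|^2$. All your error terms absorb into this uniformly in $\epsilon$, because $|\nabla\phi_\epsilon|\le2\sqrt{w_\epsilon}$, $|\Delta\phi_\epsilon|\le C$, $\phi_\epsilon|\nabla U_0|\le C$ and $\phi_\epsilon|U_0\cdot\nabla U_0|\le C\sqrt{w_\epsilon}\,|y|(1+|y|)^{-3}$.

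However, $w_\epsilon$ is not comparable to $\phi_\epsilon$ uniformly in $\epsilon$, so $f_\epsilon'+\tfrac12 f_\epsilon\le C$ is lost. You must then close as the paper does:
\begin{enumerate}[label=(\roman*)]
\item Integrate over one period; the time derivative drops out by periodicity and finiteness.
\item Let $\epsilon\to0$ by monotone convergence to get $\int_0^T(\||y|V\|_{L^2}^2+\||y|\nabla V\|_{L^2}^2)\,ds\le C$.
\item Choose a good slice $s_0$ and integrate the $\phi_\epsilon$-inequality forward from $s_0$, dropping the coercive terms, before sending $\epsilon\to0$.
\end{enumerate}
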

	
	\begin{proof}
		Throughout this proof, we use $C$ to denote a constant depending on $B=\|u_0\|_{C^2(\mathcal{A})}$  that may be different from line to line.
	 For simplicity,
		we test \eqref{eq:differencenew} by $|y|^2V$.	 Rigorously, one should use  $\phi = \frac{|y|^2}{1+\epsilon|y|^2} V$ as a test function and
			pass to the limit $\epsilon \to 0$. We leave this to interested readers.
	 Direct calculations using  Theorem \ref{thm:energyestkey}  give
	\begin{align*}
		-\int_{\mathbb{R}^2} \Delta V \cdot (|y|^2 V) \, dy 
		&= \int_{\mathbb{R}^2} |y|^2 |\nabla V|^2 \, dy + \int_{\mathbb{R}^2} y \cdot \nabla|V|^2 \, dy \\
		&= \| |y| \nabla V \|_{L^2}^2 - 2 \|V\|_{L^2}^2
		\geq \| |y| \nabla V \|_{L^2}^2-C.
	\end{align*}
	Moreover,
	\begin{equation*}\label{eq:drift_coercivity}
		\begin{aligned}
			& \quad \int_{\mathbb{R}^2} \left( -\frac{1}{2} V - \frac{1}{2} y \cdot \nabla V \right) \cdot (|y|^2 V) \, dy\\
			&= -\frac{1}{2} \int_{\mathbb{R}^2}  |y|^2 |V|^2 \, dy + \frac{1}{4} \int_{\mathbb{R}^2} |V|^2 \nabla \cdot (y |y|^2) \, dy\\
			&= -\frac{1}{2} \int_{\mathbb{R}^2}  |y|^2 |V|^2 \, dy +  \int_{\mathbb{R}^2} |V|^2  |y|^2 \, dy=\frac{1}{2} \int_{\mathbb{R}^2}  |y|^2 |V|^2 \, dy. 
		\end{aligned}	
	\end{equation*}
	Using $u_0$ is DSS and  $B=\|u_0\|_{C^2(\mathcal{A})}<\infty$,
	 one indeed has that $| U_0(y)| \le C(B)(1+|y|)^{-1}$.
	Hence, together with Theorem \ref{thm:energyestkey}, it follows that
	\begin{equation*}
		\begin{aligned}
			&\quad\int_{\mathbb{R}^2} (U_0\cdot\nabla V + V \cdot \nabla U_0) \cdot (|y|^2 V) \, dy\\
			&= -\int_{\mathbb{R}^2} U_0\cdot y|V|^2+V\cdot\nabla (|y|^2V)\cdot U_0 \, dy\\
			&\leq \int_{\mathbb{R}^2} |U_0\cdot y|  \cdot|V|^2 + 10|V| \cdot|y|\cdot|V| \cdot|U_0| + 10|V|\cdot|y|^2 \cdot |\nabla V|\cdot |U_0|\\
			& \leq \int_{\mathbb{R}^2} C  |V|^2 + 10|V| \cdot |y| \cdot |\nabla V|\\
			&\leq C\int_{\mathbb{R}^2}   (|V|^2 +  |\nabla V|^2) \, dy  + \frac{1}{10}\int_{\mathbb{R}^2}  |y|^2| V|^2 \, dy\\
			&\leq C + C\int_{\mathbb{R}^2}  |\nabla V|^2 \, dy   + \frac{1}{10}\int_{\mathbb{R}^2}  |y|^2| V|^2 \, dy.
		\end{aligned}
	\end{equation*}

For the nonlinear term $V\cdot \nabla V$, 
one has
	\begin{equation*}
		\begin{aligned}
			\int_{\mathbb{R}^2} (V\cdot \nabla V)\cdot |y|^2 V& = 	\frac{1}{2}\int_{\mathbb{R}^2}|y|^2   V\cdot \nabla |V|^2 = -	\int_{\mathbb{R}^2}(y\cdot V) |V|^2\\
			&\leq  C  \int_{\mathbb{R}^2}  | V|^4 \, dy + \frac{1}{10}\int_{\mathbb{R}^2}  |y|^2| V|^2 \, dy\\
			&\leq C  \|V\|_{L^2(\mathbb{R}^2)}^2\|\nabla V\|_{L^2(\mathbb{R}^2)}^2 + \frac{1}{10}\int_{\mathbb{R}^2}  |y|^2| V|^2 \, dy\\
			&\leq  C\|\nabla V\|_{L^2(\mathbb{R}^2)}^2 + \frac{1}{10}\int_{\mathbb{R}^2}  |y|^2| V|^2 \, dy.
		\end{aligned}
	\end{equation*}		
For the pressure term, the Calder\'{o}n–Zygmund  estimates imply
	\begin{equation*}
		\| q\|_{L^{2}}\leq C	  \left(\|U_0 \|_{L^{4}} + \|V\|_{L^{4}}
		\right)^2 \leq C	  \left(\|U_0 \|_{L^{4}} + \|V\|_{L^2}^{\frac{1}{2}} \|\nabla V\|_{L^2}^{\frac{1}{2}}
		\right)^2  \leq C + C\|\nabla V\|_{L^2(\mathbb{R}^2)}.
	\end{equation*}
	Hence, using H\"older inequality yields
	\begin{equation*}
		\begin{aligned}
			\int_{\mathbb{R}^2} \nabla q \cdot |y|^2 V& = 	-\int_{\mathbb{R}^2} 2q V\cdot y\leq \frac{1}{10}\int_{\mathbb{R}^2} |y|^2|V|^2 + C +  C\|\nabla V\|_{L^2(\mathbb{R}^2)}^2.
		\end{aligned}
	\end{equation*}
	Using  \eqref{eq:U0-decay-for-LS},
	one has
	\begin{equation*}
		\begin{aligned}
			 \int_{\mathbb{R}^2} (-U_0\cdot \nabla U_0 ) \cdot |y|^2 V
			& \leq C \int_{\mathbb{R}^2} |\nabla U_0|^2 
			+ \frac{1}{10} \int_{\mathbb{R}^2}  |y|^2|V|^2 \\
			& \leq C+  \frac{1}{10} \int_{\mathbb{R}^2}  |y|^2|V|^2.
		\end{aligned}
	\end{equation*}
	Combining all the above gives
	\begin{equation}\label{eq:weightedest1}
		\frac{1}{2}\frac{d}{ds}\int_{\mathbb{R}^2}|V|^2|y|^2 dy + 	\frac{1}{10}\int_{\mathbb{R}^2}|V|^2|y|^2 dy + \int_{\mathbb{R}^2}|\nabla V|^2|y|^2 dy\leq C(B,\lambda) + C\int_{\mathbb{R}^2}  |\nabla V|^2 \, dy.
	\end{equation}
	
		%\textbf{Step 1: Weighted $L^2(0,T; H^1)$ Bound of $V$. }
		Integrating \eqref{eq:weightedest1} from $[0,T]$, using the periodicity of $V$ and Theorem \ref{thm:energyestkey}, one has
		\begin{equation}\label{eq:integralest} 
			\begin{aligned}
	\frac{1}{10}\int_{0}^{T}\int_{\mathbb{R}^2}|V|^2|y|^2 dy + \int_{0}^{T} \int_{\mathbb{R}^2}|\nabla V|^2|y|^2 dy\leq C(B,\lambda).
			\end{aligned}
		\end{equation}
		
	%	\textbf{Step 2: Weighted  $L^\infty(0,T; L^2)$ Bound of $V$. }
	The mean value theorem for integrals and \eqref{eq:integralest} guarantees that there exists some $s_0 \in [0, T]$ such that
	\begin{equation*}
		\||y|V(s_0)\|_{L^2}^2 \leq C(B,\lambda).
	\end{equation*}
	Integrating \eqref{eq:weightedest1} from $s_0$ to $s\in[s_0,s_0+T]$ yields that
	\begin{equation*}
		\begin{aligned}
			&\frac{1}{2} \left(\||y|V(s)\|_{L^2}^2 - \||y|V(s_0)\|_{L^2}^2\right) \leq C(B,\lambda).
		\end{aligned}
	\end{equation*}
	This together with \eqref{eq:integralest} and the periodicity of $V$ implies that
	\begin{equation*}
		\sup_{0\le s\le T}\||y|V(s)\|_{L^2}^2 \leq C(B,\lambda).
	\end{equation*}
This finishes the proof of the proposition.
		\end{proof}

		 The above weighted energy estimates are indeed sufficient for us to  construct discretely self-similar solutions. The following theorem is the main result in this section.
		\begin{theorem}[Existence of discretely self-similar solutions with $C^2_{loc}$ initial data]
		Let $u_0(x)\in C^{2}_{loc}(\R^2\setminus\{0\})$ be a divergence-free, $\lambda$-DSS vector field  in $\R^2 \setminus \{0\}$, such that
		\begin{equation*}
			\int_{ \mathbb{S}^1} u_0\cdot n d\sigma = 0 .
		\end{equation*}
		Denote 
		$B=\|u_0\|_{C^{2}(\mathcal{A})}<+\infty$, where $\mathcal{A} =\{x \in \R^2 : 1 \leq |x| < \lambda\}$.
		Then	there exists at least one discretely  self-similar solution $u (x,t)$ to the Cauchy problem \eqref{eq:NS}-\eqref{eq:initial}.
		\end{theorem}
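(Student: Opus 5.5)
The plan is a Leray--Schauder degree argument for the periodic perturbation problem. We work with the cut-off profile $U_1=\eta_{R_0}U_0+w$ of Lemma \ref{lem:Estimates for modified linear part}, or directly with $U_0$, which for $C^2_{loc}$ data obeys \eqref{eq:U0-decay-for-LS}. We seek a $T$-periodic divergence-free $V$ solving \eqref{eq:differencenew}. The fixed-point space is
\[
X=\Bigl\{V\in L^2(\mathbb T_T;H^1_\sigma(\mathbb R^2)):\ \sqrt{1+|y|^2}\,V\in L^2(\mathbb T_T;L^2)\Bigr\},
\]
with the natural norm.

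The map is built from the linear periodic Stokes--Leray operator
\[
\mathcal L V=\partial_sV-\Delta V-\tfrac12V-\tfrac12y\cdot\nabla V+\nabla\Pi .
\]
Its bad zeroth-order part is exactly neutral in $L^2$ by \eqref{eq:L2criticalityintro}. To make it invertible on periodic functions, I would first solve the shifted problem $\mathcal L V+\mu V=f$ with $\mu>0$ fixed. A Galerkin scheme on $\mathbb T_T\times\mathbb R^2$ gives this, with energy and weighted $|y|^2$ estimates exactly as in Proposition \ref{prop:weighted_H1est}. Then I set
\[
\mathcal K_\sigma(V)=(\mathcal L+\mu)^{-1}\Bigl[\mu V-\sigma\bigl(U_0\cdot\nabla V+V\cdot\nabla U_0+V\cdot\nabla V+U_0\cdot\nabla U_0\bigr)\Bigr],\qquad \sigma\in[0,1].
\]
Fixed points of $\mathcal K_1$ are the desired periodic $V$. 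Fixed points of $\mathcal K_\sigma$ solve \eqref{eq:differencenew} with $u_0$ replaced by $\sigma u_0$ in the nonlinear coupling; in particular $\mathcal K_0$ has only the trivial fixed point. Once $V$ is found, $U=U_0+V$ and $u(x,t)=t^{-1/2}U(x/\sqrt t,\log t)$ is the DSS solution.

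\textbf{Step 1: a priori bounds uniform in $\sigma$.} I redo Theorem \ref{thm:energyestkey} and Proposition \ref{prop:weighted_H1est} for the $\sigma$-problem. Every constant depends on $\sigma A\le A$ and $\sigma B\le B$, and the cancellation identity \eqref{eq:crucialidentity} persists with $\sigma U_0$. This gives
\[
\|V\|_{L^\infty_sH^1}+\bigl\|\sqrt{1+|y|^2}\,V\bigr\|_{L^2_sH^1}\le C(B,\lambda)
\]
for all fixed points, so a large ball in $X$ contains no fixed points on its boundary.

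\textbf{Step 2: compactness of $\mathcal K_\sigma$ on $X$.} This is where I expect the main obstacle. The weight $\sqrt{1+|y|^2}$ controls tails uniformly. Parabolic regularity of $(\mathcal L+\mu)^{-1}$ gains $\partial_sV\in L^2H^{-1}$ and local $H^2$ regularity. Aubin--Lions on balls then yields precompactness in $L^2_sL^2$ with weight, and one step further gives precompactness in $X$.

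The nonlinear term $V\cdot\nabla V$ must map bounded sets of $X$ into a space where $(\mathcal L+\mu)^{-1}$ is smoothing. I would place it in $L^2_s L^{4/3}$, or write it as $\nabla\cdot(V\otimes V)\in L^2_sH^{-1}$ using Ladyzhenskaya. The worry is whether the weighted estimate for the inverse tolerates an $H^{-1}$ forcing. I would verify this first by testing with $(1+|y|^2)V$. The commutator terms are $\nabla(1+|y|^2)\cdot(V\otimes V)$, bounded by $|y||V|^2$, and they close using the weighted $L^2$ bound together with the $L^4$ bound.

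Continuity in $\sigma$ is immediate. Leray--Schauder then yields a fixed point at $\sigma=1$.

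\textbf{Step 3: recovering the initial data.} I check that $u$ is an energy perturbed solution. The $L^2$ difference $u-e^{t\Delta}u_0$ in physical variables equals $\int_{K/\sqrt t}|V(y,\log t)|^2\,dy$. This is bounded by $t\,\||y|V\|_{L^2}^2/\mathrm{dist}(0,K)^2\to0$, which gives Definition \ref{def:energy persol}(ii). The bounds in (i) follow by rescaling the uniform $H^1$ bound.
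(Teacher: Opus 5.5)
\textbf{There is a genuine gap in Step 2, caused by your choice of $X$.} Your Steps 1 and 3 are fine in substance. Step 3 is even a nice shortcut: for $C^2_{loc}$ data, the bound $\sup_s\||y|V(s)\|_{L^2}\le C(B,\lambda)$ of Proposition \ref{prop:weighted_H1est} gives the initial condition directly.

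Your $X$ is a weighted version of the energy space, which is exactly scale-invariant for two-dimensional Navier--Stokes. In this space the quadratic part of $\mathcal K_\sigma$ is neither shown to map $X$ into $X$, nor is it compact.

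\emph{The map is not shown to land in $X$.} For $V$ merely in $L^2(\mathbb T_T;H^1)$, Ladyzhenskaya gives $\|V\otimes V\|_{L^2}\le C\|V\|_{L^2}\|\nabla V\|_{L^2}$. So $V\otimes V\in L^1_sL^2$ and $V\cdot\nabla V\in L^1_sL^r$ for $r<2$, which is only $L^1$ in $s$. Your suggested placements need more than $X$ provides. Putting $V\cdot\nabla V\in L^2_sL^{4/3}$ requires $\sup_s\|V(s)\|_{L^4}$. Putting $\nabla\cdot(V\otimes V)\in L^2_sH^{-1}$ requires $\sup_s\|V(s)\|_{L^2}$. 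Neither is part of the $X$-norm, and $(\mathcal L+\mu)^{-1}$ applied to an $L^1_sH^{-1}$ forcing does not in general lie in $L^2_sH^1$.

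\emph{Adding $L^\infty_sL^2$ does not help.} The forcing is then in $L^2_sH^{-1}$, but $(\mathcal L+\mu)^{-1}$ returns only $L^\infty_sL^2\cap L^2_sH^1$, with no gain. The local $H^2$ regularity you invoke would need a forcing in $L^2_sL^2_{loc}$, which $V\cdot\nabla V$ is not.

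\emph{No compactness is possible at this level.} Take $W$ smooth, divergence-free and compactly supported in space-time, and set $V_k(y,s)=kW(ky,k^2(s-s_0))$, extended periodically. This family is bounded in your space and tends weakly to $0$. At scale $1/k$ the drift, the zeroth-order term and $\mu$ are lower order. Hence the image of $V_k\cdot\nabla V_k$ is asymptotically $kZ(ky,k^2(s-s_0))$, where $Z\neq0$ is the Stokes flow forced by $-\nabla\cdot(W\otimes W)$, and its $L^2_sH^1$-norm does not tend to zero. So Aubin--Lions gives compactness in $L^2_sL^2_{loc}$, but not in the $L^2_sH^1$ part of your norm. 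Separately, uniform tail smallness in a norm that already contains $\|\sqrt{1+|y|^2}\,V\|_{L^2_sL^2}$ would need a strictly stronger weighted bound on the image, which you do not supply.

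\textbf{How the paper avoids this.} It works in a subcritical space, $X=\{V:(1+|y|^2)^{\delta/2}V\in L^5(\mathbb R^2\times(0,T))\}$ with $\delta\in(\frac13,\frac12)$; note $L^5_{y,s}$ lies above the critical $L^4_{y,s}$. The map is $V\mapsto Z_{F_{V,\sigma}}$, given by the Duhamel formula \eqref{eq:ZF-definition}. Lemma \ref{lem:weighted-profile-Stokes} sends $(1+|y|^2)^{\delta}F\in L^{5/2}$ to $(1+|y|^2)^{\delta}Z_F\in L^6$. This is a strict gain over $X$ in both integrability and weight. Compactness then follows from local $W^{1,5/2}$ and $\partial_s$ bounds with Aubin--Lions, interpolation with $L^6$, and the uniform tail bound $CR^{1/15-\delta}$. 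Your Step 1 estimates are used only for fixed points, and are converted into an $X$-bound by Ladyzhenskaya and H\"older (Lemma \ref{lem:uniform-fixed-point-bound-LS}).

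\textbf{A smaller point about your homotopy.} Your $\mathcal K_\sigma$ multiplies the whole nonlinearity by $\sigma$, so its fixed points solve $\mathcal LV=-\sigma(U_0+V)\cdot\nabla(U_0+V)$. This is not the problem with data $\sigma u_0$: it is $\sigma V$ that solves that problem, and $\sigma^{-1}C(\sigma A,\lambda)$ is not controlled as $\sigma\to0$. The uniform bounds are still true. Identity \eqref{eq:crucialidentity} holds verbatim when all quadratic terms carry the same factor $\sigma$, and the remaining estimates go through with $\sigma$-independent constants. But you must rederive them for your equation rather than quote them with $\sigma A$ and $\sigma B$.
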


\subsection{Construction of DSS solutions for locally $C^2$ initial data}
\label{sec:LS-construction-smooth-data}

The main goal of this section is to construct solutions by applying the Leray-Schauder fixed point theorem.
 The key step is to find  appropriate function set-up and prove  enough a priori estimates.
Assume that
\(
u_0\in C^2_{\mathrm{loc}}(\mathbb R^2\setminus\{0\})
\)
is $\lambda$-DSS and  divergence-free with zero-flux. Recall that $U_0$ is the
profile defined by
\[
e^{t\Delta}u_0(x)
=
\frac1{\sqrt t}
U_0\left(\frac{x}{\sqrt t}, \log  t\right).
\]

We consider the following problem
\begin{equation}
	\label{eq:homotopy-problem-LS}
	\left\{
	\begin{aligned}
		\partial_sV-\Delta V-\frac12V-\frac12y\cdot\nabla V
		&
		+\nabla\Pi=-	(\sigma U_0+V)\cdot\nabla(\sigma U_0+V),\\
		\nabla\cdot V&=0,\\
		V(\cdot,s+T)&=V(\cdot,s),
	\end{aligned}
	\right.
	\quad 0\leq\sigma\leq1.
\end{equation}

{In order to apply the Leray-Schauder fixed point argument,} we first fix $\delta \in (\frac{1}{3}, \frac{1}{2})$.
Let $X$ be the space of all divergence-free, $T$-periodic vector
fields $V$ such that
\[
(1+|y|^2)^{\delta/2}V
\in L^5(\mathbb R^2\times(0,T)),
\]
with norm
\begin{equation*}
	\|V\|_X
	=\left\|(1+|y|^2)^{\delta/2}V\right\|_{L^5(\mathbb R^2\times(0,T)}
	=\left(
	\int_0^T\int_{\mathbb R^2}
	(1+|y|^2)^{5\delta/2}|V(y,s)|^5\,dy\,ds
	\right)^{1/5}.
\end{equation*}
The space $X$ is a Banach space. It follows from decay estimates
\eqref{eq:U0-decay-for-LS} and $\delta<1/2$ that $U_0\in X$.

We now
let $F=F(y,s)$ be a tensor field, periodic in time $s$ with period $T$. 
Let $Z_F$ solves
\begin{equation}
	\label{eq:forced-linear-profile}
	\left\{
	\begin{aligned}
		&\partial_sZ_F-\Delta Z_F-\frac12Z_F
		-\frac12y\cdot\nabla Z_F+\nabla P_F
		=-\nabla\cdot F,\\
		&\nabla\cdot Z_F=0.
	\end{aligned}
	\right.
\end{equation}
Then  $Z_F$ can be solved by using the heat kernel as
\begin{equation}
	\label{eq:ZF-definition}
	\begin{aligned}
		Z_F(y,s)
		:=
		-\int_0^1
		e^{(1-\tau)\Delta}\mathbb P\nabla\cdot
		\left[
		\tau^{-1}F\left(\frac{\cdot}{\sqrt \tau},s+ \log  \tau \right)
		\right](y)\,d\tau,
	\end{aligned}
\end{equation}
where  $\mathbb P$ is
the Leray projection. It is obvious that $Z_F$ is periodic in $s$ with period $T$. 
 Equivalently, if one writes
\[
\widetilde F(x,t)
=
\frac1tF\left(\frac{x}{\sqrt t}, \log  t\right),
\]
then
\begin{equation*}
	\begin{aligned}
		\frac1{\sqrt t}
		Z_F\left(\frac{x}{\sqrt t}, \log  t\right)
		=
		-\int_0^t
		e^{(t-\tau)\Delta}\mathbb P\nabla\cdot
		\widetilde F(\tau)\,d\tau.
	\end{aligned}
\end{equation*}

Finally,
for any  $V\in X$ and   $\sigma\in[0,1]$, we define the tensor field
\begin{equation*}
	F_{V,\sigma}= (\sigma U_0+V)  \otimes  (\sigma U_0+V),
\end{equation*}
 and we let
\begin{equation*}
	\mathcal K(V,\sigma)=Z_{F_{V,\sigma}}.
\end{equation*}
It is easy to see that
the fixed point of $\mathcal K(V,\sigma)=V$ solves \eqref{eq:homotopy-problem-LS}. The goal is to solve the fixed point problem
	$\mathcal K(V,1)=V$.
We first show that the map $\mathcal K$ is continuous and compact. We have

\begin{proposition}\label{pro:compactofmap}
	The map 
	\begin{equation*}
		\mathcal{K}:X\times[0,1] \to X
	\end{equation*}
	is continuous and compact.
	\end{proposition}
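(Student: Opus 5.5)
The plan is to reduce both continuity and compactness of $\mathcal K$ to one weighted linear estimate for the map $F\mapsto Z_F$ defined in \eqref{eq:ZF-definition}. That estimate should gain a little spatial decay and a little regularity compared with what membership in $X$ requires.

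\textbf{Step 1: the quadratic map.} For $V\in X$ and $\sigma\in[0,1]$, set $W=\sigma U_0+V$. Since $U_0\in X$ by \eqref{eq:U0-decay-for-LS} and $\delta<1/2$, Hölder's inequality gives
\[
\|(1+|y|^2)^{\delta}F_{V,\sigma}\|_{L^{5/2}(\mathbb R^2\times(0,T))}\le \|W\|_X^2 .
\]
The same argument gives the difference bound
\[
\|(1+|y|^2)^{\delta}(F_{V,\sigma}-F_{V',\sigma'})\|_{L^{5/2}}\le \bigl(\|W\|_X+\|W'\|_X\bigr)\bigl(\|V-V'\|_X+|\sigma-\sigma'|\,\|U_0\|_X\bigr).
\]
So $(V,\sigma)\mapsto F_{V,\sigma}$ is continuous into the weighted space $Y$ with norm $\|(1+|y|^2)^{\delta}F\|_{L^{5/2}}$, and it maps bounded sets to bounded sets.

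\textbf{Step 2: the key linear estimate.} The aim is to show that $F\mapsto Z_F$ is bounded from $Y$ into a space compactly embedded in $X$. I would work in physical variables, using $\frac{1}{\sqrt t}Z_F(x/\sqrt t,\log t)=-\int_0^t e^{(t-\tau)\Delta}\mathbb P\nabla\cdot\widetilde F(\tau)\,d\tau$. The Oseen kernel $K$ of $e^{t\Delta}\mathbb P\nabla\cdot$ satisfies
\[
|\nabla^k K(x,t)|\le C_k(|x|+\sqrt t)^{-3-k}.
\]
I would split the integral dyadically in $\tau$ and in $|y-z|$ versus $|y|$, as in the weighted Stokes estimates for the stationary problem in \cite{gui2026forward} and for the DSS problem in \cite{Tsai17,Bradshaw17}. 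Periodicity in $s$ converts the range $\tau\in(0,1)$, that is $s+\log\tau\in(-\infty,s)$, into a geometric sum over periods. The weighted $L^{5/2}$ control of $F$ on one period then controls every period, with weights $\tau^{-1+\cdots}$ that remain summable.

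The intended output is the following bound, for some $\delta'\in(\delta,2\delta)$, some $\alpha>0$ and some $q>5$:
\[
\|(1+|y|^2)^{\delta'/2}Z_F\|_{L^5}+\|(1+|y|^2)^{\delta/2}Z_F\|_{L^q}+\|Z_F\|_{C^{\alpha,\alpha/2}_{\rm loc}}\le C\|F\|_Y .
\]
The extra regularity comes from the smoothing of $\nabla e^{t\Delta}$. In parabolic dimension $4$ it maps $L^{5/2}\to L^{q}$ locally for $q$ up to $20/3$, and interior Hölder regularity for the linear periodic Stokes system with drift $-\frac12 y\cdot\nabla$ follows on bounded sets.

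\textbf{Step 3: conclusion.} Continuity of $\mathcal K$ follows by linearity of $Z$ together with Step 1. For compactness, take a bounded sequence $(V_n,\sigma_n)$. By Step 2 the images $Z_n$ are equicontinuous and bounded on each $B_R\times[0,T]$, so a subsequence converges locally uniformly; periodicity lets us treat $s$ as a variable on a torus. Uniform tightness comes from the weight gain:
\[
\int_0^T\!\!\int_{|y|>R}(1+|y|^2)^{5\delta/2}|Z_n|^5\le R^{-5(\delta'-\delta)}\sup_n\|(1+|y|^2)^{\delta'/2}Z_n\|_{L^5}^5\to0 .
\]
Hence the subsequence converges in $X$. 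Divergence-freeness and periodicity are preserved because of $\mathbb P$ and the formula \eqref{eq:ZF-definition}.

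\textbf{Main obstacle.} I expect the hard part to be the weighted bound with gain $\delta'>\delta$ in Step 2. The far field of $Z_F$ is dominated by the Oseen tail $\sim|y|^{-3}\ast F$ together with the time integration, which produces decay roughly like $|y|^{-\min(2\delta+\cdots,\,1+\cdots)}$. The restriction $\delta\in(\frac13,\frac12)$ is presumably exactly what makes the exponents compatible: $2\delta>\delta$ leaves room for $\delta'$, while $5\delta>\frac53$ keeps the relevant tail integrals convergent in $L^5$. I would first try to prove a pointwise-in-$y$ bound via Hölder in $z$ against the kernel weighted by $(1+|z|)^{-2\delta}$, and then integrate in $L^5$.
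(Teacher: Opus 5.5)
\textbf{There is a genuine gap.} Your overall structure matches the paper's: a quadratic bound for $F_{V,\sigma}$ in the $(1+|y|^2)^{\delta}$-weighted $L^{5/2}$ space, a weighted linear estimate for $F\mapsto Z_F$, then local compactness plus uniform tails. Step 1 and your tail estimate are correct.

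The step that fails is the source of local compactness in Step 3. The claimed bound $\|Z_F\|_{C^{\alpha,\alpha/2}_{\mathrm{loc}}}\le C\|F\|_Y$ is false. Consider $\partial_sZ-\Delta Z=\nabla\cdot F$; the drift terms and the pressure, which is a double Riesz transform of $F$, do not change what follows. The parabolic dimension is $4$, so local boundedness or H\"older continuity of $Z$ requires $F\in L^r_{\mathrm{loc}}$ with $r>4$. With $F\in L^{5/2}$ you get exactly what you computed yourself, $Z\in L^{20/3}_{\mathrm{loc}}$ and $\nabla Z\in L^{5/2}_{\mathrm{loc}}$, and no more. By scaling, an $F$ of parabolic homogeneity $-\frac32$ near a point (with a suitable angular profile) lies in $L^{5/2}_{\mathrm{loc}}$, since $\frac52\cdot\frac32<4$. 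It generates $Z$ of homogeneity $-\frac12$, which is unbounded. Since $F$ is given data rather than a function of $Z$, no bootstrap helps. For a bounded sequence in $X$, the tensors $F_n=W_n\otimes W_n$ are only bounded in weighted $L^{5/2}$. Hence the equicontinuity and Arzel\`a--Ascoli argument has nothing to stand on.

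\textbf{The repair is the paper's route.} On each $B_R$, $Z_n$ is bounded in $L^{5/2}(0,T;W^{1,5/2}(B_R))$ and $\partial_sZ_n$ is bounded in $L^{5/2}(0,T;W^{-1,5/2}(B_R))$, by local $L^p$ theory and the equation. The Aubin--Lions lemma then gives strong convergence in $L^{5/2}(B_R\times(0,T))$. Interpolating with your uniform $L^q$ bound, $q>5$, upgrades this to $L^5(B_R\times(0,T))$, and your tail estimate finishes the proof.

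\textbf{The estimate you flag as the main obstacle is easier than you expect.} It needs no decay gain from the Oseen kernel. The paper proves $\|(1+|y|^2)^{\beta/2}Z_F\|_{L^6}\le C\|(1+|y|^2)^{\beta/2}F\|_{L^{5/2}}$ for $0\le\beta<1$ in two moves. First, Young's inequality in $y$, using $(1+|y|^2)^{\beta/2}\le C(1+|y-z|^2)^{\beta/2}(1+|z|^2)^{\beta/2}$ and $\|(1+|x|^2)^{\beta/2}\mathcal O(x,a)\|_{L^{30/23}}\le Ca^{-11/15}$. Second, Young's inequality on $\mathbb T_T$ with the periodized kernel $h_T\in L^{30/23}$. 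Take $\beta=2\delta<1$. The gain in weight relative to $X$ then comes for free from the quadratic structure. H\"older against $(1+|y|^2)^{(\delta'-2\delta)/2}\in L^{30}$ gives your $\delta'$-bound for any $\delta'<2\delta-\frac1{15}$; the case $\delta'=\delta$ gives $\mathcal K:X\to X$. The same estimate also supplies the $L^q$ bound with $q=6$.
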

To prove this proposition, 
we need some weighted estimates for the inhomogeneous Stokes equations \eqref{eq:forced-linear-profile}. 
\begin{lemma}[Weighted  Stokes estimate]
	\label{lem:weighted-profile-Stokes}
	Let $0\leq\beta<1$. Let $Z_F$ be defined in \eqref{eq:ZF-definition}. There exists a constant $C=C(\beta,T)$ such that
	\begin{equation}
		\label{eq:weighted-profile-Stokes-estimate}
		\begin{aligned}
			\left\|
			(1+|y|^2)^{\beta/2}Z_F
			\right\|_{L^6(\mathbb R^2\times(0,T))}\leq
			C
			\left\|
			(1+|y|^2)^{\beta/2}F
			\right\|_{L^{\frac{5}{2}}(\mathbb R^2\times(0,T))}.
		\end{aligned}
	\end{equation}
	Moreover, for every $R>0$, there exists a constant $C=C(R, \beta,T)$ such that
	\begin{equation}
		\label{eq:local-profile-Stokes-estimate}
		\begin{aligned}
			\,&\|Z_F\|_{L^{\frac{5}{2}}(0,T;W^{1,\frac{5}{2}}(B_R))}
			+
			\|\partial_sZ_F\|_{L^{\frac{5}{2}}(0,T;W^{-1,\frac{5}{2}}(B_R))}\\
			\leq\,&
			C(R,\beta,T)
			\left\|
			(1+|y|^2)^{\beta/2}F
			\right\|_{L^{\frac{5}{2}}(\mathbb R^2\times(0,T))}.
		\end{aligned}
	\end{equation}
\end{lemma}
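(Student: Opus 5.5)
Our plan is to pass to physical variables, where the operator defining $Z_F$ becomes the standard Oseen kernel, and use that the weights and mixed norms transform explicitly under the similarity change of variables. Writing $\widetilde F(x,t)=t^{-1}F(x/\sqrt t,\log t)$, the representation after \eqref{eq:ZF-definition} gives
\[
\frac1{\sqrt t}Z_F\Big(\frac{x}{\sqrt t},\log t\Big)=-\int_0^t\int_{\mathbb R^2}\nabla K(x-z,t-\tau)\widetilde F(z,\tau)\,dz\,d\tau ,
\]
where $K$ is the Oseen kernel, with $|\nabla K(x,t)|\le C(|x|+\sqrt t)^{-3}$. We will restrict to $t\in[1,e^T]$, which corresponds to $s\in[0,T]$. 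Using the $T$-periodicity of $F$ in $s$, the range $\tau\in(0,t)$ splits into dyadic log-periods $\tau\in[e^{-kT},e^{-(k-1)T}]\cdot t$. On each piece $\widetilde F$ is a rescaled copy of $F$ on one period, with an explicit factor. We then sum over $k$.

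\textbf{Step 1 (near part, $\tau\in(e^{-T}t,t)$ after rescaling, i.e.\ $1-\tau\lesssim1$ in \eqref{eq:ZF-definition}).} Here $\tau\in(e^{-T},1)$ in \eqref{eq:ZF-definition}, so the rescaling $F(\cdot/\sqrt\tau)$ costs only constants depending on $T$, and the weight $(1+|y|^2)^{\beta/2}$ changes by at most a factor $C(T)$. We will use the parabolic estimate for the kernel $\nabla K$ on a finite time interval. In two dimensions $\nabla K\in L^{r,\infty}_{x,t}$ with $1/r=3/4$ (parabolic homogeneity $-3$ in dimension $4$). Young's inequality in weak spaces then gives $L^{5/2}\to L^q$ with $1/q=2/5-1/4=3/20$, i.e.\ $q=20/3>6$. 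Restricted to finite time we also get $L^6$. To insert the weight, we split $|y-z|<(1+|y|)/2$ and its complement. On the first set the weights are comparable. On the complement $|\nabla K|\lesssim(1+|y|)^{-3}$, and with $\beta<1$ the resulting term is bounded by Hölder.

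\textbf{Step 2 (far parts, $\tau\in(0,e^{-T})$).} Here $1-\tau\ge c$, so the kernel $e^{(1-\tau)\Delta}\mathbb P\nabla\cdot$ is smooth and bounded by $C(1+|y-z|)^{-3}$. Changing variables $z=\sqrt\tau\,w$ in the term $\tau^{-1}F(z/\sqrt\tau,s+\log\tau)$ gives
\[
\int (1+|y-\sqrt\tau w|)^{-3}F(w,s+\log\tau)\,dw .
\]
Summing over log-periods, i.e.\ integrating over $\tau$ with $d\tau$, we obtain a convergent geometric factor in $\tau$. Periodicity turns the $s$-integration over each period into the same $L^{5/2}(\mathbb R^2\times(0,T))$ norm. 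For the weight we use $(1+|y|)^\beta\le(1+|y-\sqrt\tau w|)^\beta(1+|w|)^\beta$. Since $\beta<1$, $(1+|\cdot|)^{\beta-3}$ is integrable in $L^{r}$ for all $r\ge1$, and Young's inequality closes this part. This is where the restriction $\beta<1$ enters, and we expect this weight bookkeeping to be the main obstacle. Specifically, the tail of the near part must be handled carefully for $\beta$ close to $1$, where the cross term behaves like $(1+|y|)^{\beta-3}\|F\|_{L^1}$, and $\|F\|_{L^1}$ must in turn be controlled by the weighted $L^{5/2}$ norm. That control requires $\beta\cdot5/3>2\cdot(1-2/5)$, which fails for small $\beta$. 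We will therefore instead bound that piece in $L^6$ by Young's inequality with the $L^{5/3}$-type kernel norm rather than through $L^1$.

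\textbf{Step 3 (local estimate \eqref{eq:local-profile-Stokes-estimate}).} On $B_R$ the weight is comparable to $1$ up to $C(R)$. Maximal regularity for the Stokes system in divergence form gives $\nabla Z_F\in L^{5/2}$ locally, in the form $\|\nabla Z_F\|_{L^{5/2}}\lesssim\|F\|_{L^{5/2}}$ plus lower-order terms. The drift $\frac12y\cdot\nabla Z_F$ and the zeroth-order term are bounded on $B_R$. We will treat them as perturbations via a cutoff, using the global $L^6$ (hence local $L^{5/2}$) bound from Steps 1–2 for $Z_F$ itself. Finally, for $\partial_sZ_F$ we read the equation, tested against divergence-free functions in $W^{1,5/3}_0(B_R)$, to get the $L^{5/2}(0,T;W^{-1,5/2}(B_R))$ bound. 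The pressure is eliminated by this choice of test functions, or is controlled by Calderón–Zygmund in $L^{5/2}$.
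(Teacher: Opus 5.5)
Your argument is correct in substance. It uses the same ingredients as the paper: the bound $|\mathcal O(x,a)|\le C(|x|+\sqrt a)^{-3}$, moving the weight onto $F$, Young's inequality, and summation over logarithmic periods. The near-time piece, however, is organized differently.

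The paper treats all $\tau\in(0,1)$ at once:
\begin{itemize}
\item Peetre's inequality $(1+|y|^2)^{\beta/2}\le C(1+|y-z|^2)^{\beta/2}(1+|z|^2)^{\beta/2}$ and scaling give $\|(1+|x|^2)^{\beta/2}\mathcal O(\cdot,a)\|_{L^{30/23}}\le Ca^{-11/15}$ for $a\le1$.
\item Rescaling $G_\tau$ costs a factor $\tau^{-3/5}$.
\item After $\tau=e^{-\theta}$ this produces the time kernel $h(\theta)=(1-e^{-\theta})^{-11/15}e^{-2\theta/5}$. Its periodization $h_T=h+\sum_{k\ge1}h(\cdot+kT)$ lies in $L^{30/23}(0,T)$, and Young's inequality on $\mathbb T_T$ finishes.
\end{itemize}
Your near/far split is exactly $h$ versus $\sum_{k\ge1}h(\cdot+kT)$, and your Step 2 is the paper's tail bound. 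There you should track the factor $\tau^{-3/5}$ explicitly. The kernel $(1+|y-\sqrt\tau w|)^{\beta-3}$ is a convolution kernel only after rescaling, and this factor is what produces the geometric weights $e^{-2kT/5}$.

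For the near piece you replace the integrable singularity $(1-\tau)^{-11/15}$ by a parabolic Young inequality in physical variables. You also replace Peetre's inequality by a spatial near/far split. This works but is longer. Peetre's inequality would apply there as well and would have spared you the cross-term detour.

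Three statements need repair, none of them fatal.

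\textbf{(i)} $L^{20/3}(\mathbb R^2\times(0,T))$ does not embed into $L^6$, since $\mathbb R^2$ has infinite measure. The correct reason for the $L^6$ bound is that $|\nabla K|\,\mathbf{1}_{\{0<t<T'\}}$ lies in strong $L^r(\mathbb R^2\times\mathbb R)$ for $1\le r<4/3$, in particular for $r=30/23$. Strong Young then gives $L^{5/2}\to L^6$.

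\textbf{(ii)} The $L^1$ route for the cross term needs $(1+|z|)^{-\beta}\in L^{5/3}(\mathbb R^2)$, i.e.\ $\beta>6/5$. So it fails for every $\beta<1$, not only for small $\beta$. Your fallback does work. Uniformly in $\sigma$, $\|\nabla K(\cdot,\sigma)\|_{L^{5/3}(\{|x|>(1+|y|)/2\})}\lesssim(1+|y|)^{-9/5}$. Pointwise H\"older then yields $(1+|y|)^{\beta-9/5}$, which is in $L^6(\mathbb R^2)$ for $\beta<22/15$. Alternatively, simply use $1+|y|\le2(1+|y-z|)$ on that set.

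\textbf{(iii)} In Step 3, testing only against divergence-free functions controls $\partial_sZ_F$ in the dual of $W^{1,5/3}_{0,\sigma}(B_R)$. That norm is strictly weaker than the $W^{-1,5/2}(B_R)$ norm in the statement: it does not see $a'(s)\nabla h$ with $h$ harmonic. You need your second option. Since $\nabla\cdot(\frac12Z_F+\frac12y\cdot\nabla Z_F)=0$, $P_F$ is a double Riesz transform of $F$, so $\|P_F\|_{L^{5/2}}\le C\|F\|_{L^{5/2}}$. Also write the drift as $-\frac12y\cdot\nabla Z_F=-\frac12\nabla\cdot(Z_F\otimes y)+Z_F$, as the paper does. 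It is then controlled by $\|Z_F\|_{L^{5/2}(B_{2R}\times(0,T))}$ rather than requiring $\nabla Z_F$ on the larger ball.
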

We give the proof of this lemma in Appendix \ref{sec:App}. With this lemma in hand, we can prove Proposition \ref{pro:compactofmap}.
\begin{proof}[Proof of Proposition \ref{pro:compactofmap}]
Recall that
\begin{equation*}
F_{V,\sigma}= (\sigma U_0+V)  \otimes  (\sigma U_0+V),
\end{equation*}
	Then 
	\begin{equation*}
		\begin{aligned}
			&\left\|
			(1+|y|^2)^\delta 	F_{V,\sigma}
			\right\|_{L^{\frac{5}{2}}(\mathbb R^2\times(0,T))}\leq
			\left\|
			(1+|y|^2)^{\delta/2}(\sigma U_0+V)
			\right\|_{L^5(\mathbb R^2\times(0,T))}^2 = \|\sigma U_0+V\|^2_X.
		\end{aligned}
	\end{equation*}
	Since $2\delta<1$, one can apply  Lemma \ref{lem:weighted-profile-Stokes} with
	$\beta=2\delta$ together with the above to have
	\begin{equation}
		\label{eq:K-strong-weighted-L6}
		\begin{aligned}
			\left\|
			(1+|y|^2)^\delta\mathcal K(V, \sigma)
			\right\|_{L^6(\mathbb R^2\times(0,T))} &\leq C(\delta,T) \left\|
			(1+|y|^2)^\delta 	F_{V,\sigma}
			\right\|_{L^{\frac{5}{2}}(\mathbb R^2\times(0,T))}\\
			&\leq
			C(\delta,T)\left(\|U_0\|_X+\|V\|_X\right)^2.
		\end{aligned}
	\end{equation}
	Using Hölder's inequality and $\delta>\frac{1}{3}$, we obtain
	\begin{equation}
		\label{eq:K-maps-into-X}
		\begin{aligned}
			\|\mathcal K(V, \sigma)\|_X
			&\leq
			\left\|
			(1+|y|^2)^\delta\mathcal K(V, \sigma)
			\right\|_{L^6}
			\left\|
			(1+|y|^2)^{-\delta/2}
			\right\|_{L^{30}(\mathbb R^2\times(0,T))}\\
			&\leq C(\delta,T) \left\|
			(1+|y|^2)^\delta\mathcal K(V, \sigma)		\right\|_{L^6}\leq C(\delta,T)\left(\|U_0\|_X+\|V\|_X\right)^2.
		\end{aligned}
	\end{equation}
Hence
	$\mathcal K$ maps bounded sets of $X\times[0,1]$ into bounded sets of $X$.
	
	We next prove the continuity of $\mathcal{K}$.
	Let $V_1,V_2\in X$ and $\sigma_1,\sigma_2\in[0,1]$. 
It follows from direct calculations that
\begin{equation}\label{eq:difference1}
	\begin{aligned}
		&\left\|
		(1+|y|^2)^\delta
		(F_{V_1,\sigma_1} - 	F_{V_2,\sigma_2})
		\right\|_{L^{\frac{5}{2}}(\mathbb R^2\times(0,T))}\\
		\leq&
		\left(2\|U_0\|_X +\|V_1\|_X+\|V_2\|_X\right)
		\left(
		\|V_1-V_2\|_X
		+|\sigma_1-\sigma_2|\|U_0\|_X
		\right).
	\end{aligned}
\end{equation}
	Lemma \ref{lem:weighted-profile-Stokes} together with \eqref{eq:K-maps-into-X} and \eqref{eq:difference1}
 yields that
	\begin{equation*}
		\begin{aligned}
		\|\mathcal{K}(V_1,\sigma_1) - \mathcal{K}(V_2,\sigma_2)\|_X &\leq C(\delta,T)\left\|
		(1+|y|^2)^\delta(\mathcal K(V_1, \sigma_1)	 -	\mathcal K(V_2, \sigma_2)	 )\right\|_{L^6}\\
		&\leq  C(\delta,T) \left\|
		(1+|y|^2)^\delta 	(F_{V_1,\sigma_1}-F_{V_2,\sigma_2})
		\right\|_{L^{\frac{5}{2}}(\mathbb R^2\times(0,T))}\\
		&\leq C(\delta,T)\left(2\|U_0\|_X +\|V_1\|_X+\|V_2\|_X\right)
		\left(
		\|V_1-V_2\|_X
		+|\sigma_1-\sigma_2|\|U_0\|_X
		\right),
			\end{aligned}
	\end{equation*}
	 which shows the continuity of $\mathcal{K}$ on $X\times[0,1]$.
	
	Finally, we prove that $\mathcal{K}$ is a compact mapping. Let $\{V_n\}$ be bounded sequence in $X$, and
	$\sigma_n\to\sigma\in [0,1]$. 
It suffices to show that $\mathcal{K}(V_n,\sigma_n)$ has a convergent subsequence in $X$. After passing to a subsequence, we may assume that there exists some $Z\in X$ such that 
$\mathcal{K}(V_n,\sigma_n)\to Z$ weakly in $X$.
For every $R>0$, Lemma \ref{lem:weighted-profile-Stokes} shows 
	that $\{\mathcal{K}(V_n,\sigma_n)\}$ is bounded in
	$L^{\frac{5}{2}}(0,T;W^{1,\frac{5}{2}}(B_R))$.
	and $\{\partial_s \mathcal{K}(V_n,\sigma_n)\}$ is bounded in
$L^{\frac{5}{2}}(0,T;W^{-1,\frac{5}{2}}(B_R))$.
	Since
	\begin{equation*}
		W^{1,\frac{5}{2}}(B_R)
		\Subset L^{\frac{5}{2}}(B_R)
		\hookrightarrow W^{-1,\frac{5}{2}}(B_R),
	\end{equation*}
	the Aubin-Lions lemma shows that $\{\mathcal{K}(V_n,\sigma_n)\}$ is compact subset of $L^{\frac{5}{2}}(B_R\times(0,T))$.
	Hence,
	\begin{equation*}
		\mathcal{K}(V_n,\sigma_n) \to Z
		\quad\text{strongly in }
		L^{\frac{5}{2}}(B_R\times(0,T)).
	\end{equation*}
	Due to \eqref{eq:K-strong-weighted-L6}, we have
	\begin{equation*}
		\sup_n
		\left\|
		(1+|y|^2)^\delta \mathcal{K}(V_n,\sigma_n)
		\right\|_{L^6(\mathbb R^2\times(0,T))}<\infty.
	\end{equation*}
Interpolation with  $L^6(\mathbb R^2\times(0,T))$ gives
	\begin{equation*}
		\mathcal{K}(V_n,\sigma_n) \to  Z
		\textrm{ strongly in }
		L^5(B_R\times(0,T)).
	\end{equation*}
To show the convergence
\begin{equation*}
	\mathcal{K}(V_n,\sigma_n) \to Z \textrm{ in } X,
\end{equation*}
which would yield the compactness of $\mathcal{K}$, we only need to prove uniform tail estimates in $X$.
Indeed, H\"older's inequality gives
\begin{equation*}
	\begin{aligned}
		& \left\|(1+|y|^2)^{\frac{\delta}{2}}\mathcal{K}(V_n,\sigma_n)\right\|_{L^5(\{|y|>R\}\times(0,T))}\\
		\leq& \left\|(1+|y|^2)^{\delta}\mathcal{K}(V_n,\sigma_n)\right\|_{L^6(\{|y|>R\}\times(0,T))}\left\|(1+|y|^2)^{-\frac{\delta}{2}}\right\|_{L^{30}(\{|y|>R\}\times(0,T))}\leq CR^{\frac{1}{15}-\delta}
		\end{aligned}
\end{equation*}
	Since $\delta>1/3$, the right-hand side above tends to $0$ as $R\to\infty$, uniformly in $n$. This finishes the proof.
\end{proof}

Now the major task is to prove the uniform bounds for the fixed points in $X$.

\begin{lemma}[Uniform bounds for fixed-points]
	\label{lem:uniform-fixed-point-bound-LS}
	There exists $C=C(B,\lambda,\delta)$ such that every fixed point of 
	\(V=\mathcal K(V, \sigma),
	 0\leq\sigma\leq1\)
	satisfies $\|V\|_X\leq C$.
\end{lemma}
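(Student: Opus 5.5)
The plan is to reduce the bound in $X$ to the energy-type estimates already established, applied to the homotopy problem \eqref{eq:homotopy-problem-LS}. A fixed point $V=\mathcal K(V,\sigma)$ solves \eqref{eq:homotopy-problem-LS}. Equivalently, $V$ solves \eqref{eq:differencenew} with $U_0$ replaced by $\sigma U_0$, since $\sigma U_0$ satisfies the linear equation \eqref{eq:equforheateq}. Moreover, $\sigma U_0$ is the caloric profile of the datum $\sigma u_0$, which is again divergence-free, $\lambda$-DSS, of zero flux, with $\|\sigma u_0\|_{L^2(\mathcal A)}\le A$ and $\|\sigma u_0\|_{C^2(\mathcal A)}\le B$. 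So all constants in Section \ref{sec:energy estimates} and in Proposition \ref{prop:weighted_H1est} are uniform in $\sigma\in[0,1]$. The first step is to check that a fixed point in $X$ has enough regularity and decay to justify the tests used there: multiplication by $V$, by $U_0$, by $|V|^{p-2}V$ and by $|y|^2V/(1+\epsilon|y|^2)$, together with the periodicity in $s$. I would do this by bootstrapping with Lemma \ref{lem:weighted-profile-Stokes} and local parabolic regularity, starting from $(1+|y|^2)^{\delta}V\in L^6$ given by \eqref{eq:K-strong-weighted-L6}.

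With the energy method justified, Theorem \ref{thm:energyestkey} gives
\[\sup_s\|V(s)\|_{H^1}\le C(A,\lambda)\le C(B,\lambda).\]
Proposition \ref{prop:weighted_H1est} then gives
\[\sup_s\||y|V(s)\|_{L^2}+\|\sqrt{1+|y|^2}\,V\|_{L^2(0,T;H^1)}\le C(B,\lambda).\]
Combining these, $\sup_s\|(1+|y|)V(s)\|_{L^2}$ and $\sup_s\|V(s)\|_{H^1}$ are both bounded.

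To pass to $L^5$ with weight $(1+|y|^2)^{5\delta/2}$, I would interpolate pointwise in $s$ using $|y|^{5\delta}|V|^5=(|y|^2|V|^2)^{5\delta/2}\,|V|^{5-5\delta}$. Hölder's inequality with exponents $2/(5\delta)$ and $2/(2-5\delta)$ is admissible since $5\delta<5/2$; in the borderline range $5\delta\ge 2$, I would instead split the weight differently. This gives
\[\int|y|^{5\delta}|V|^5\le \||y|V\|_{L^2}^{5\delta}\,\|V\|_{L^q}^{5-5\delta},\qquad q=\frac{2(5-5\delta)}{2-5\delta},\]
and $\|V\|_{L^q}$ is controlled by the $H^1$ bound through 2D Sobolev embedding. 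The case $\delta\in[2/5,1/2)$ needs $|y|^2|V|^2$ with a higher Lebesgue power. For that range I would use the weighted $H^1$ bound $\nabla(|y|V)\in L^2_sL^2_y$ together with Ladyzhenskaya's inequality applied to $|y|^{\delta}V$: since $\delta<1$, interpolating between $\|V\|_{H^1}$ and $\||y|V\|_{H^1}$ in $L^2_s$, and using the $L^\infty_s$ bounds for the $L^2$ parts, gives $|y|^\delta V\in L^5_{s,y}$. Integrating over $[0,T]$ then yields $\|V\|_X\le C(B,\lambda,\delta)$.

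I expect the main obstacle to be the justification step, not the estimates. The a priori estimates of Section \ref{sec:energy estimates} were derived for energy-perturbed DSS solutions, whereas a fixed point is only known to lie in $X$. To make the $L^2$-energy and cancellation identities \eqref{eq:crucialidentity} rigorous, I must show that $V\in L^\infty_sL^2\cap L^2_sH^1$ with $\partial_sV$ in the dual space. I would first try to obtain this from the Stokes representation \eqref{eq:ZF-definition}: $F=(\sigma U_0+V)^{\otimes2}$ lies in a weighted $L^{5/2}$, and iterating the mild formulation with heat-kernel smoothing should upgrade $V$ to $L^\infty_sL^2\cap L^2_s\dot H^1$ with polynomial decay. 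If that fails, the fallback is to regularize with a cutoff in $y$ and pass to the limit using the decay provided by $X$.
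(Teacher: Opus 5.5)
Your proposal is correct and follows the paper's overall strategy. You apply Theorem~\ref{thm:energyestkey} and Proposition~\ref{prop:weighted_H1est} to the fixed point, and then interpolate into the weighted $L^5$ norm of $X$. Your remark that $\sigma U_0$ is the caloric profile of $\sigma u_0$, so that all constants are uniform in $\sigma$, is exactly what the paper uses implicitly.

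The difference is in the interpolation. You apply H\"older pointwise in $s$, using $\sup_s\|(1+|y|)V(s)\|_{L^2}$ and $\sup_s\|V(s)\|_{L^q}$. This needs $5\delta<2$, not $5\delta<5/2$ as you first wrote: the exponent $2/(5\delta)$ must be at least $1$, and $5\delta=2$ would require $L^\infty$. So this route fails on $[2/5,1/2)$ and forces your case split.

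The paper avoids the split. It first applies Ladyzhenskaya to the fully weighted field $(1+|y|^2)^{1/2}V$, using its $L^\infty_sL^2$ and $L^2_sH^1$ bounds, to get $(1+|y|^2)^{1/2}V\in L^4(\mathbb R^2\times(0,T))$. It then makes a single H\"older split $\frac15=\frac\delta4+\frac{1-\delta}{q_\delta}$ against $V\in L^\infty_sL^{q_\delta}$, which works for every $\delta<4/5$.

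Your fallback for $\delta\ge 2/5$ also works, but not via Ladyzhenskaya, which only reaches $L^4$. With $f=(1+|y|^2)^{\delta/2}V$ you need the Gagliardo--Nirenberg bound $\|f\|_{L^5}^5\le C\|f\|_{L^2}^2\|\nabla f\|_{L^2}^3$. You also need $\nabla f\in L^{2/\delta}_sL^2\subset L^3_sL^2$, which follows from H\"older between $\sup_s\|\nabla V\|_{L^2}$ and $\|(1+|y|^2)^{1/2}\nabla V\|_{L^2_sL^2}$. This covers every $\delta\le 2/3$, so it could replace the case split entirely. Use the smooth weight $(1+|y|^2)^{\delta/2}$ rather than $|y|^\delta$, to avoid a singular gradient at the origin.

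The justification step you single out as the main obstacle is not addressed in the paper's proof at all. The paper applies the a priori estimates of Section~\ref{sec:energy estimates}, which are stated for energy-perturbed solutions, directly to fixed points in $X$. So this is additional rigor on your part, not a deviation from the paper.

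Be aware that this step is not a formality. Because the problem is critical, the energy estimate for \eqref{eq:ZF-definition} started at $t=0$ has right-hand side $\int_{-\infty}^{s}\|F(\cdot,s')\|_{L^2}^2\,ds'$, which is infinite by periodicity. The upgrade therefore has to exploit the weight in $X$. For instance, \eqref{eq:K-strong-weighted-L6} with $\delta>1/3$ gives $V\in L^6_sL^2_y$, because $(1+|y|^2)^{-\delta}\in L^3(\mathbb R^2)$. From there you would need local regularity and energy identities over one period.
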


\begin{proof}
	By Theorem \ref{thm:energyestkey} and
	Proposition \ref{prop:weighted_H1est}, one has
	\begin{equation*}
	\sup_{0\leq s\leq T}
	\left\|
	(1+|y|^2)^{1/2}V(s)
	\right\|_{L^2(\mathbb{R}^2)}
	+
	\left\|
	(1+|y|^2)^{1/2}V
	\right\|_{L^2(0,T;H^1(\mathbb R^2))}
	\leq C(B,\lambda).
	\end{equation*}
	The  Ladyzhenskaya inequality  gives
	\begin{equation*}
		\begin{aligned}
			&\left\|
			(1+|y|^2)^{1/2}V
			\right\|_{L^4(\mathbb R^2\times(0,T))}^4\\
			\leq&
			C
			\sup_{0\leq s\leq T}
			\left\|
			(1+|y|^2)^{1/2}V(s)
			\right\|_{L^2}^2
			\int_0^T
			\left\|
			(1+|y|^2)^{1/2}V(s)
			\right\|_{H^1}^2\,ds\\
			\leq & C(B,\lambda).
		\end{aligned}
	\end{equation*}
It follows from  Lemma \ref{pro:uniform-H1} and the
Sobolev inequality that, for every finite $q>2$,
	\begin{equation*}
		\sup_{0\leq s\leq T}
		\|V(s)\|_{L^q(\mathbb R^2)}
		\leq C(B,\lambda,q).
	\end{equation*}
	For the fixed number $\delta\in(1/3,1/2)$, we define
	\(	q_\delta
	=
	\frac{20(1-\delta)}{4-5\delta}>2\) so that
	\[
	\frac15
	=
	\frac{\delta}{4}
	+
	\frac{1-\delta}{q_\delta}.
	\]
	Noting
	\begin{equation*}
		(1+|y|^2)^{\delta/2}|V|
		=
		\left(
		(1+|y|^2)^{1/2}|V|
		\right)^\delta
		|V|^{1-\delta},
	\end{equation*} 
using
	H\"older's inequality then gives
	\begin{align*}
		\|V\|_X
		&=
		\left\|
		(1+|y|^2)^{\delta/2}V
		\right\|_{L^5(\mathbb R^2\times(0,T))}\\
		&\leq
		\left\|
		(1+|y|^2)^{1/2}V
		\right\|_{L^4(\mathbb R^2\times(0,T))}^{\delta}
		\|V\|_{L^{q_\delta}(\mathbb R^2\times(0,T))}^{1-\delta}\\
		&\leq
		C(B,\lambda,\delta).
	\end{align*}
	This proves the desired uniform bound in $X$.
\end{proof}

Now we are ready to apply the Leray-Schauder fixed point theorem to prove the existence of solutions.

\begin{theorem}
	Under the same assumption of Theorem \ref{thm:energyestkey} and that $B=\|u_0\|_{C^2(\mathcal{A})}<\infty$, 
there exists a smooth,
	$T$-periodic solution $V\in X$ of
	\begin{equation}
		\label{eq:final-profile-equation-LS}
		\left\{
		\begin{aligned}
			\partial_sV-\Delta V-\frac12V-\frac12y\cdot\nabla V
			&+
			(U_0+V)\cdot\nabla(U_0+V)+\nabla\Pi=0,\\
			\nabla\cdot V&=0,\\
			V(\cdot,s+T)&=V(\cdot,s).
		\end{aligned}
		\right.
	\end{equation}
	Moreover, there eixsts a constant $C=C(A,\lambda)$ such that
	\begin{equation*}
	\sup_{0\leq s\leq T} \|V(\cdot,s)\|_{L^2(\mathbb{R}^2)}+	\|V\|_{L^2(0,T;H^{1}(\mathbb{R}^2))}  \leq C(A,\lambda).
	\end{equation*}
\end{theorem}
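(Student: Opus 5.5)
The proof will apply the Leray--Schauder fixed point theorem to $\mathcal K:X\times[0,1]\to X$, taking as inputs Proposition \ref{pro:compactofmap} and Lemma \ref{lem:uniform-fixed-point-bound-LS}.

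\textbf{Step 1: the endpoint $\sigma=0$.} At $\sigma=0$ we have $F_{V,0}=V\otimes V$, which is quadratic in $V$. Hence $\mathcal K(0,0)=0$. More precisely, we use the form of Leray--Schauder in which $\mathcal K(\cdot,0)$ is the zero map on a bounded set, or equivalently $V=0$ is the only fixed point near the origin and it has nonzero index. To avoid degree theory I will use the Schaefer-type version instead: if $\mathcal K$ is continuous and compact, $\mathcal K(V,0)=0$ for every $V$, and all solutions of $V=\mathcal K(V,\sigma)$ lie in a fixed ball, then $\mathcal K(\cdot,1)$ has a fixed point. To put the problem in this form, I will redefine the homotopy as $\widetilde{\mathcal K}(V,\sigma):=Z_{F}$ with $F=(\sigma U_0+\sigma V)\otimes(\sigma U_0+\sigma V)$. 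This map is $0$ at $\sigma=0$, and at $\sigma=1$ it agrees with $\mathcal K(\cdot,1)$. A fixed point of $\widetilde{\mathcal K}(\cdot,\sigma)$ solves
\[
\partial_sV-\Delta V-\tfrac12V-\tfrac12 y\cdot\nabla V+\nabla\Pi=-\sigma^2(U_0+V)\cdot\nabla(U_0+V).
\]
The continuity and compactness proofs in Proposition \ref{pro:compactofmap} go through verbatim for $\widetilde{\mathcal K}$. For the a priori bound, I will check that Theorem \ref{thm:energyestkey}, Proposition \ref{prop:weighted_H1est} and Lemma \ref{lem:uniform-fixed-point-bound-LS} hold uniformly in $\sigma\in[0,1]$ for this family.

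\textbf{Step 2: uniform bound.} For Step 1 to work, the cancellation identity \eqref{eq:crucialidentity} must survive the factor $\sigma^2$. This is the main obstacle. To handle it, I will set $\mu=\sigma^2$ and $W=\mu V$. Then $\mu(U_0+V)=\mu U_0+W$, and $W$ solves
\[
\partial_sW-\Delta W-\tfrac12W-\tfrac12y\cdot\nabla W+\nabla\widetilde\Pi=-(\mu U_0+W)\cdot\nabla(\mu U_0+W),
\]
which is exactly \eqref{eq:homotopy-problem-LS} with datum $\mu u_0$. Since $\mu\le1$, the datum $\mu u_0$ has $A$- and $B$-norms bounded by those of $u_0$, so Lemma \ref{lem:uniform-fixed-point-bound-LS} gives $\|W\|_X\le C(B,\lambda,\delta)$. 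This bounds $V$ only when $\mu$ is bounded below. For small $\mu$ I will argue directly from \eqref{eq:K-maps-into-X}: it gives $\|V\|_X\le C\mu(\|U_0\|_X+\|V\|_X)^2$. Combined with continuity of the fixed-point set and connectedness, this traps fixed points in the small branch $\|V\|_X\lesssim\mu$. That argument is delicate, so as a first fallback I will instead run the energy estimates of Section \ref{sec:energy estimates} directly on $V$ with the $\mu$-weighted nonlinearity. In that setting the identity \eqref{eq:crucialidentity} becomes $\int\!\!\int|\nabla V|^2+2\mu\int\!\!\int\nabla U_0\cdot\nabla V\,(\ldots)$, and it should still close because all forcing terms carry a factor $\mu\le1$.

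\textbf{Step 3: conclusion.} The fixed point $V$ of $\mathcal K(\cdot,1)$ lies in $X$ and solves \eqref{eq:final-profile-equation-LS} in the mild sense. Smoothness follows from standard parabolic bootstrapping: $U_0$ is smooth for $s\in\mathbb R$, and $V\in L^\infty_sH^1$, so Stokes regularity can be iterated locally. The estimate
\[
\sup_s\|V\|_{L^2}+\|V\|_{L^2H^1}\le C(A,\lambda)
\]
depends only on $A$, not on $B$. It follows from Theorem \ref{thm:energyestkey} together with Proposition \ref{lem:Dirichlet energy estimates}, which apply because $V$ now has enough decay for the integrations by parts used there to be justified.
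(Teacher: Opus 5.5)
Your route differs from the paper's exactly at the step you flag, and your main argument for that step does not close.

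\textbf{How the paper avoids the issue.} The paper keeps the homotopy inside the linear profile, $F_{V,\sigma}=(\sigma U_0+V)\otimes(\sigma U_0+V)$. A fixed point at level $\sigma$ is then literally the remainder for the datum $\sigma u_0$, so Lemma \ref{lem:uniform-fixed-point-bound-LS} bounds all of them at once. The price is that $\mathcal K(\cdot,0)=Z_{V\otimes V}\neq0$. So the paper uses the degree form of Leray--Schauder, which needs that $V=0$ is the only fixed point at $\sigma=0$ and has index one. Uniqueness follows from the Dirichlet identity with $U_0=0$, which gives $\nabla V\equiv0$; the index is one because the map is quadratic.

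\textbf{Where your primary argument fails.} Your Schaefer form makes $\sigma=0$ trivial but moves all the work into the a priori bound. The rescaling $W=\mu V$ only gives $\|V\|_X\le C/\mu$. The quadratic inequality $x\le C\mu(a+x)^2$ allows both $x\lesssim\mu$ and $x\gtrsim1/\mu$. These two facts are compatible, so nothing excludes fixed points with $\|V\|_X\sim1/\mu$ (equivalently $\|W\|_X\sim1$) as $\mu\to0$. ``Continuity of the fixed-point set and connectedness'' is not available: fixed-point sets of a compact family are only upper semicontinuous in the parameter and need not be connected. Schaefer's theorem also needs a bound on all fixed points, not just one branch.

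\textbf{The missing ingredient} is precisely the paper's $\sigma=0$ uniqueness. Suppose $\mu_k\to0$ and $W_k=\mathcal K(W_k,\mu_k)$ with $\|W_k\|_X\le C$. Compactness of $\mathcal K$ gives a subsequence converging in $X$ to a fixed point of $\mathcal K(\cdot,0)$, which must be $0$. Hence $\sup\|W\|_X\to0$ as $\mu\to0$. The proof of \eqref{eq:K-maps-into-X} actually gives $\|W\|_X\le C(\mu\|U_0\|_X+\|W\|_X)^2$, which then forces $\|W\|_X\le C\mu^2$, i.e.\ $\|V\|_X\le C\mu$ for small $\mu$.

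\textbf{Your fallback.} It also works, but not for the reason you give, and it must be checked rather than asserted. Test the $\mu$-problem with $V$, test it with $U_0$, and test \eqref{eq:equforheateq} with $V$. The nonlinear contributions are $\mp\mu\int_0^T\!\int_{\mathbb R^2}\bigl[(U_0\cdot\nabla U_0)\cdot V+(V\cdot\nabla U_0)\cdot V\bigr]$ and cancel. So you recover exactly \eqref{eq:crucialidentity}, with no factor $\mu$ on the cross term. This is consistent with $W=\mu V$, since the identity for $W$ is homogeneous of degree two in $\mu$.

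In the $V_1$-equation the forcing $F(U_0,w)$ carries no $\mu$, because it is the linear operator applied to $U_1-U_0$. It is, however, bounded independently of $\mu$. Meanwhile $U_1\cdot\nabla U_1$, $V_1\cdot\nabla U_1$, the transport terms and the pressure all carry $\mu\le1$. Therefore the $L^p$ bootstrap, the $L^2$ Gronwall step, the vorticity estimate and Proposition \ref{prop:weighted_H1est} go through with $\mu$-independent constants. Lemma \ref{lem:uniform-fixed-point-bound-LS} then holds uniformly in $\sigma$.

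With either repair your argument is complete; Step 3 matches the paper.
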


\begin{proof}
	By Proposition~\ref{pro:compactofmap}, the map
	\[
	\mathcal K:X\times[0,1]\longrightarrow X
	\]
	is continuous and compact.
All possible solutions to
	$V=\mathcal K(V, \sigma)$
	has 
	uniform bounds in $X$ by
	Lemma~\ref{lem:uniform-fixed-point-bound-LS}. 
	When $\sigma=0$, it easy to see 
		\[
	V=\mathcal K(V, 0)
	\]
	has a unique solution $V=0$ in $X$.
	The Leray-Schauder fixed point theorem implies that
	we have at least one solution to
		\[
	V=\mathcal K(V, 1),
	\]
which is just a
	solution of \eqref{eq:final-profile-equation-LS}. The remaining statement holds due to Theorem \ref{thm:energyest}.
%	the local
%	regularity argument  for two-dimensional NS equations shows that $V$ is smooth.
\end{proof}

\section{Discretely self-similar solutions with rough initial data}
\label{sec:roughdata}

In this section, we 
 construct  DSS solutions with
initial data  belonging  to
\(L^2_{\mathrm{loc}}(\mathbb R^2\setminus\{0\})\) by  approximating the initial data 
with locally $C^2$ functions.
The essential point for the approximation argument to work is that the estimates in
Theorem~\ref{thm:energyestkey} depend only on the
\(L^2(\mathcal A)\)-norm of the initial datum, and not on its higher
regularity. 

\subsection{Approximation of the $L^2_{\mathrm{loc}}$-initial data }

We first give  an  approximation result for  divergence-free vector field that preserves all the relevant
structures.

\begin{lemma}
	Let
	\(
	u_0\in L^2_{\mathrm{loc}}(\mathbb R^2\setminus\{0\})
\)
	be a divergence-free, \(\lambda\)-DSS vector field and that
	\(\int_{\mathbb{S}^1}	u_0\cdot n d\sigma =0\). Then
	there exists a sequence
	\(
	u_0^{(n)}
	\in C^\infty_{\mathrm{loc}}(\mathbb R^2\setminus\{0\})
	\)
	such that each \(u_0^{(n)}\) is divergence-free, 	\(\lambda\)-DSS, satisfying 	\(\int_{\mathbb{S}^1}	u_0\cdot n d\sigma =0\) and that
	\begin{equation}
		\label{eq:smooth-DSS-approximation}
		u_0^{(n)}\longrightarrow u_0
		\quad\text{strongly in }L^2(\mathcal A).
	\end{equation}
	Moreover, it holds that
	\begin{equation}
		\label{eq:smooth-DSS-uniform-bound}
		\sup_{n\geq1}
		\|u_0^{(n)}\|_{L^2(\mathcal A)}
		\leq
		C\|u_0\|_{L^2(\mathcal A)}.
	\end{equation}
\end{lemma}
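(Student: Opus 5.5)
We will build the approximation by a mollification adapted to the discrete dilation group, acting on a stream function. By the zero-flux condition \eqref{eq:zeroflux} and Remark~\ref{rem:deltaatorigin}, $u_0$ has zero flux across every circle $\partial B_r$. Hence $u_0$ admits a single-valued stream function $\psi\in H^1_{\mathrm{loc}}(\mathbb R^2\setminus\{0\})$ with $u_0=\nabla^\perp\psi$.

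The $\lambda$-DSS property gives $\nabla^\perp\big(\psi(\lambda\,\cdot)\big)=\lambda u_0(\lambda\,\cdot)=u_0$. Therefore $\psi(\lambda x)-\psi(x)=c$ is constant. We do not need to remove $c$: the construction below only requires that $\nabla\psi$ be DSS. Alternatively we can avoid $\psi$ entirely and mollify $u_0$ directly, as follows.

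Pass to logarithmic coordinates $x=e^{\tau}(\cos\theta,\sin\theta)$ and set $\tilde u(\tau,\theta)=e^{\tau}\,R_{-\theta}u_0(e^\tau,\theta)$, the components of $u_0$ in the polar frame multiplied by $r$. DSS means $\tilde u$ is periodic in $\tau$ with period $\log\lambda$ and in $\theta$ with period $2\pi$. The divergence-free condition $\partial_r(ru_r)+\partial_\theta u_\theta=0$ becomes
\[
\partial_\tau \tilde u_r+\partial_\theta\tilde u_\theta=0
\]
on the torus $\mathbb T_{\log\lambda}\times\mathbb T_{2\pi}$. This is a constant-coefficient condition. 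In addition, the $L^2(\mathcal A)$ norm of $u_0$ equals the $L^2$ norm of $\tilde u$ on the torus, exactly as in \eqref{eq:Parseval}.

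The approximation is then elementary. Let $\tilde u^{(n)}=\rho_n*\tilde u$, where $\rho_n$ is a standard mollifier on the torus; equivalently, truncate the Fourier series in $(\tau,\theta)$ to $|m|,|k|\le n$ (Fejér means work equally well). Convolution commutes with $\partial_\tau$ and $\partial_\theta$, so $\partial_\tau\tilde u^{(n)}_r+\partial_\theta\tilde u^{(n)}_\theta=0$. Each $\tilde u^{(n)}$ is smooth and doubly periodic.

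We define $u_0^{(n)}$ by reversing the change of variables. Periodicity in $\tau$ gives the $\lambda$-DSS property, and the transformed equation gives divergence-freeness in $\mathbb R^2\setminus\{0\}$. The flux through $\mathbb S^1$ is $\int_0^{2\pi}\tilde u_r(0,\theta)\,d\theta$, the $(m=0)$-mode of $\tilde u_r$ at $\tau=0$. The divergence equation shows this mode is independent of $\tau$, so it equals its $\tau$-average, i.e. the $(0,0)$ Fourier coefficient of $\tilde u_r$. Mollification preserves that coefficient, so the zero-flux condition passes to $u_0^{(n)}$.

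Convergence in $L^2$ of the torus is standard, and this gives \eqref{eq:smooth-DSS-approximation}. Young's inequality $\|\rho_n*\tilde u\|_{L^2}\le\|\tilde u\|_{L^2}$ gives \eqref{eq:smooth-DSS-uniform-bound} with $C=1$. Smoothness of $\tilde u^{(n)}$ in $(\tau,\theta)$ implies that $u_0^{(n)}\in C^\infty_{\mathrm{loc}}(\mathbb R^2\setminus\{0\})$.

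The only delicate step is the change of variables. One must check carefully that the divergence of the polar components, after multiplication by $r$, becomes exactly the flat divergence in $(\tau,\theta)$ with no lower-order terms. This holds because $r\,\mathrm{div}\,u=\partial_r(ru_r)+\partial_\theta u_\theta$, and $r\partial_r=\partial_\tau$ applied to $ru_r$ produces $\partial_\tau\tilde u_r$ after multiplying through by $r$.

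Everything else reduces to the identity \eqref{eq:Parseval} already used in the proof of Lemma~\ref{lem:L^2bound}.
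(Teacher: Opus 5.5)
Your proposal is correct and follows essentially the same route as the paper. In both, $u_0=\frac1r(a\,e_r+b\,e_\theta)$ with $(a,b)$ doubly periodic in $(\log r,\theta)$, so the divergence-free and zero-flux conditions become $\partial_\tau a+\partial_\theta b=0$ and $\int_0^{2\pi}a\,d\theta=0$, and mollifying on the torus preserves both constraints while giving $L^2$ convergence and the uniform bound with $C=1$. Two small remarks: the stream-function digression is unnecessary, as you note yourself, and your Fourier-mode argument for the flux is a longer version of the paper's direct remark that convolution preserves $\int_0^{2\pi}a(\tau,\theta)\,d\theta=0$.
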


\begin{proof}
	Write \(x=(r,\theta)\) in polar coordinates and set
	\(\tau= \log  r\). Since \(u_0\) is \(\lambda\)-DSS, it can be written as
	\[
	u_0(r,\theta)
	=
	\frac1r
	\bigl(
	a(\tau,\theta)e_r
	+
	b(\tau,\theta)e_\theta
	\bigr),
	\]
	where \(a\) and \(b\) are periodic in \(\tau\) and $\theta$ with period
	\( \log \lambda\) and $2\pi$, respectively.
	Furthermore,
	\[
	\|u_0\|_{L^2(\mathcal A)}^2
	=
	\int_0^{ \log \lambda}\int_0^{2\pi}
	\bigl(|a|^2+|b|^2\bigr)\,d\theta\,d\tau.
	\]
	The divergence-free condition is equivalent to
	\[
	\partial_\tau a+\partial_\theta b=0
	\]
	 on 
	\(
	\mathbb T_{ \log \lambda}\times\mathbb \mathbb{S}^1
\).
	The zero-flux condition is
	\[
	\int_0^{2\pi}a(\tau,\theta)\,d\theta=0.
	\]
	
	Let \(\varphi_n\) be an approximation of identity  and define
	\[
	a_n=\varphi_n*a,
	\quad
	b_n=\varphi_n*b.
	\]
	Then both $a_n$ and $b_n$ are periodic in \(\tau\) and $\theta$ with period
	\( \log \lambda\) and $2\pi$, respectively.
	And that
	\[
	\partial_\tau a_n+\partial_\theta b_n=0,
	\quad
	\int_0^{2\pi}a_n(\tau,\theta)\,d\theta=0.
	\]
	Set
\begin{equation*}
			u_0^{(n)}(r,\theta)
		=
		\frac1r
		\bigl(
		a_n( \log  r,\theta)e_r
		+
		b_n( \log  r,\theta)e_\theta
		\bigr).
\end{equation*}
	Then \(u_0^{(n)}\) is \(\lambda\)-DSS, smooth away from the origin, divergence-free on $\mathbb{R}^2\setminus\{0\}$,
	and has zero-flux. Finally, \eqref{eq:smooth-DSS-approximation} and \eqref{eq:smooth-DSS-uniform-bound} follow from the
	standard properties of  convolution.
\end{proof}

For each \(n\), let \(U_{0,n}\) denote the corresponding heat
profile:
\begin{equation*}
	U_{0,n}(y,s)
	=
	\sqrt t\,
	\bigl(e^{t\Delta}u_0^{(n)}\bigr)(x),
	\quad
	y=\frac{x}{\sqrt t},
	\quad
	s= \log  t.
\end{equation*}
Similarly, let \(U_0\) be the heat profile associated with \(u_0\).
We then have the convergence of the linear profiles.
\begin{lemma}
	For every \(q>2\), we have that
	\begin{equation}
		\label{eq:linear-profile-convergence}
		\sup_{0\leq s\leq T}
		\|U_{0,n}(\cdot,s)-U_0(\cdot,s)\|_{L^q(\mathbb R^2)}
		\longrightarrow0
	\end{equation}
	as \(n\to\infty\). In particular, this implies
	\begin{equation}
		\label{eq:linear-profile-L4-convergence}
		U_{0,n}\longrightarrow U_0
		\quad\text{strongly in }
		L^4((0,T)\times\mathbb R^2),
	\end{equation}
and that
	\begin{equation}
		\label{eq:linear-profile-uniform-L4}
		\sup_n\sup_{0\leq s\leq T}
		\|U_{0,n}(\cdot,s)\|_{L^4(\mathbb R^2)}
		\leq C(A,\lambda),
	\end{equation}
	where
	\(
	A=\|u_0\|_{L^2(\mathcal A)}.
\)
\end{lemma}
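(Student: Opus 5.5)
The plan is to reduce everything to a single linear estimate. By linearity of the heat flow, $U_{0,n}-U_0$ is the heat profile of $f_n:=u_0^{(n)}-u_0$. This field is again $\lambda$-DSS and $(-1)$-homogeneous in the discrete sense, and $\|f_n\|_{L^2(\mathcal A)}\to0$ by \eqref{eq:smooth-DSS-approximation}. So it suffices to prove that for every $q\in(2,\infty)$ and every $\lambda$-DSS field $f$ with $f\in L^2(\mathcal A)$, the associated profile $W_f(y,s)=\sqrt t\,(e^{t\Delta}f)(\sqrt t\,y)$ satisfies
\[
\sup_{0\le s\le T}\|W_f(\cdot,s)\|_{L^q(\mathbb R^2)}\le C(\lambda,q)\,\|f\|_{L^2(\mathcal A)}.
\]
This is precisely the linear content of Lemma~\ref{lem:Linftyestforheat}, cited from \cite[Lemma 3.2]{Tsai17}, with the dependence on $A$ made linear. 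I would verify this homogeneity directly, since the bound there is stated only as $C(A,\lambda,\theta,k)$.

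\textbf{Proof of the linear estimate.} By the scaling computation in Proposition~\ref{lem:L2bound on a period}, $\|W_f(s)\|_{L^q}=t^{\frac12-\frac1q}\|e^{t\Delta}f\|_{L^q}$ with $t=e^s\in[1,\lambda^2]$. It therefore suffices to bound $\|e^{t\Delta}f\|_{L^q}$ uniformly for $t\in[1,\lambda^2]$.

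\begin{itemize}
\item \emph{Decomposition.} Split $f=\sum_{k\in\mathbb Z} f\mathbf 1_{\lambda^k\mathcal A}$. By DSS, $\|f\|_{L^2(\lambda^k\mathcal A)}=\|f\|_{L^2(\mathcal A)}$ for every $k$.
\item \emph{Pieces with $k\le 0$.} These are supported in $B_1$, and $\|f\|_{L^1(\lambda^k\mathcal A)}\le C\lambda^{k}\|f\|_{L^2(\mathcal A)}$ by Cauchy--Schwarz on an annulus of area $\sim\lambda^{2k}$. The sum is therefore a geometric series bounded in $L^1$. Using $\|e^{t\Delta}g\|_{L^q}\le Ct^{-(1-\frac1q)}\|g\|_{L^1}$, this part contributes at most $C\|f\|_{L^2(\mathcal A)}$.
\item \emph{Pieces with $k\ge1$.} The remainder $f\mathbf 1_{|x|\ge\lambda}$ satisfies the pointwise-in-average decay of a $|x|^{-1}$ field. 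I would use the $L^{2,\infty}$ bound $\|f\|_{L^{2,\infty}}\le C(\lambda)\|f\|_{L^2(\mathcal A)}$ (the remark after Theorem~\ref{thm:main}, which is linear in $f$). Since $q>2$, the smoothing estimate $\|e^{t\Delta}g\|_{L^q}\le Ct^{-(\frac12-\frac1q)}\|g\|_{L^{2,\infty}}$ holds: Young's inequality in Lorentz spaces with the Gaussian, which lies in $L^{r,1}$ for $1+\frac1q=\frac1r+\frac12$, $r<2$. This part is also bounded by $C\|f\|_{L^2(\mathcal A)}$.
\end{itemize}

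In fact the third step alone handles all of $f$ at once, which gives the linear estimate cleanly.

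\textbf{Consequences.} Applying the linear estimate to $f_n$ gives \eqref{eq:linear-profile-convergence}. Applying it to $u_0^{(n)}$ together with \eqref{eq:smooth-DSS-uniform-bound} gives \eqref{eq:linear-profile-uniform-L4} with $q=4$. Finally, \eqref{eq:linear-profile-L4-convergence} follows by integrating the case $q=4$ over $s\in(0,T)$:
\[
\|U_{0,n}-U_0\|_{L^4((0,T)\times\mathbb R^2)}\le T^{1/4}\sup_s\|U_{0,n}-U_0\|_{L^4}\to0.
\]

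\textbf{Main obstacle.} The only delicate point is ensuring that the constant is linear in $\|f\|_{L^2(\mathcal A)}$ and does not depend on $A$ in some nonlinear or regularity-dependent way. This is settled by the Lorentz-space argument, which requires only the equivalence $\|f\|_{L^{2,\infty}}\simeq_\lambda\|f\|_{L^2(\mathcal A)}$ for DSS fields. If one prefers to avoid Lorentz spaces, the annular decomposition above can be done by hand: for $k\ge1$, use $L^2\to L^q$ heat smoothing on each piece, plus Gaussian off-diagonal decay to sum over $k$.
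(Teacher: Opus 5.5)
Your proposal is correct and is essentially the paper's proof. By linearity everything reduces to the bound $\|f\|_{L^{2,\infty}}\le C(\lambda)\|f\|_{L^2(\mathcal A)}$ for DSS fields, combined with $L^{2,\infty}\to L^q$ heat smoothing for $t\in[1,\lambda^2]$; the paper applies $e^{\Delta}$ to $e^{s/2}u_0(e^{s/2}\cdot)$, which is equivalent to your factor $t^{\frac12-\frac1q}$.

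One caveat concerns the optional by-hand annular variant you sketch at the end. It would not close as literally stated: every piece $f\mathbf 1_{\lambda^k\mathcal A}$ has the same $L^2$ norm, so plain $L^2\to L^q$ smoothing gives no decay in $k$ and the sum over $k$ is not controlled. Your main Lorentz-space argument does not rely on this variant, so the proof stands.
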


\begin{proof}
	For a \(\lambda\)-DSS function \(f\), one has
	\[
	\|f\|_{L^{2,\infty}(\mathbb R^2)}
	\leq C(\lambda)\|f\|_{L^2(\mathcal A)}.
	\]
	Consequently, \eqref{eq:smooth-DSS-approximation} implies
	\[
	\|u_0^{(n)}-u_0\|_{L^{2,\infty}(\mathbb R^2)}
	\longrightarrow0.
	\]
	For \(s\in[0,T]\), define
	\[
	\widetilde u_{0,n}(y,s)
	=
	e^{s/2}u_0^{(n)}(e^{s/2}y),
	\quad
	\widetilde u_0(y,s)
	=
	e^{s/2}u_0(e^{s/2}y),
	\]
 and
	\[
	U_{0,n}(\cdot,s)=e^\Delta\widetilde u_{0,n}(\cdot,s),
	\quad
	U_0(\cdot,s)=e^\Delta\widetilde u_0(\cdot,s).
	\]
	The heat kernel estimate from \(L^{2,\infty}\) to \(L^q\), \(q>2\),
	therefore gives for  \(s\in[0,T]\) that
	\[
	\begin{aligned}
		\|U_{0,n}(\cdot,s)-U_0(\cdot,s)\|_{L^q}
		&\leq
		C(q)
		\|\widetilde u_{0,n}(\cdot,s)
		-\widetilde u_0(\cdot,s)\|_{L^{2,\infty}}  \\
		&\leq
		C(q,\lambda)
		\|u_0^{(n)}-u_0\|_{L^2(\mathcal A)}.
	\end{aligned}
	\]
	This estimate is uniform in \(s\in[0,T]\), and hence proves
	\eqref{eq:linear-profile-convergence}. The remaining conclusions follow
	immediately.
\end{proof}

\subsection{Existence of DSS solutions for $L^2_{loc}$ initial data}

For each \(n\), by the results in Section \ref{sec:LS-construction-smooth-data}, there exists a smooth
\(T\)-periodic profile \(V_n\) solves the following system
\begin{equation}
	\label{eq:Vn-equation}
	\left\{
	\begin{aligned}
		\partial_sV_n-\Delta V_n-\frac12V_n-\frac12y\cdot\nabla V_n
		&+
		(U_{0,n}+V_n)\cdot\nabla(U_{0,n}+V_n)
		+\nabla\Pi_n=0, \\
		\nabla\cdot V_n&=0,\\
		V_n(\cdot,s+T)&=V_n(\cdot,s).
	\end{aligned}
	\right.
\end{equation}
By Theorem \ref{thm:energyestkey} and
\eqref{eq:smooth-DSS-uniform-bound}, we have
\begin{equation}
	\label{eq:Vn-uniform-energy}
	\sup_n
	\left[
	\sup_{0\leq s\leq T}\|V_n(s)\|_{L^2}^2
	+
	\int_0^T\|V_n(s)\|_{H^1}^2\,ds
	\right]
	\leq C(A,\lambda),
\end{equation}
where $A=\|u_0\|_{L^2(\mathcal A)}$.
Moreover, for every fixed \(1<p<2\), it holds that
\begin{equation}\label{eq:Vn-uniform-energy1}
	\sup_n\sup_{0\leq s\leq T}
	\|V_n(s)\|_{L^p}
	\leq C(A,\lambda,p).
\end{equation}
The Ladyzhenskaya inequality gives
\begin{equation*}
	\|V_n(s)\|_{L^4}^4
	\leq
	C\|V_n(s)\|_{L^2}^2
	\|\nabla V_n(s)\|_{L^2}^2.
\end{equation*}
Therefore,
\begin{equation}
	\label{eq:Vn-uniform-L4}
	\sup_n
	\int_0^T\int_{\mathbb R^2}|V_n|^4\,dy\,ds
	\leq C(A,\lambda).
\end{equation}

We next derive the time derivative estimates needed to get the strong convergence of $V_n$.

\begin{lemma}[Estimate for the time derivative]
	\label{lem:local-time-derivative}
	For every \(R>0\), we have
	\begin{equation*}
		\sup_n
		\|\partial_sV_n\|_{L^2(0,T;H^{-1}(B_R))}
		\leq C(A,\lambda,R).
	\end{equation*}
\end{lemma}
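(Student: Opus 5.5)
We prove the bound by duality: we test the equation \eqref{eq:Vn-equation} against divergence-free test functions supported in $B_R$ and bound every term using only the uniform bounds \eqref{eq:Vn-uniform-energy}, \eqref{eq:Vn-uniform-L4} and \eqref{eq:linear-profile-uniform-L4}. Concretely, fix $\varphi\in C_c^\infty(B_R;\mathbb R^2)$ with $\nabla\cdot\varphi=0$. The pressure term then drops out, and for a.e.\ $s$ we have
\begin{equation*}
\langle\partial_sV_n,\varphi\rangle
=-\int\nabla V_n:\nabla\varphi+\frac12\int V_n\cdot\varphi+\frac12\int(y\cdot\nabla V_n)\cdot\varphi+\int (U_{0,n}+V_n)\otimes(U_{0,n}+V_n):\nabla\varphi .
\end{equation*}
Here we used $\nabla\cdot(U_{0,n}+V_n)=0$ to write the convection term in divergence form.

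We bound each term by $\|\varphi\|_{H^1_0(B_R)}$ times a function of $s$ that lies in $L^2(0,T)$ uniformly in $n$:
\begin{itemize}
\item The viscous and zeroth order terms are at most $\|V_n(s)\|_{H^1}\|\varphi\|_{H^1}$.
\item For the drift term, $|y|\le R$ on the support, so it is at most $\frac R2\|\nabla V_n(s)\|_{L^2}\|\varphi\|_{L^2}$.
\item The nonlinear term is at most $\|U_{0,n}+V_n\|_{L^4}^2\|\nabla\varphi\|_{L^2}$.
\end{itemize}
The time integral of the square of the nonlinear factor is $\int_0^T\|U_{0,n}+V_n\|_{L^4}^4\,ds\le C(A,\lambda)$, by \eqref{eq:Vn-uniform-L4} and \eqref{eq:linear-profile-uniform-L4}. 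Squaring and integrating over $(0,T)$ therefore gives $C(A,\lambda,R)$ in each case.

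The main subtlety is the pressure, which does not drop out when the test function is an arbitrary element of $H^1_0(B_R)$. The cleanest route is to interpret the $H^{-1}(B_R)$ norm as the dual of the divergence-free subspace of $H^1_0(B_R)$. This is what the Aubin--Lions argument needs anyway, since the compactness will be applied to divergence-free fields.

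If a genuine $H^{-1}(B_R)$ bound is wanted, we instead estimate $\Pi_n$ directly. In two dimensions the pressure solves $-\Delta\Pi_n=\partial_i\partial_j\big((U_{0,n}+V_n)_i(U_{0,n}+V_n)_j\big)$. The linear drift terms contribute nothing to this equation, because $\nabla\cdot\big(-\frac12V-\frac12y\cdot\nabla V\big)=0$ for divergence-free $V$. Calder\'on--Zygmund estimates then give
\begin{equation*}
\|\Pi_n(s)\|_{L^2}\le C\|U_{0,n}+V_n\|_{L^4}^2\in L^2(0,T),
\end{equation*}
and hence $\|\nabla\Pi_n\|_{L^2(0,T;H^{-1}(B_R))}\le C(A,\lambda)$. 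Taking the supremum over $\|\varphi\|_{H^1_0}\le1$ then yields the claimed bound.
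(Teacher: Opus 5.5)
Your proposal is correct and follows essentially the paper's argument: each term of \eqref{eq:Vn-equation} is bounded in $L^2(0,T;H^{-1}(B_R))$ using \eqref{eq:Vn-uniform-energy}, \eqref{eq:Vn-uniform-L4} and \eqref{eq:linear-profile-uniform-L4}. As in the paper, the pressure is handled through $\Pi_n=R_iR_j[(U_{0,n}+V_n)_i(U_{0,n}+V_n)_j]$ and the Calder\'on--Zygmund bound $\|\Pi_n(s)\|_{L^2}\le C\|U_{0,n}+V_n\|_{L^4}^2$, and this already gives the genuine $H^{-1}(B_R)$ bound, so the detour through the divergence-free dual is unnecessary.
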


\begin{proof}
	The  pressure is given by
	\begin{equation*}
		\Pi_n
		=
		R_iR_j
		\left[
		(U_{0,n}+V_n)_i
		(U_{0,n}+V_n)_j,
		\right]
	\end{equation*}
	where $R_i$ is the Riesz transformation. 
	 By the Calder\'{o}n-Zygmund
	estimates,
	\[
	\|\Pi_n(s)\|_{L^2}
	\leq
	C\|U_{0,n}(s)+V_n(s)\|_{L^4}^2.
	\]
	Using \eqref{eq:linear-profile-uniform-L4} and
	\eqref{eq:Vn-uniform-L4}, we obtain
	\begin{equation}
		\label{eq:pressure-uniform-L2}
		\sup_n
		\int_0^T\|\Pi_n(s)\|_{L^2}^2\,ds
		\leq C(A,\lambda).
	\end{equation}
	Fix \(R>0\). On \(B_R\), the terms $\Delta V_n$, $V_n$, and $y\cdot\nabla V_n$
	are uniformly bounded in \(L^2(0,T;H^{-1}(B_R))\). Moreover,
	\[
(U_{0,n}+V_n)\cdot\nabla(U_{0,n}+V_n)
	=
	\nabla\cdot
	\left[
	(U_{0,n}+V_n)  \otimes  (U_{0,n}+V_n)
	\right]
	\]
	is uniformly bounded in \(L^2(0,T;H^{-1}(B_R))\), since
	\(
	U_{0,n}+V_n
	\)
	is uniformly bounded in \(L^4((0,T)\times\mathbb R^2)\). Finally,
	\(\nabla\Pi_n\) is uniformly bounded in
	\(L^2(0,T;H^{-1}(B_R))\) by
	\eqref{eq:pressure-uniform-L2}. The conclusion follows from
	\eqref{eq:Vn-equation}.
\end{proof}

\begin{proposition}[Strong convergence of the approximate sequences]
	There exists a subsequence, still denoted by \(\{V_n\}\), and a
	\(T\)-periodic vector field \(V\) such that
	\begin{equation*}
		V_n\rightharpoonup^\ast V
		\quad\text{weakly-* in }
		L^\infty(0,T;L^2(\mathbb R^2)),
	\end{equation*}
	\begin{equation*}
		V_n\rightharpoonup V
		\quad\text{weakly in }
		L^2(0,T;H^1(\mathbb R^2)),
	\end{equation*}
	and, for every \(R>0\),
	\begin{equation}
		\label{eq:Vn-local-strong-L2}
		V_n\longrightarrow V
		\quad\text{strongly in }
		L^2((0,T)\times B_R).
	\end{equation}
	Furthermore, we have
	\begin{equation}
		\label{eq:V-limit-energy}
		V\in
		L^\infty(0,T;L^2(\mathbb R^2))
		\cap
		L^2(0,T;H^1(\mathbb R^2)),
	\end{equation}
	and
	\begin{equation}
		\label{eq:V-limit-energy-bound}
		\sup_{0\leq s\leq T}\|V(s)\|_{L^2}^2
		+
		\int_0^T\|V(s)\|_{H^1}^2\,ds
		\leq C(A,\lambda).
	\end{equation}
	For every \(1<p<2\), one has
	\begin{equation}
		\label{eq:V-limit-Lp}
		\operatorname*{ess\,sup}_{0\leq s\leq T}
		\|V(s)\|_{L^p}
		\leq C(A,\lambda,p).
	\end{equation}
\end{proposition}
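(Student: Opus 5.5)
The plan is a standard compactness argument that uses only the uniform bounds already in hand, namely \eqref{eq:Vn-uniform-energy}, \eqref{eq:Vn-uniform-energy1} and Lemma~\ref{lem:local-time-derivative}.

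\textbf{Weak limits.} By \eqref{eq:Vn-uniform-energy}, $\{V_n\}$ is bounded in $L^\infty(0,T;L^2(\mathbb R^2))$ and in $L^2(0,T;H^1(\mathbb R^2))$. Banach--Alaoglu then yields a subsequence and a limit $V$ with $V_n\rightharpoonup^\ast V$ weakly-$*$ in $L^\infty(0,T;L^2)$ and $V_n\rightharpoonup V$ weakly in $L^2(0,T;H^1)$. The two limits coincide as distributions, so $V$ is well defined.

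\textbf{Properties of the limit.} Periodicity and the divergence-free condition are linear constraints, so they survive weak limits. To see periodicity, view each $V_n$ as a function on $\mathbb R^2\times\mathbb R$; then $V_n(\cdot,\cdot+T)=V_n$ passes to the limit tested against compactly supported test functions. The bounds \eqref{eq:V-limit-energy} and \eqref{eq:V-limit-energy-bound} follow from weak-$*$ and weak lower semicontinuity of the norms.

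For \eqref{eq:V-limit-Lp}, fix $1<p<2$. By \eqref{eq:Vn-uniform-energy1}, $V_n$ is bounded in $L^\infty(0,T;L^p)=(L^1(0,T;L^{p'}))^\ast$. Extracting a further subsequence and identifying the weak-$*$ limits through test functions in $C_c^\infty$ gives the same $V$, and lower semicontinuity gives the bound. We do this for a countable sequence $p_k\to1$ and $p_k\to2$ and diagonalize. For arbitrary $p\in(1,2)$ we then interpolate between the two bracketing exponents, which is enough since constants may depend on $p$. Alternatively, we simply extract anew for each $p$, since the limit $V$ is already identified.

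\textbf{Local strong convergence.} This is the only nontrivial step. Fix $R$. On $B_R$, $V_n$ is bounded in $L^2(0,T;H^1(B_R))$, and Lemma~\ref{lem:local-time-derivative} bounds $\partial_sV_n$ in $L^2(0,T;H^{-1}(B_R))$. Since $H^1(B_R)\Subset L^2(B_R)\hookrightarrow H^{-1}(B_R)$, the Aubin--Lions lemma gives precompactness of $\{V_n\}$ in $L^2((0,T)\times B_R)$. The strong limit must equal $V$ by uniqueness of distributional limits.

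A diagonal argument over $R=1,2,3,\dots$ produces one subsequence that converges strongly on every ball, which is \eqref{eq:Vn-local-strong-L2}. The main care point is that all the extractions (the weak limits, the several $p$'s, and the radii $R$) are done along nested subsequences so that a single subsequence works for every assertion simultaneously.
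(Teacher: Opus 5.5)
Your proposal is correct and follows the same route as the paper: Banach--Alaoglu gives the weak and weak-$*$ limits from the uniform energy bound. Aubin--Lions on each ball $B_R$ then gives local strong convergence, using the bound on $\partial_s V_n$ in $L^2(0,T;H^{-1}(B_R))$, followed by a diagonal argument over $R$ and lower semicontinuity for the limiting $L^2$, $H^1$ and $L^p$ bounds. Your treatment of periodicity (passing $V_n(\cdot,\cdot+T)=V_n$ to the limit on $\mathbb{R}^2\times\mathbb{R}$) and your identification of the weak-$*$ limit in $L^\infty(0,T;L^p)$ with $V$ only fill in details the paper states in one line.
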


\begin{proof}
	The weak and weak-* convergences follow from
	\eqref{eq:Vn-uniform-energy}. For each \(R>0\), the sequence
	\(\{V_n\}\) is bounded in \(L^2(0,T;H^1(B_R))\), while
	Lemma~\ref{lem:local-time-derivative} gives a uniform bound for
	\(\{\partial_sV_n\}\) in \(L^2(0,T;H^{-1}(B_R))\). Since
	\[
	H^1(B_R)\subset\subset L^2(B_R)\hookrightarrow H^{-1}(B_R),
	\]
	the Aubin-Lions lemma yields strong convergence in
	\(L^2((0,T)\times B_R)\). A diagonal argument yields a 
	subsequence satisfying \eqref{eq:Vn-local-strong-L2}.
	The weak limit is therefore also \(T\)-periodic. The estimates
	\eqref{eq:V-limit-energy-bound} and \eqref{eq:V-limit-Lp} follow from
	 lower semicontinuity for the weak convergence and the corresponding uniform bounds for $V_n$ in \eqref{eq:Vn-uniform-energy} and \eqref{eq:Vn-uniform-energy1}.
\end{proof}

We next show that the limit $V$ solves \eqref{eq:difference}.
Let
\(
\Phi\in
C^\infty(\mathbb T_T;
C^\infty_{0,\sigma}(\mathbb R^2))
\)
be a smooth, compactly supported, divergence-free and \(T\)-periodic
test function. The weak formulation of \eqref{eq:Vn-equation} is
\begin{equation}
	\label{eq:Vn-weak-formulation}
	\begin{aligned}
		&-\int_0^T\int_{\mathbb R^2}
		V_n\cdot\partial_s\Phi\,dy\,ds
		+
		\int_0^T\int_{\mathbb R^2}
		\nabla V_n:\nabla\Phi\,dy\,ds  \\
		&
		-
		\frac12\int_0^T\int_{\mathbb R^2}
		V_n\cdot\Phi\,dy\,ds
		-
		\frac12\int_0^T\int_{\mathbb R^2}
		(y\cdot\nabla V_n)\cdot\Phi\,dy\,ds  \\
		&
		-
		\int_0^T\int_{\mathbb R^2}
		\bigl(
		(U_{0,n}+V_n)
		  \otimes  
		(U_{0,n}+V_n)
		\bigr):\nabla\Phi\,dy\,ds
		=0.
	\end{aligned}
\end{equation}
Let \(K\subset\mathbb R^2\) be a compact set containing the spatial
support of \(\Phi\). By
\eqref{eq:linear-profile-L4-convergence} and
\eqref{eq:Vn-local-strong-L2},
\[
U_{0,n}+V_n
\longrightarrow
U_0+V
\quad\text{strongly in }
L^2((0,T)\times K).
\]
Consequently,
\begin{equation*}
	\begin{aligned}
		&(U_{0,n}+V_n)  \otimes  (U_{0,n}+V_n)
\longrightarrow
		(U_0+V)  \otimes  (U_0+V)
		\quad\text{strongly in }
		L^1((0,T)\times K).
	\end{aligned}
\end{equation*}
Passing to the limit in \eqref{eq:Vn-weak-formulation}, we obtain
\begin{equation*}
	\label{eq:V-limit-weak-formulation}
	\begin{aligned}
		&-\int_0^T\int_{\mathbb R^2}
		V\cdot\partial_s\Phi\,dy\,ds
		+
		\int_0^T\int_{\mathbb R^2}
		\nabla V:\nabla\Phi\,dy\,ds  \\
		&\quad
		-
		\frac12\int_0^T\int_{\mathbb R^2}
		V\cdot\Phi\,dy\,ds
		-
		\frac12\int_0^T\int_{\mathbb R^2}
		(y\cdot\nabla V)\cdot\Phi\,dy\,ds  \\
		&\quad
		-
		\int_0^T\int_{\mathbb R^2}
		\bigl(
		(U_0+V)  \otimes  (U_0+V)
		\bigr):\nabla\Phi\,dy\,ds
		=0.
	\end{aligned}
\end{equation*}
Thus \(V\) is a \(T\)-periodic weak solution of
\eqref{eq:difference}.

\subsection{Continuity in time and the initial condition}\label{sec:intialdata}

To verify the initial condition in physical variables, we  show
that the limiting profile is  continuous in \(L^2\).

\begin{lemma}
	The limiting profile \(V\) has a representative satisfying
	\begin{equation}
		\label{eq:V-continuous-L2}
		V\in C(\mathbb T_T;L^2(\mathbb R^2)).
	\end{equation}
	Consequently,
	\begin{equation}
		\label{eq:V-uniform-tail}
		\lim_{R\to\infty}
		\sup_{0\leq s\leq T}
		\int_{|y|>R}|V(y,s)|^2\,dy
		=0.
	\end{equation}
\end{lemma}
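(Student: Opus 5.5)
The plan is to apply the Lions--Magenes lemma (Temam, Chapter III, Lemma 1.2) in the Gelfand triple $H^1_\sigma\subset L^2_\sigma\subset (H^1_\sigma)'$. To do so we need $V\in L^2(0,T;H^1(\mathbb R^2))$, which is \eqref{eq:V-limit-energy}, and
\[
\partial_sV\in L^2(0,T;(H^1_\sigma(\mathbb R^2))').
\]
Only the latter is new, since Lemma \ref{lem:local-time-derivative} is local in $B_R$. We get it from the weak formulation of the limit equation. For divergence-free test fields, the distribution $\partial_sV$ is given by
\[
\Delta V+\tfrac12V+\tfrac12 y\cdot\nabla V-\nabla\cdot\bigl((U_0+V)\otimes(U_0+V)\bigr),
\]
with the pressure dropping out. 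Each term is estimated against $\Phi\in H^1_\sigma$ as follows.
\begin{itemize}
\item The term $\Delta V$ is controlled by $\|\nabla V\|_{L^2}\|\nabla\Phi\|_{L^2}$.
\item The term $V$ is controlled by $\|V\|_{L^2}\|\Phi\|_{L^2}$.
\item The nonlinear term is controlled by $\|U_0+V\|_{L^4}^2\|\nabla\Phi\|_{L^2}$. This lies in $L^2_s$ by Lemma \ref{lem:Linftyestforheat} (with $\theta=4$), by \eqref{eq:V-limit-energy-bound}, and by Ladyzhenskaya's inequality, which puts $V$ in $L^4_{s,y}$.
\end{itemize}

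The delicate term is the drift $\tfrac12 y\cdot\nabla V$: the weight $y$ is unbounded, so $y\cdot\nabla V\notin L^2_sH^{-1}$ in general. We do not need that. In Lions--Magenes the duality $\langle\partial_sV,V\rangle$ only enters through the energy identity, and the drift paired with $V$ gives $-\tfrac12\|V\|_{L^2}^2$ exactly, by \eqref{eq:L2criticalityintro}. So the plan is to remove the drift by the change of variables that undoes the self-similar scaling. Set $\widetilde V(z,s)=e^{s/2}V(e^{s/2}z,s)$, which in physical variables is just $u-e^{t\Delta}u_0$ at fixed $t=e^s$. Then $\widetilde V$ solves a Navier--Stokes-type equation with variable coefficient $e^{-s}\Delta$ and no drift. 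On the compact interval $s\in[0,2T]$ the rescaling is a bounded isomorphism of $L^2$ and $H^1$ (the $L^2$ norm is preserved in 2D). Hence $\widetilde V\in L^2(H^1)$ and $\partial_s\widetilde V\in L^2((H^1_\sigma)')$ by the estimates above without the drift. Lions--Magenes then gives $\widetilde V\in C([0,2T];L^2)$, and rescaling back gives $V\in C([0,2T];L^2)$. Combined with $T$-periodicity this yields \eqref{eq:V-continuous-L2}. An alternative is to mollify in space and check the drift commutator; the scaling trick avoids that.

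For \eqref{eq:V-uniform-tail}, continuity on the compact set $\mathbb T_T$ makes $\{V(s)\}$ a compact subset of $L^2(\mathbb R^2)$. Given $\varepsilon>0$, cover it by finitely many $L^2$-balls of radius $\varepsilon$ centered at $V(s_j)$. Choose $R$ with $\|V(s_j)\|_{L^2(|y|>R)}<\varepsilon$ for all $j$. Then $\sup_s\|V(s)\|_{L^2(|y|>R)}<2\varepsilon$.

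The main obstacle is justifying $\partial_s\widetilde V\in L^2(0,2T;(H^1_\sigma)')$ rigorously from the weak formulation, in particular identifying the distributional time derivative with divergence-free test functions that are not compactly supported. We handle this by density of $C^\infty_{0,\sigma}$ in $H^1_\sigma(\mathbb R^2)$.
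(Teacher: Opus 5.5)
Your route is the paper's. You pass to the physical-variable field on a compact time window, where the drift disappears. You get the time derivative in $L^2$ with values in a dual space; the paper applies $\mathbb P$ and uses $H^{-1}$, you use $(H^1_\sigma)'$, and these are equivalent. You then apply Lions--Magenes, undo the dilation, and read off the tail bound from compactness of $\{V(s)\}$ in $L^2$.

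There is one slip to fix. The rescaling you wrote, $\widetilde V(z,s)=e^{s/2}V(e^{s/2}z,s)$, goes the wrong way. A direct computation gives
\[
\partial_s\widetilde V=\widetilde V+z\cdot\nabla\widetilde V+e^{-s}\bigl(\Delta\widetilde V-\ldots\bigr),
\]
so the zeroth-order term and the drift are doubled, not cancelled. The field that equals $u-e^{t\Delta}u_0$ at $t=e^s$, which is what you say you want, is $\widetilde V(z,s)=e^{-s/2}V(e^{-s/2}z,s)$. It satisfies
\[
\partial_s\widetilde V=e^{s}\Bigl(\Delta\widetilde V-\mathbb P\nabla\cdot\bigl((a+\widetilde V)\otimes(a+\widetilde V)\bigr)\Bigr),\qquad a(\cdot,s)=e^{t\Delta}u_0\ \text{with } t=e^s .
\]
So the coefficient is $e^{s}$ rather than $e^{-s}$, which is harmless on $[0,2T]$.

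When rescaling back, unitarity of each dilation is not enough on its own. You also need the dilation group to be strongly continuous on $L^2$ to conclude that $s\mapsto V(s)$ is continuous; the paper states exactly this.

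Working on $[0,2T]$ rather than one period is a small improvement on the paper. It gives continuity across $s=T$ directly, instead of inferring $V(\cdot,0)=V(\cdot,T)$ from distributional periodicity.
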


\begin{proof}
	For \(t\in[1,\lambda^2]\), define
	\begin{equation*}
		v(x,t)
		=
		\frac1{\sqrt t}
		V\left(\frac{x}{\sqrt t}, \log  t\right),
	\end{equation*}
	and set
	\[
	a(x,t)=e^{t\Delta}u_0(x).
	\]
	The change of variables between \((y,s)\) and \((x,t)\), together with
	\eqref{eq:V-limit-energy}, gives
	\[
	v\in
	L^\infty(1,\lambda^2;L^2(\mathbb R^2))
	\cap
	L^2(1,\lambda^2;H^1(\mathbb R^2)).
	\]
	Moreover, the two-dimensional Ladyzhenskaya inequality gives
	$v\in L^4((1,\lambda^2)\times\mathbb R^2)$.
	The linear field \(a\) satisfies that
	$
	a\in L^\infty(1,\lambda^2;L^4(\mathbb R^2))$.
	We note \(v\) satisfies 
	\begin{equation*}
		\partial_t v-\Delta v
		+
		\mathbb P\nabla\cdot
		\left((a+v)  \otimes  (a+v)\right)
		=0
	\end{equation*}
 Since
	\[
	(a+v)  \otimes  (a+v)
	\in
	L^2(1,\lambda^2;L^2(\mathbb R^2)),
	\]
	we have
	$\partial_tv
	\in
	L^2(1,\lambda^2;H^{-1}(\mathbb R^2))$.
	The Lions-Magenes lemma therefore yields
	$v\in C([1,\lambda^2];L^2(\mathbb R^2))$.

	For \(s\in[0,T]\),
	\[
	V(y,s)
	=
	e^{s/2}v(e^{s/2}y,e^s).
	\]
	The dilation
	\[
	f(x)\longmapsto e^{s/2}f(e^{s/2}x)
	\]
	is unitary on \(L^2(\mathbb R^2)\) and depends strongly continuously
	on \(s\). Hence
$V\in C([0,T];L^2(\mathbb R^2))$.
	Since \(V\) is \(T\)-periodic in the sense of distributions, its
	continuous representative satisfies
	\[
	V(\cdot,0)=V(\cdot,T)
	\quad\text{in }L^2(\mathbb R^2).
	\]
	This proves \eqref{eq:V-continuous-L2}.
	
	It remains to prove the uniform tail estimate. The set
$\mathcal K
	=
	\{V(\cdot,s):0\leq s\leq T\}$
 being the continuous image of the
	compact interval \([0,T]\), 
	is compact in \(L^2(\mathbb R^2)\). Compact subsets of \(L^2(\mathbb R^2)\)
	have uniformly  spatial tails. Thus, it holds that
	\[
	\sup_{0\leq s\leq T}
	\int_{|y|>R}|V(y,s)|^2\,dy
	\longrightarrow0\quad 	\text{as } R\to\infty.
	\]
This finishes the proof of the lemma.
\end{proof}

We are now ready to prove that $u$ is a finite energy perturbed solution.  It is easy to see
\[
u-e^{t\Delta}u_0
\in
L^2(\varepsilon,T_1;H^1(\mathbb R^2)).
\]
Finally, let
\(
K\Subset\mathbb R^2\setminus\{0\} 
\text{ and }
d_K=\operatorname{dist}(K,0)>0.
\)
One has
\begin{align*}
	\|u(t)-e^{t\Delta}u_0\|_{L^2(K)}^2
	&=
	\int_K
	\frac1t
	\left|
	V\left(\frac{x}{\sqrt t}, \log  t\right)
	\right|^2\,dx \notag\\
	&=
	\int_{K/\sqrt t}
	|V(y, \log  t)|^2\,dy \notag\\
	&\leq
	\sup_{0\leq s\leq T}
	\int_{|y|\geq d_K/\sqrt t}
	|V(y,s)|^2\,dy.
\end{align*}
By \eqref{eq:V-uniform-tail}, the right-hand side converges to zero
as \(t\to0\). Therefore,
\begin{equation*}
	\lim_{t\to0}
	\|u(t)-e^{t\Delta}u_0\|_{L^2(K)}
	=0.
\end{equation*}
Since \(K\subset\subset\mathbb R^2\setminus\{0\}\) is arbitrary, \(u\) is an
energy perturbed solution in the sense of
Definition~\ref{def:energy persol}.

\appendix
\section{Technical proofs}\label{sec:App}
In this appendix, we give the proof of Lemmas \ref{lem:Estimates for modified linear part} and \ref{lem:weighted-profile-Stokes}.
\begin{proof}[Proof of Lemma \ref{lem:Estimates for modified linear part}]
		The smoothness of $U_1$ and $\nabla \cdot U_1=0$ is clear from the construction. We  focus on proving \eqref{eq:estw}-\eqref{eq:smallness}.
		Estimates \eqref{eq:estnalbav1} and \eqref{eq:estv1} are easy to see using Lemma \ref{lem:Linftyestforheat} and \eqref{eq:estw}-\eqref{eq:estsw}.
		We  first prove \eqref{eq:estw}, and we only  consider the case for $|y|\geq 3R_0$, as the boundedness and smoothness of $w$ inside $|y|\leq 3R_0$ are implied directly by the boundedness  of $U_0$ in Lemma \ref{lem:Linftyestforheat}.
		Since 
		$U_0 \cdot \nabla  \eta_{R_0}$ is smooth and has compact support in $B_{2R_0}\setminus B_{R_0}$, we have
		\begin{equation*}
\int_{\mathbb{R}^2} U_0 \cdot \nabla  \eta_{R_0} = \int_{B_{2R_0}\setminus B_{R_0}} \textrm{ div } (\eta_{R_0} U_0) =  \int_{\partial B_{2R_0}}  U_0\cdot n d\sigma=0.
		\end{equation*}
If $|y|\geq 3R_0$, then for  $|z|\leq 2R_0$, one has
\[
\left|
\frac{y-z}{|y-z|^2}
-
\frac{y}{|y|^2}
\right|
\leq
C\frac{|z|}{|y|^2}
\leq
C\frac{R_0}{|y|^2}.
\]
Hence, using Lemma \ref{lem:Linftyestforheat}, it holds that
	\begin{equation*}
				\begin{aligned}
						|w(y,s)| \leq 
		 \int_{B_{2R_0}}  \frac{CR_0}{|y|^2}|U_0 \cdot \nabla  \eta_{R_0}|dz \leq \frac{C(A, \lambda, R_0)}{|y|^2}.
					\end{aligned}
			\end{equation*}
		This, together with the boundedness of $w$ inside $B_{3R_0}$, shows that 
		\begin{equation*}
			|  w(y,s)|\leq \frac{C(A,  \lambda, R_0)}{1+ |y|^2}.
		\end{equation*}
		Taking $\sup$ in $s\in[0,T]$, we have 
			\begin{equation*}
			\sup_{0\leq s\leq T} |  w(y,s)|\leq \frac{C(A,  \lambda, R_0)}{1+ |y|^2}.
		\end{equation*}
		The proof for \[
        \sup_{0\leq s\leq T} |  \nabla^k w(y,s)|\leq \frac{C(A,\lambda, k, R_0)}{1+|y|^{k+2}},\quad  k\geq 1,
        \]
		is similar to the above calculation by taking the derivatives into the kernel and using the property $\int_{\mathbb{R}^2} U_0 \cdot \nabla  \eta_{R_0} = 0$.  We omit details.
		
		We next prove the corresponding estimate for $\partial_s w$.
		Differentiating \eqref{eq:corr} with respect to $s$, we get
		\begin{equation*}
			\partial_s w(y,s)
			=
			-\frac{1}{2\pi}
			\int_{\mathbb R^2}
			\frac{y-z}{|y-z|^2}
			\, \partial_s U_0(z,s)\cdot\nabla\eta_{R_0}(z)\,dz .
		\end{equation*}
		Here the differentiation under the integral sign is justified since
		$\nabla\eta_{R_0}$ is compactly supported in
		$B_{2R_0}\setminus B_{R_0}$ and $U_0$ is smooth for all $s$.
		Since $\nabla\cdot U_0=0$, we also have
		$\nabla\cdot \partial_s U_0=0$.
 Hence the flux of
		$\partial_s U_0$ through every circle vanishes
		\[
		\int_{\partial B_r} \partial_s U_0\cdot n\,d\sigma
		=
		\partial_s\int_{\partial B_r} U_0\cdot n\,d\sigma
		=
		\partial_s\int_{B_r}\nabla\cdot U_0\,dy
		=0 .
		\]
		Therefore,
		\begin{equation}\label{eq:cancellation-sw}
			\int_{\mathbb R^2}
			\partial_s U_0(z,s)\cdot\nabla\eta_{R_0}(z)\,dz
			=
			\int_{B_{2R_0}\setminus B_{R_0}}
			\nabla\cdot\bigl(\eta_{R_0}\partial_s U_0\bigr)\,dz
			=
			\int_{\partial B_{2R_0}}\partial_s U_0\cdot n\,d\sigma
			=
			0 .
		\end{equation}
		We also note that, by \eqref{eq:equforheateq},
		\[
		\partial_s U_0
		=
		\Delta U_0+\frac12 U_0+\frac12 y\cdot\nabla U_0 .
		\]
		Since $\nabla\eta_{R_0}$ is supported in the fixed annulus
		$B_{2R_0}\setminus B_{R_0}$, Lemma \ref{lem:Linftyestforheat} implies
		\begin{equation}\label{eq:source-sw-bound}
			\sup_{0\leq s\leq T}
			\left\|
			\partial_s U_0(\cdot,s)\cdot\nabla\eta_{R_0}
			\right\|_{L^1(\mathbb R^2)}
			\leq C(A,\lambda,R_0).
		\end{equation}
	For $|y|\ge 3R_0$, using \eqref{eq:cancellation-sw}, we may write
	\[
	\partial_s w(y,s)
	=
	-\frac{1}{2\pi}
	\int_{B_{2R_0}}
	\left(
	\frac{y-z}{|y-z|^2}
	-
	\frac{y}{|y|^2}
	\right)
	\partial_s U_0(z,s)\cdot\nabla\eta_{R_0}(z)\,dz .
	\]
	Since $|z|\leq 2R_0$ and $|y|\ge3R_0$, one has
	\[
	\left|
	\frac{y-z}{|y-z|^2}
	-
	\frac{y}{|y|^2}
	\right|
	\leq
	C\frac{|z|}{|y|^2}
	\leq
	C\frac{R_0}{|y|^2}.
	\]
	Together with \eqref{eq:source-sw-bound}, this yields
	\[
	|\partial_s w(y,s)|
	\leq
	\frac{C(A,\lambda,R_0)}{|y|^2},
	\quad |y|\ge 3R_0.
	\]
	For higher derivatives, the proof	is similar to the above calculations.
	Combining the interior bound with the far-field estimate proves \eqref{eq:estsw}.

		To prove \eqref{eq:smallness}, we
		first note  due to Lemma \ref{lem:Linftyestforheat} and the definition of $\eta_{R_0}$,
		 for any $r>2$ one has
		\begin{equation*}
			\sup_{0\leq s\leq T}\| \eta_{R_0}U_0\|_{W^{2,r}(\mathbb{R}^2)} \to 0 \textrm{ as } R_0\to +\infty.
		\end{equation*}
		Using the integral formula for $w$ and the Calder\'{o}n–Zygmund estimates, we have for any $r>2$ that
		\begin{equation*}
					\sup_{0\leq s\leq T}\|w\|_{W^{2,r}(\mathbb{R}^2)}\leq c(r) 		\sup_{0\leq s\leq T} \| \eta_{R_0}U_0\|_{W^{2,r}(\mathbb{R}^2)}.
		\end{equation*}
		It also follows from Morrey's inequality that for any $r>2$
		\begin{equation*}
			\|w\|_{C^{1,\alpha}(\mathbb{R}^2)}\leq c(r) \|w\|_{W^{2,r}(\mathbb{R}^2)},
		\end{equation*}
		where $\alpha = 1-\frac{2}{r}$.
		Hence, we have
		\begin{equation*}
					\sup_{0\leq s\leq T} \|w\|_{C^{1,\alpha}(\mathbb{R}^2)}\leq 	c(r)		\sup_{0\leq s\leq T} \| \eta_{R_0}U_0\|_{W^{2,r}(\mathbb{R}^2)} \to 0 \textrm{ as } R_0\to +\infty.
		\end{equation*}
		Choose $r=3$ in the above, 
		for any $\epsilon>0$, we can then choose $R_0 = R_0(A, \lambda,\epsilon)$ large enough such that
		\begin{equation*}
				\sup_{0\leq s\leq T}	\|w\|_{C^{1,\frac{1}{3}}(\mathbb{R}^2)}\leq C 		\sup_{0\leq s\leq T} \| \eta_{R_0}U_0\|_{W^{2,3}(\mathbb{R}^2)}\leq  \frac{\epsilon}{2},
		\end{equation*}
	and also
		\begin{equation*}
					\sup_{0\leq s\leq T} \|\eta_{R_0}	 U_0\|_{C^{1}(\mathbb{R}^2)} \leq\frac{\epsilon}{2}.
		\end{equation*}
		Combining the above two inequalities, we obtain  \eqref{eq:smallness}. 
		This finishes the  proof of the lemma.
	\end{proof}

We next prove Lemma \ref{lem:weighted-profile-Stokes}.
\begin{proof}[Proof of Lemma \ref{lem:weighted-profile-Stokes}]
	Let $\mathcal O(x,t)$ be the kernel of
	$e^{t\Delta}\mathbb P\nabla\cdot$. In two dimensions, one has
	\begin{equation*}
		|\mathcal O(x,t)|
		\leq C(|x|+\sqrt t)^{-3}\quad \text{and}\quad 
		\mathcal O(x,t) = t^{-\frac32} \mathcal{O}\left(\frac{x}{\sqrt{t}}, 1\right).
	\end{equation*}
	Set
	\[
	q=\frac{30}{23}.
	\]
	For $0<a\leq1$, one has
	\begin{equation*}
		\begin{aligned}
			&\left\|
			(1+|x|^2)^{\beta/2}\mathcal O(x,a)
			\right\|_{L^q(\mathbb R^2)} = \left\|
			(1+|x|^2)^{\beta/2}a^{-\frac{3}{2}}\mathcal O(x/\sqrt{a},1)
			\right\|_{L^q(\mathbb R^2)}  \\
			\leq	&
			Ca^{-\frac{3}{2}+1/q}
			\left\|
			 (1+|z|^2)^{\beta/2}\mathcal O(z,1)
			\right\|_{L^q(\mathbb R^2)}\leq C(\beta)a^{-\frac{11}{15}},
		\end{aligned}
	\end{equation*}
	where we have used  that the   norm in the second to last  inequality is finite
	for $0\leq\beta<1$.
	
	For $0<\tau<1$, we let
	\[
	G_\tau(y,s)
	=
	\tau^{-1}F\left(\frac y{\sqrt \tau},s+ \log  \tau\right).
	\]
	Using
	\[
	(1+|y|^2)^{\beta/2}
	\leq
	C_\beta
	(1+|y-z|^2)^{\beta/2} (1+|z|^2)^{\beta/2}
	\]
	and Young's convolution inequality, we obtain
	\begin{equation}
		\label{eq:weighted-spatial-convolution-LS}
		\begin{aligned}
			&\left\|
			(1+|y|^2)^{\beta/2}
			e^{(1-\tau)\Delta}\mathbb P\nabla\cdot G_\tau(\cdot,s)
			\right\|_{L^6(\mathbb R^2)}\\
			\leq&
			C\left\|		(1+|\cdot|^2)^{\beta/2} \mathcal{O}(\cdot,1-\tau)\right\|_{L^{\frac{30}{23}}(\mathbb{R}^2)}
			\left\|
			(1+|\cdot|^2)^{\beta/2}G_\tau(\cdot,s)
			\right\|_{L^{\frac{5}{2}}(\mathbb R^2)}\\
			\leq&
			C(1-\tau)^{-\frac{11}{15}}
			\left\|
			(1+|\cdot|^2)^{\beta/2}G_\tau(\cdot,s)
			\right\|_{L^{\frac{5}{2}}(\mathbb R^2)}\\
			\leq &		C(1-\tau)^{-\frac{11}{15}} \tau^{-\frac{3}{5}}
			\left\|
			 (1+|z|^2)^{\beta/2}F(z,s+ \log  \tau)
			\right\|_{L^{\frac{5}{2}}(\mathbb R^2)}.
		\end{aligned}
	\end{equation}
	Set
	\[
	f(s)
	=
	\left\|
	(1+|y|^2)^{\beta/2}F(\cdot,s)
	\right\|_{L^{\frac{5}{2}}(\mathbb R^2)}.
	\]
	It follows from \eqref{eq:ZF-definition} and
	\eqref{eq:weighted-spatial-convolution-LS},
	one has
	\begin{equation}
		\label{eq:time-convolution-before-substitution}
		\begin{aligned}
			\left\|
			(1+|y|^2)^{\beta/2}Z_F(\cdot,s)
			\right\|_{L^6(\mathbb R^2)}
			&\leq
			C\int_0^1
			(1-\tau)^{-\frac{11}{15}}\tau^{-\frac{3}{5}}f( s +  \log  \tau)\,d\tau\\
			&=C\int_0^\infty
			(1-e^{-\theta})^{-\frac{11}{15}}e^{-2\theta/5} f(s-\theta)\,d\theta\\
		\end{aligned}
	\end{equation}
	Define
	\begin{equation*}
		h(\theta)
		:=
		(1-e^{-\theta})^{-\frac{11}{15}}
		e^{-\frac{2\theta}{5}},
		\quad \theta>0.
	\end{equation*}
	It is easy to see
	\(
	h\in L^{30/23}(0,\infty)
	\).
	To use the periodicity of \(f\), one writes
	\begin{align*}
		\int_0^\infty h(\theta)f(s-\theta)\,d\theta
		&=
		\sum_{k=0}^\infty
		\int_{kT}^{(k+1)T}
		h(\theta)f(s-\theta)\,d\theta                                    \\
		&=
		\sum_{k=0}^\infty
		\int_0^T
		h(\theta +kT)f(s-\theta)\,d\theta=	\int_0^T h_T(\theta)f(s-\theta)\,d\theta,
	\end{align*}
	where
	\begin{equation*}
		h_T(\theta)
		:=
		\sum_{k=0}^\infty h(\theta+kT),
		\quad 0\leq \theta<T.
	\end{equation*}
	We then have
	\begin{equation}\label{eq:time-convolution-before-substitution1}
		\left\|
		(1+|y|^2)^{\frac{\beta}{2}}
		Z_F(\cdot,s)
		\right\|_{L^6(\mathbb R^2)}
		\leq
		C\int_0^T h_T(\theta)f(s-\theta)\,d\theta .
	\end{equation}
	We claim that
	\(
	h_T\in L^{30/23}(0,T)
	\).
	Indeed,  we can write
	\[
	h_T(\tau)
	=
	h(\tau)
	+
	\sum_{k=1}^{\infty}h(\tau+kT).
	\]
	For \(k\geq1\) and \(0<\tau<T\), one has
	\[
	1-e^{-(\tau+kT)}
	\geq
	1-e^{-T}.
	\]
	Consequently,
	\begin{align*}
		\sum_{k=1}^{\infty}h(\tau+kT)
		&\leq
		(1-e^{-T})^{-\frac{11}{15}}
		\sum_{k=1}^{\infty}
		e^{-\frac25(\tau+kT)}\\
		&=
		(1-e^{-T})^{-\frac{11}{15}}
		e^{-\frac{2\tau}{5}}
		\frac{e^{-\frac{2T}{5}}}
		{1-e^{-\frac{2T}{5}}}
		\leq C(T).
	\end{align*}
	Hence, one has
	\(
	h_T(\tau)\leq h(\tau)+C(T)
	\), and it is obviously that $h(\cdot)\in L^{30/23}(0,T)$. This proves the claim.
	We now apply Young's convolution inequality to \eqref{eq:time-convolution-before-substitution1} on the  torus
	\(\mathbb{T}_{T}\) to obtain
	\begin{equation*}
		\left\|
		(1+|y|^2)^{\frac{\beta}{2}}Z_F
		\right\|_{L^6(\mathbb R^2\times(0,T))}
		\leq
		C
		\|h_T\|_{L^{30/23}(0,T)}
		\|f\|_{L^{\frac{5}{2}}(0,T)}                                                
		=
		C
		\left\|
		(1+|y|^2)^{\frac{\beta}{2}}F
		\right\|_{L^{\frac{5}{2}}(\mathbb R^2\times(0,T))}.
	\end{equation*}
	This proves the desired weighted Stokes estimate
	\eqref{eq:weighted-profile-Stokes-estimate}.
	
	We next prove \eqref{eq:local-profile-Stokes-estimate}. On
	$B_{2R}\times(0,T)$, equation \eqref{eq:forced-linear-profile} may be
	rewritten as
	\begin{equation*}
		\begin{aligned}
			\partial_sZ_F-\Delta Z_F+\nabla P_F
			=
			\nabla\cdot\left(-F+\frac12Z_F  \otimes   y\right)
			-\frac12Z_F.
		\end{aligned}
	\end{equation*}
	By the local \(L^p\)-regularity theory for the nonstationary Stokes system in divergence form, together with the standard weak \(L^p\)-estimate for the time derivative, we obtain
	\begin{align*}
		&\|Z_F\|_{L^{\frac{5}{2}}(0,T;W^{1,\frac{5}{2}}(B_R))}
		+
		\|\partial_sZ_F\|_{L^{\frac{5}{2}}(0,T;W^{-1,\frac{5}{2}}(B_R))}
		\leq
		C_R\left(
		\|Z_F\|_{L^{\frac{5}{2}}(B_{2R}\times(0,T))}
		+
		\|F\|_{L^{\frac{5}{2}}(B_{2R}\times(0,T))}
		\right).
	\end{align*}
	This, together with
	\eqref{eq:weighted-profile-Stokes-estimate} and the H\"older's inequality, proves
	\eqref{eq:local-profile-Stokes-estimate}.
\end{proof}

\medskip 

{\bf Acknowledgement.}
		The research of Gui is supported by  NSFC Key Program (Grant No. 12531010),  University of Macau research grants CPG2024-00016-FST, CPG2025-00032-FST, CPG2026-00027-FST, SRG2023-00011-FST, MYRG-GRG2023-00139-FST-UMDF, UMDF Professorial Fellowship of Mathematics, Macao SAR FDCT 0003/2023/RIA1 and  Macao SAR FDCT 0024/2023/RIB1.   The research of  Xie is partially supported by  NSFC grants 12571238 and 12426203.
        
\bibliographystyle{abbrv}

\end{document}